\def\setliststart#1{\setcounter{\@listctr}{#1}%
  \addtocounter{\@listctr}{-1}}
 \newtheorem{The}{Theorem}[section]
 \newtheorem{Cor}[The]{Corollary}
 \newtheorem{Lem}[The]{Lemma}
 \newtheorem{Pro}[The]{Proposition}
 \theoremstyle{definition}
 \newtheorem{defn}[The]{Definition}
 \newtheorem{Rem}[The]{Remark}
 \numberwithin{equation}{section}
\newcommand{\R}{\mathbb{R}}
\newcommand{\N}{\mathbb{N}}
\newcommand{\Q}{\mathbb{Q}}
\newcommand{\supp}{\mathrm{supp}\,}
\newcommand{\Lip}{\mathrm{Lip\,}}
\newcommand{\law}{\mathrm{law\,}}
\title{Optimal transport in the frame of abstract Lax-Oleinik operator revisited}
\address{Department of Mathematics, Nanjing University, Nanjing 210093, China}
\email{chengwei@nju.edu.cn}
\address{School of Mathematical Sciences, Shanghai Jiao Tong University, Shanghai 200240, China}
\email{hjh9413@sjtu.edu.cn}
\address{Department of Mathematics, Nanjing University, Nanjing 210093, China}
\email{tqshi.math@gmail.com}
\author{Wei Cheng, Jiahui Hong \and Tianqi Shi}
\begin{document}
\maketitle

\begin{abstract}
This is our first paper on the extension of our recent work on the Lax-Oleinik commutators and its applications to the intrinsic approach of propagation of singularities of the viscosity solutions of Hamilton-Jacobi equations. We reformulate Kantorovich-Rubinstein duality theorem in the theory of optimal transport in terms of abstract Lax-Oleinik operators, and analyze the relevant optimal transport problem in the case the cost function $c(x,y)=h(t_1,t_2,x,y)$ is the fundamental solution of Hamilton-Jacobi equation. For further applications to the problem of cut locus and propagation of singularities in optimal transport, we introduce corresponding random Lax-Oleinik operators. We also study the problem of singularities for $c$-concave functions and its dynamical implication when $c$ is the fundamental solution with $t_2-t_1\ll1$ and $t_2-t_1<\infty$, and $c$ is the Peierls' barrier respectively. 
\end{abstract}

\section{Introduction}

This is the first paper of further extension of our recent work on the Lax-Oleinik commutators and its applications to the problem of propagation of singularities of the viscosity solutions of Hamilton-Jacobi equations \cite{Cannarsa_Cheng_Hong2023}. The relation of optimal transport and weak KAM theory was first found by Bernard and Buffoni in the papers \cite{Bernard_Buffoni2006,Bernard_Buffoni2007a,Bernard_Buffoni2007b}. The readers can refer to \cite{Villani_book2009} for a systematic treatment of the optimal transport problem for the cost of Lagrangian action. For the squared distance cost function and the associated Hopf-Lax semigroup, see, for instance \cite{Ambrosio_GigliNicola_Savare_book2008,Ambrosio_Brue_Semola_book2021}. The purpose of this paper is to revisit the optimal transport problem for Lagrangian action. It is motivated by our recent work on Lax-Oleinik commutators and its applications to the problem of cut locus and singularities in optimal transport which is not essentially well understood before. The applications of the main setting and results to this topic will be in our current papers in preparation.

\subsection{Singularities of $c$-concave functions}

Let $X$ and $Y$ be two Polish spaces and $c:X\times Y\to\R$. The function $c$ is called \emph{cost function}. We consider an abstract setting of Lax-Oleinik operators. For any $\phi:X\to\R$ and $\psi:Y\to\R$ we define the \emph{abstract Lax-Oleinik operators}
\begin{align*}
	T^-\phi(y):=\inf_{x\in X}\{\phi(x)+c(x,y)\},\qquad T^+\psi(x):=\sup_{y\in Y}\{\psi(y)-c(x,y)\},\qquad x\in X, y\in Y. 
\end{align*}

Recall that a function $\psi:Y\to[-\infty,+\infty)$ is said to be \emph{$c$-concave} if it is the infimum of a family of functions $c(x,\cdot)+\alpha(x)$. Analogously, $\phi:X\to(-\infty,+\infty]$ is said to be \emph{$c$-convex} if it is the supreme of a family of functions $\beta(y)-c(\cdot,y)$. Given $\psi:Y\to[-\infty,+\infty)$ and $\psi(y)>-\infty$ we define the $c$-superdifferential as
\begin{align*}
	\partial^c\psi(y):=\{x\in X: \psi(\cdot)-c(x,\cdot)\ \text{attains maximum at}\ y\}.
\end{align*}
For any $c$-concave function $\psi:Y\to\R$, $y\in Y$ is called a \emph{singular point} of $\psi$ if $\partial^c\psi(y)$ is not a singleton. For any $c$-concave function $\psi:Y\to\R$, we denote by $\text{Sing}^c(\psi)$ the set of all singular points of $\psi$.

By using this setting we first obtain the characterization of $c$-concavity via Lax-Oleinik commutators: the following statements are equivalent (see Theorem \ref{thm:commutator}).
\begin{enumerate}[\rm (1)]
	\item $T^-\circ T^+\psi=\psi$.
	\item $\partial^c\psi(y)\not=\varnothing$ for all $y\in Y$, where $\partial^c\psi(y):=\{x\in X: \psi(\cdot)-c(x,\cdot)\ \text{attains maximum at}\ y\}$.
	\item There exists $\phi:X\to\R$ such that $\psi=T^-\phi$.
	\item $\psi$ is $c$-concave.
\end{enumerate}	

The problem of singularities of semiconcave functions plays an important role in calculus of variation and optimal control, PDE, Hamiltonian dynamical systems and geometry. For convex Lagrangian on Euclidean space or finite dimensional manifold, the maximal regularity of the associated value function is semiconcavity. More precisely, for any Tonelli Lagrangian $L:\R\times TM\to\R$ with $M$ an $d$-dimensional smooth manifold and a function $\phi:M\to\R$, let
\begin{align*}
	u(t,x):=\inf_{\gamma}\left\{\phi(\gamma(0))+\int^t_0L(s,\gamma(s),\dot{\gamma}(s))\ ds\right\},\qquad t>0, x\in M,
\end{align*}
where $\gamma:[0,t]\to M$ is taken over the set of absolutely continuous curves such that $\gamma(t)=x$. Then the function $u$ is locally semiconcave on $(0,+\infty)\times M$. We say a point $(t,x)\in(0,+\infty)\times M$ a singular point of $u$ if $u$ is not differentiable at $(t,x)$. We denote by $\text{Sing}\,(u)$ the set of all singular points of $u$. If $u$ is only a locally semiconcave function on $M$, we also denote the singular set of $u$ by $\text{Sing}\,(u)$.  

There is a large amount of literatures on the structure of $\text{Sing}\,(u)$ from topological, measure theoretic and even to dynamical aspects. For a survey the readers can refer to \cite{Cannarsa_Cheng2021a}. The rectifiability result of $\text{Sing}\,(u)$ plays an important role in the solvability issue of Monge problem in optimal transport problem, for cost function which is the squared distance on Euclidean space or Riemannian manifold. However, for general cost functions, the relation between $\text{Sing}^c(\psi)$ and the solution of relevant Monge problem is much more complicated. In \cite{Balogh_Penso2018}, the authors discuss this problem under rather general conditions. However, invoking our recent work on the Lax-Oleinik commutators with applications to the singularities in the context of weak KAM theory, we realize it is useful to understand $\text{Sing}^c(\psi)$ and their dynamical nature.

In the context of weak KAM theory, we suppose $X=Y=M$ with $M$ a compact and connected manifold without boundary. Given any $x,y\in M$, and $t_1<t_2$, we denote by $\Gamma_{x,y}^{t_1,t_2}$ the set of absolutely continuous curves $\gamma\in \text{AC}([t_1,t_2],M)$ with $\gamma(t_1)=x$ and $\gamma(t_2)=y$. Recall that 
\begin{align*}
	h(t_1,t_2,x,y):=\inf_{\gamma\in\Gamma_{x,y}^{t_1,t_2}}\int_{t_1}^{t_2}L(s,\gamma(s),\dot\gamma(s))\,ds,\qquad t_1<t_2,\quad x,y\in M,
\end{align*}
and $h(x,y)=\liminf_{t_2-t_1\to\infty}h(t_1,t_2,x,y)$ is the Peierls' barrier with a time-independent Lagrangian $L$ and the Ma\~n\'e's critical value is $0$.

To study the singularities of $c$-concave functions, we will consider three types of cost functions in different cases: $c(x,y)=h(t_1,t_2,x,y)$ with $t_2-t_1\ll1$, $c(x,y)=h(t_1,t_2,x,y)$ with finite $t_1<t_2$ and $c(x,y)=h(x,y)$. 

A key observation for the first case is due to a theorem by Marie-Claude Arnaud (\cite{Arnaud2011}) on the evolution of the 1-graph of semiconcave functions under Hamiltonian flow for short time. In this case, we conclude $\text{\rm Sing}^c(\psi)=\text{\rm Sing}\,(\psi)$ (Proposition \ref{pro:coincide}). In the case with finite $t_1<t_2$, one can only have the inclusion $\text{Sing}^c(\psi)\subset\text{Sing}\,(\psi)$ (Proposition \ref{pro:long_time}). For the case $c(x,y)=h(x,y)$ and $u^-$ is a weak KAM solution, we prove for any $y\in M$, there exists a static class $\mathcal{A}_y\subset\partial^cu^-(y)$ (Proposition \ref{pro:static_class}). Recall that the static class is the equivalence class determined by the pseudo-distance $d(x,y)=h(x,y)+h(y,x)$ on the Aubry set $\mathcal{A}$ (\cite{Mather1993,Contreras_Paternain2002}). As a consequence, if each static class is not a singleton, then $\mbox{\rm Sing}^c(u^-)=M$. 

\subsection{An alternative formulation of Kantorovich-Rubinstein duality}


Let $\phi:X\to\R$ and $\psi:Y\to\R$. Set $c_\phi(x,y):=\phi(x)+c(x,y)$ and $c^{\psi}(x,y):=\psi(y)-c(x,y)$. For any $\mu\in\mathscr{P}(X)$ and $\nu\in\mathscr{P}(Y)$, we try to find a function $\phi:X\to\R$ and a function $\psi:Y\to\R$ such that
\begin{align}
	\int_YT^-\phi\ d\nu=&\,\inf_{\pi\in\Gamma(\mu,\nu)}\int_{X\times Y}c_\phi(x,y)\ d\pi,\label{eq:K-}\tag{K$^-$}\\
	\int_XT^+\psi\ d\mu=&\,\sup_{\pi\in\Gamma(\mu,\nu)}\int_{X\times Y}c^\psi(x,y)\ d\pi.\label{eq:K+}\tag{K$^+$}
\end{align}
In terms on abstract Lax-Oleinik operators, we also proved the equivalence of the problems \eqref{eq:K-} and \eqref{eq:K+} and well known Rubinstein-Kantorovich duality (see Theorem \ref{thm:K+-}). More precisely, 
\begin{enumerate}[--]
	\item if $\phi:X\to\R$ is a solution of \eqref{eq:K-}, then $(\phi,T^-\phi)$ is a solution of Kantorovich problem. Conversely, if $(\phi,\psi)\in I_c$ is a solution of Kantorovich problem, then $\phi$ solves \eqref{eq:K-};
	\item if $\psi:Y\to\R$ is a solution of \eqref{eq:K+}, then $(T^+\psi,\psi)$ is a solution of Kantorovich problem. Conversely, if $(\phi,\psi)\in I_c$ is a solution of Kantorovich problem, then $\psi$ solves \eqref{eq:K+},
\end{enumerate}
where
\begin{align*}
	I_c:=\{(\phi,\psi): \psi(y)-\phi(x)\leqslant c(x,y)\ \text{for all}\ x\in X, y\in Y\}.
\end{align*}
Invoking the equivalence above between problem \eqref{eq:K-} (or \eqref{eq:K+}) and the Rubinstein-Kantorovich duality, it is useful to refine the analysis of the dynamical nature of the associated optimal transport problem initiated by Bernard and Buffoni (\cite{Bernard_Buffoni2006,Bernard_Buffoni2007a,Bernard_Buffoni2007b}). 

To study the singularities in the optimal transport, we have to introduce some ``Random'' Lax-Oleinik operators (Definition \ref{defn:ROL}). However, if $X=Y=M$, a connected closed manifold,  $c$ is the fundamental solution of the associated Hamilton-Jacobi equation and the terminal marginal measure $\nu$ is Dirac. The usual deterministic optimal transport problem can be understood well in terms of the associated Hamiltonian flow. Our main result (Proposition \ref{pro:exist}) in this case is: let $y_0\in M$, $\nu=\delta_{y_0}$, $\phi\in\text{Lip}_b(M)$ and $\{\Phi_H^{t_1,t_2}\}_{t_1,t_2\in\R}$ is the associated Hamiltonian flow from $t_1$ to $t_2$. 
\begin{enumerate}[\rm (1)]
	\item For any $\rho\in\mathscr{P}(\R^d)$ with $\supp(\rho)\subset D^*T^-\phi(y_0)$\footnote{For a semiconcave function $u$, $D^*u(x)$ is the set of reachable gradients.}, there exists $\mu_\rho:=(p_x\circ\Phi_{H}^{t,0})_\#(\delta_{y_0}\times\rho)$, which makes \eqref{eq:K-} holds true;
	\item For each $\mu\in\mathscr P(M)$ which satisfies \eqref{eq:K-}, there exists $\rho_\mu\in\mathscr P(\R^d)$ with $\supp(\rho_\mu)\subset D^+T^-\phi(y_0)$, such that $\mu=(p_x\circ\Phi_H^{t,0})_{\#}(\delta_{y_0}\times\rho_\mu)$.
\end{enumerate}
In the case that $\nu$ is a general probability measure, we also have the following result. Let $\nu\in\mathscr{P}(M)$ and $\phi\in\text{Lip}\,(M)$. Then, there exists some $\mu\in\mathscr{P}(M)$ solving \eqref{eq:K-} and satisfying
\begin{align*}
	\supp(\mu)\subset\mathop{\lim\inf}_{k\to\infty}\left\{x\in M: \partial_c\phi(x)\cap C_k\neq\varnothing\right\},
\end{align*}
for some $a_k$-net $C_k$  of $\supp\,(\nu)$, where $a_k\searrow 0$ as $k\to\infty$.

\subsection{Random Lax-Oleinik operators}

The introduction of random Lax-Oleinik operators is a key step for the problem of cut locus and propagation of singularities in optimal transport. We use the word ``random'' to clarify the essential difference of the stochastic setting of this problem, where sample paths of the relevant processes is not absolutely continuos.

For a metric space $(X,d)$, let $(\mathscr{P}_p(X),W_p)$ be the $p$-Wasserstein space for $p\in[1,+\infty)$, where $W_p$ is the $p$-order Wasserstein metric. Let $s,t\in\R$ with $s<t$, $\mu\in\mathscr{P}(X)$ and $\nu\in\mathscr{P}(Y)$. We follow the setting of \cite{Villani_book2009}. The dynamical cost functional associated to $c^{s,t}$ is 
\begin{align*}
	C^{s,t}(\mu,\nu):=\inf_{\pi\in\Gamma(\mu,\nu)}\int_{X\times Y}c^{s,t}(x,y)\,d\pi=\inf_{\substack{\mathrm{law}(x)=\mu\\\mathrm{law}(y)=\nu}}\mathbb E(c^{s,t}(x,y)),
\end{align*}
where $x(\omega)$ and $y(\omega)$ in the last term are $X$-valued and $Y$-valued random variables of some probability space $(\Omega,\mathscr F,\mathbb P)$ respectively. 
	
Given $\phi\in L^0(X;(-\infty,+\infty])$ which is bounded from below, associated with $\phi$ we define a functional of $\mathscr P_p(X)$, called \textit{the potential energy associated to $\phi$} (see \cite[Definition 4.30]{Ambrosio_Gigli2013} or \cite[Example 9.3.1]{Ambrosio_GigliNicola_Savare_book2008}),
\begin{align*}
	\phi(\mu):=\int_X\phi\,d\mu,\qquad\mu\in \mathscr P_p(X).
\end{align*}

Let $X=Y=M$ and $c^{s,t}(x,y):=h(s,t,x,y)$, the fundamental solution from weak KAM theory. 
For any function $\phi:\R^d\to [-\infty,+\infty]$, $t>0$, $\mu\in\mathscr P_1(\R^d)$, we define
\begin{align*}
	P_t^-\phi(\mu)&:=\inf_{\nu\in\mathscr P_1(\R^d)}\{\phi(\nu)+C^{0,t}(\nu,\mu)\},\\
		P_t^+\phi(\mu)&:=\sup_{\nu\in\mathscr P_1(\R^d)}\{\phi(\nu)-C^{0,t}(\mu,\nu)\}.
\end{align*}
We call $P_t^{\pm}\phi$ the positive and negative \emph{random Lax-Oleinik operator}. 

Our main result is: suppose $\phi$ is a uniformly continuous function on $M$, $t>0$. Then, for any $\nu\in\mathscr P_1(M)$ the operator $P_t^-\phi(\nu)$ is finite-valued, there exist $\mu\in\mathscr P_1(M)$ and $\xi\in L_{\mu,\nu}^{0,t}$ such that
\begin{align*}
	P_t^-\phi(\nu)&=\phi(\mu)+C^{0,t}(\mu,\nu)\\
		&=\int_\Omega\phi(\xi(0,\omega))+\int_0^tL(\xi(s,\omega),\dot\xi(s,\omega))\,dsd\mathbb P\\
		&=\int_0^t\int_{T\R^d}\phi(x)+L(x,v)\,d\tilde\mu_sds,
\end{align*}
where $\tilde\mu_s:=\law(\xi(s,\cdot),\dot\xi(s,\cdot))$ is determined by the associated Euler-Lagrangian flow. Moreover, $P_t^-\phi(\nu)=T_t^-\phi(\nu)$ for any $\nu\in\mathscr P_1(M)$, i.e., for any $\nu\in\mathscr P_1(M)$, there exists $\mu\in\mathscr P_1(M)$ solving \eqref{eq:K-}. 

We remark finally if the Lagrangian $L$ is a Riemannian metric, the associated ``Random'' Hopf-Lax semigroup has already been widely used in the community of optimal transport (see, for instance, \cite{Ambrosio_Brue_Semola_book2021}). However, we emphasize the abstract and random Lax-Oleinik operators for general Lagrangian is useful for our program to deal with the problem of propagation of singularities based on our intrinsic approach of this problem in the context of calculus of variation (\cite{Cannarsa_Cheng_Fathi2017,Cannarsa_Cheng3,Cannarsa_Cheng_Fathi2021}).

The paper is organized as follows. In section 2, we introduce some necessary facts from weak KAM theory, calculus of variation and optimal transport. In Section 3, we define the abstract negative and positive Lax-Oleinik operators $T^{\pm}$ and characterize the conditions $T^-\circ T^+=Id$ and $T^+\circ T^-=Id$. We also discuss the relations between the singular set of semiconcave functions and $c$-concave function in the context of weak KAM theory when $c$ is the fundamental solutions and Peierls' barrier. Section 4 is composed of three parts. We obtain an equivalent form of the Kantorovich-Rubinstein duality using the Lax-Oleinik operators $T^{\pm}$. We also discuss some basic problems of optimal transport in the frame of Lax-Oleinik operators $T^{\pm}$. In the last subsection, we introduce the random Lax-Oleinik operators and discuss some fundamental properties of these operators in frame of abstract Lax-Oleinik operators. We give the proof of a possible known lemma in the appendix.  

\medskip

\noindent\textbf{Acknowledgements.} Wei Cheng is partly supported by National Natural Science Foundation of China (Grant No. 12231010). Jiahui Hong is partly supported by Super Postdoctoral Incentive Plan of Shanghai ``Singularity problems of Hamilton-Jacobi equations''. 

\section{Preliminaries}

\subsection{Semiconcave functions}

We first recall some basic relevant notions on semiconcavity.
\begin{enumerate}[--]
	\item Let $\Omega$ be an open convex subset of $\R^d$. A function $\phi:\Omega\to\R$ is called a semiconcave function (of linear modulus) with constant $C\geqslant0$ if for any $x\in\Omega$ there exists $p\in\R^d$ such that
	\begin{equation}\label{eq:semiconcave}
	    \phi(y)\leqslant\phi(x)+\langle p,y-x\rangle+\frac C2|x-y|^2,\qquad \forall y\in\R^d.
	\end{equation}
	\item The set of covectors $p$ satisfying \eqref{eq:semiconcave} is called the proximal superdifferential of $\phi$ at $x$ and we denote it by $D^+\phi(x)$.
	\item Similarly, $\phi$ is semiconvex if $-\phi$ is semiconcave. The set of $D^-\phi(x)=-D^+(-\phi)(x)$ is called the proximal subdifferential of $\phi$ at $x$. 
	\item The set $D^+\phi(x)$ is a singleton if and only if $\phi$ is differentiable at $x$, and $D^+\phi(x)=\{D\phi(x)\}$. A point $x$ is called a singular point of a semiconcave function $\phi$ if $D^+\phi(x)$ is not a singleton. We denote by $\text{Sing}\,(\phi)$ the set of all singular points of $\phi$.
	\item We call $p\in D^*\phi(x)$, the set of reachable gradients, if there exists a sequence $x_k\to x$ as $k\to\infty$, $\phi$ is differentiable at each $x_k$ and $p=\lim_{k\to\infty}D\phi(x_k)$. We have $D^*\phi(x)\subset D^+\phi(x)$ and $D^+\phi(x)=\text{co}\,D^*\phi(x)$. 
\end{enumerate}
For more on the semiconcave functions, the readers can refer to \cite{Cannarsa_Sinestrari_book}.

\begin{Pro}\label{pro:inf}
A function $\phi:\Omega\to\R$ is a semiconcave function with constant $C\geqslant0$ if and only if there exists a family of $C^2$-functions $\{\phi_i\}$ with $D^2\phi_i\leqslant CI$ such that
\begin{align*}
	\phi=\inf_i\phi_i.
\end{align*}
\end{Pro}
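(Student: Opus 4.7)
The plan is to reduce semiconcavity to plain concavity by the standard quadratic correction trick, and then invoke the classical representation of concave functions as infima of affine functions.

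First I would prove the easy direction ($\Leftarrow$). Given $\phi=\inf_i\phi_i$ with $D^2\phi_i\leqslant CI$, consider $\tilde\phi_i(x):=\phi_i(x)-\tfrac{C}{2}|x|^2$. Each $\tilde\phi_i$ has $D^2\tilde\phi_i\leqslant 0$, so it is concave on $\Omega$. Since the pointwise infimum of concave functions is concave, $\tilde\phi(x):=\phi(x)-\tfrac{C}{2}|x|^2=\inf_i\tilde\phi_i(x)$ is concave. Hence at every $x\in\Omega$ there exists $q\in\partial\tilde\phi(x)$ (concave superdifferential), which, by unwinding the quadratic shift, yields $p:=q+Cx$ satisfying \eqref{eq:semiconcave}. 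So $\phi$ is semiconcave with constant $C$.

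For the harder direction ($\Rightarrow$), assume $\phi$ is semiconcave with constant $C$. Applying \eqref{eq:semiconcave} and expanding $|y-x|^2-|y|^2+|x|^2=-2\langle x, y-x\rangle$, one sees that $\tilde\phi(x):=\phi(x)-\tfrac{C}{2}|x|^2$ satisfies, at each $x\in\Omega$, an affine upper bound $\tilde\phi(y)\leqslant\tilde\phi(x)+\langle q_x,y-x\rangle$ for all $y\in\Omega$, with $q_x=p-Cx$. This is precisely the statement that $\tilde\phi$ admits a supporting affine function at every point, which characterizes concavity of $\tilde\phi$ on the open convex set $\Omega$. I would then invoke the classical fact that a (continuous) concave function on an open convex subset of $\R^d$ equals the infimum of its supporting affine functions: there is a family of affine $L_i(x)=a_i+\langle b_i,x\rangle$ with $L_i\geqslant\tilde\phi$ on $\Omega$ and $\tilde\phi=\inf_i L_i$. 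Setting $\phi_i(x):=L_i(x)+\tfrac{C}{2}|x|^2$ gives $C^\infty$ functions with $D^2\phi_i=CI\leqslant CI$, and $\inf_i\phi_i=\tilde\phi+\tfrac{C}{2}|\cdot|^2=\phi$.

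The only step requiring care is the ``concave $=$ infimum of supporting affines'' representation. I would justify it by noting that any concave function on an open convex set is locally Lipschitz, hence continuous, and its superdifferential is nonempty at every interior point (by Hahn-Banach separation applied to the hypograph, or directly by the one-sided directional derivative construction). This is where the openness of $\Omega$ is essential: on the boundary the superdifferential may be empty, but in the statement of the proposition $\phi$ is defined on an open convex set. No dynamical or weak KAM input is needed; the entire argument is convex-analytic.
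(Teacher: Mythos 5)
The paper itself gives no proof of Proposition~\ref{pro:inf}; it is stated as a known characterization from \cite{Cannarsa_Sinestrari_book}, and the construction behind it (touching paraboloids from above) is only implicitly invoked later, in the appendix proof of Lemma~\ref{lem:D^+}. Your proof is correct and, after unwinding, produces exactly that construction: if $L_x(\cdot)=\tilde\phi(x)+\langle q_x,\cdot-x\rangle$ is your supporting affine for $\tilde\phi=\phi-\tfrac{C}{2}|\cdot|^2$ with $q_x=p-Cx$, then $L_x(\cdot)+\tfrac{C}{2}|\cdot|^2$ is precisely the paraboloid $\phi(x)+\langle p,\cdot-x\rangle+\tfrac{C}{2}|\cdot-x|^2$ touching $\phi$ from above at $x$. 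So the quadratic-shift route is a repackaging of the standard argument rather than a genuinely different one.

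One small inefficiency: in the $\Rightarrow$ direction you do not need the general ``concave $\Rightarrow$ nonempty superdifferential on the interior'' machinery (Hahn--Banach separation, local Lipschitzness). The semiconcavity inequality \eqref{eq:semiconcave} already hands you, at every $x\in\Omega$, the supporting affine $L_x\geqslant\tilde\phi$ with $L_x(x)=\tilde\phi(x)$; the identity $\tilde\phi=\inf_xL_x$ then follows at once from $L_x\geqslant\tilde\phi$ and $L_y(y)=\tilde\phi(y)$, without ever having to assert concavity of $\tilde\phi$ as an intermediate step. Concavity of $\tilde\phi$ is still a useful observation for the converse direction, where you do need to know that a finite concave function on an open convex set has a supergradient at every point, and your reasoning there is sound.
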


Proposition \ref{pro:inf} is an important and useful characterization of semiconcavity. Let $S$ be a compact topological space and $F:S\times\R^d\to\R$ be a continuous function such that
\begin{enumerate}[\rm (a)]
	\item $F(s,\cdot)$ is of class $C^2$ for all $s\in S$ and $\|F(s,\cdot)\|_{C^2}$ is uniformly bounded by some constant $C$,
	\item $D_xF(s,x)$ is continuous on $S\times\R^d$,
	\item $\phi(x)=\inf\{F(s,x): s\in S\}$.
\end{enumerate}
We call such a function $\phi$ a marginal function of a family of $C^2$-functions.

\begin{Pro}\label{pro:marginal}
Let $\phi(x)=\inf\{F(s,x): s\in S\}$ be a marginal functions of the family of $C^2$-functions $\{F(s,\cdot)\}_{s\in S}$. Let $M(x)=\arg\min\{F(s,x): s\in S\}$.
\begin{enumerate}[\rm (1)]
	\item $\phi$ is semiconcave with constant $C$.
	\item $D^*\phi(x)\subset Y(x)=\{D_xF(s,x): s\in M(x)\}$.
	\item For each $x$
	\begin{align*}
	    D^+\phi(x)=
		\begin{cases}
			D_xF(s,x),& M(x)=\{s\}\ \text{is a singleton};\\
			\mbox{\rm co}\,\{D_xF(s,x): s\in M(x)\},&\text{otherwise.}
		\end{cases}
	\end{align*}
\end{enumerate} 	
\end{Pro}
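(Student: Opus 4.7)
The proof breaks into three layers, each building on the previous.

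Part (1) is essentially free. Since $F(s,\cdot)$ is $C^2$ with $\|F(s,\cdot)\|_{C^2}\leqslant C$, we have $D^2_xF(s,\cdot)\leqslant CI$ uniformly in $s$, so the hypothesis of Proposition \ref{pro:inf} is verified with the family $\{F(s,\cdot)\}_{s\in S}$. Thus $\phi=\inf_s F(s,\cdot)$ is semiconcave with constant $C$. Continuity of $F$ and compactness of $S$ also guarantee $M(x)\neq\varnothing$ for every $x$, which we will use repeatedly.

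The key elementary observation for (3) is the following: for any $s\in M(x)$ one has $D_xF(s,x)\in D^+\phi(x)$. To see this I would combine $\phi(y)\leqslant F(s,y)$ with the pointwise Taylor-type bound coming from $D^2_xF(s,\cdot)\leqslant CI$,
\begin{align*}
	\phi(y)\leqslant F(s,y)\leqslant F(s,x)+\langle D_xF(s,x),y-x\rangle+\tfrac{C}{2}|y-x|^2,
\end{align*}
together with $F(s,x)=\phi(x)$ (which is what $s\in M(x)$ means). So $Y(x)\subset D^+\phi(x)$, and by convexity $\mathrm{co}\,Y(x)\subset D^+\phi(x)$.

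For (2) take $p\in D^*\phi(x)$, so there exist $x_k\to x$ with $\phi$ differentiable at each $x_k$ and $D\phi(x_k)\to p$. Pick $s_k\in M(x_k)$ (nonempty by compactness). For each $k$ the function $F(s_k,\cdot)-\phi$ is nonnegative and vanishes at $x_k$, so differentiability of $\phi$ at $x_k$ forces $D_xF(s_k,x_k)=D\phi(x_k)$. By compactness of $S$, pass to a subsequence $s_k\to s^\ast$; the inequality $F(s_k,x_k)\leqslant F(s,x_k)$ together with joint continuity of $F$ gives $s^\ast\in M(x)$, and joint continuity of $D_xF$ yields $D\phi(x_k)=D_xF(s_k,x_k)\to D_xF(s^\ast,x)$. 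Hence $p=D_xF(s^\ast,x)\in Y(x)$. This step—upgrading the minimizers $s_k\in M(x_k)$ to a minimizer at $x$ and simultaneously passing the gradients to the limit—is the real content of the proposition and where the continuity of $D_xF$ is essential; I expect it to be the only non-routine part.

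Finally (3) follows by combining the two inclusions with the standard identity $D^+\phi(x)=\mathrm{co}\,D^*\phi(x)$ valid for semiconcave $\phi$. Indeed, (2) gives $D^*\phi(x)\subset Y(x)$, hence $D^+\phi(x)\subset\mathrm{co}\,Y(x)$, while the observation above gives $\mathrm{co}\,Y(x)\subset D^+\phi(x)$; when $M(x)=\{s\}$ this common value collapses to the single point $D_xF(s,x)$, which also recovers differentiability of $\phi$ at $x$ in that case.
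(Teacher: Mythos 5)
The paper does not actually prove Proposition \ref{pro:marginal}; it is stated as a standard fact, and the appendix proof of the refinement Lemma \ref{lem:D^+} simply reduces to this case and cites Cannarsa--Sinestrari, Theorem 3.4.4. Your proof is a correct, self-contained derivation along exactly the lines of that textbook argument: (1) from Proposition \ref{pro:inf}, (2) by tracking minimizers $s_k\in M(x_k)$ through the Fermat condition at points of differentiability and passing to the limit using compactness of $S$ and joint continuity of $F$ and $D_xF$, (3) by sandwiching between $\mathrm{co}\,Y(x)\subset D^+\phi(x)$ (from the uniform Taylor bound) and $D^+\phi(x)=\mathrm{co}\,D^*\phi(x)\subset\mathrm{co}\,Y(x)$.

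One small remark on rigor: the statement allows $S$ to be an arbitrary compact topological space, and you write ``pass to a subsequence $s_k\to s^*$,'' which requires sequential compactness. In full generality one should either pass to a cluster point of the sequence $(s_k)$ (every sequence in a compact space has one, and the continuity argument still closes the gap), or note that in all applications here $S$ is a compact metric space, so your phrasing is literally valid. This does not affect the correctness of the substance of your argument.
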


\begin{Rem}
If $M$ is a connected and compact smooth manifold without boundary, a function $\phi:M\to\R$ is called semiconcave if there exists a family of $C^2$-functions $\{\phi_i\}$ such that $\phi=\inf_i\phi_i$, and the Hessians of $\phi_i$'s are uniformly bounded above. The readers can refer to the appendix of the paper \cite{Fathi_Figalli2010} for more  discussion of the semiconcavity of functions on (even noncompact) manifold. However, because of the local nature of our main discussion, we will work on Euclidean space instead. 
\end{Rem}

\subsection{Fundamental solutions and Lax-Oleinik representation}

A function $L=L(t,x,v):\R\times TM\to\R$ is called time-dependent \emph{Tonelli Lagrangian}, if it is a function of class $C^2$ and satisfies the following conditions:
\begin{enumerate}[(L1)]
	\item The function $v\mapsto L(t,x,v)$ is strictly convex for all $(t,x)\in\R\times M$.
	\item There exist a superlinear function $\theta:[0,+\infty)\to[0,+\infty)$ and $L^{\infty}_{\rm loc}$ function $c_0:\R\to[0,+\infty]$, such that
	\begin{align*}
		L(t,x,v)\geqslant \theta(|v|_x)-c_0,\qquad \forall (t,x,v)\in\R\times TM.
	\end{align*}
	\item There exist $C_1,C_2>0$, such that
	\begin{align*}
		|L_t(t,x,v)|\leqslant C_1+C_2L(t,x,v),\qquad \forall (t,x,v)\in\R\times TM.
	\end{align*}
\end{enumerate}
We call the Lagrangian $H$ associated with $L$ a \emph{Tonelli Hamiltonian}, defined by
\begin{align*}
	H(t,x,p)=\sup_{v\in T_xM}\{p(v)-L(t,x,v)\},\qquad (t,x,p)\in\R\times T^*M.
\end{align*}

Given any $x,y\in M$, and $t_1<t_2$, we denote by $\Gamma_{x,y}^{t_1,t_2}$ the set of absolutely continuous curves $\gamma\in \text{AC}([t_1,t_2],M)$ with $\gamma(t_1)=x$ and $\gamma(t_2)=y$. 

\begin{defn}
Given a Tonelli Hamiltonian $H$ with $L$ the associated Tonelli Lagrangian. We call the function $h=h(t_1,t_2,x,y):\R\times\R\times M\times M\to\R$ the \emph{fundamental solution} of the associated Hamilton-Jacobi equation, defined by
\begin{align*}
	h(t_1,t_2,x,y):=\inf_{\gamma\in\Gamma_{x,y}^{t_1,t_2}}\int_{t_1}^{t_2}L(s,\gamma(s),\dot\gamma(s))\,ds,\qquad t_1<t_2,\quad x,y\in M.
\end{align*}
\end{defn}

\begin{defn}
Given a Tonelli Hamiltonian $H$ with $L$ the associated Tonelli Lagrangian. For any $\phi:M\to\R$, $x\in M$, and $t_1<t_2$, we define
\begin{align*}
	T_{t_1}^{t_2}\phi(x):=\inf_{y\in M}\{\phi(y)+h(t_1,t_2,y,x)\},\\
	\breve T_{t_1}^{t_2}\phi(x):=\sup_{y\in M}\{\phi(y)-h(t_1,t_2,x,y)\}.
\end{align*}
The families $\{T_{t_1}^{t_2}\}$ and $\{\breve T_{t_1}^{t_2}\}$ of operators are called \emph{negative and positive Lax-Oleinik evolution} respectively. 
\end{defn}

\begin{Rem}
If $\phi\in\text{\rm BUC}\,(M)$, the set of (bounded) uniformly continuous functions on $M$, both $\{T_{t_1}^{t_2}\}$ and $\{\breve T_{t_1}^{t_2}\}$ satisfy Markov property. Moreover, if $L$ is independent of $t$, then both of them are semigroups on $\text{\rm BUC}\,(M)$. We denote $T_t^-:=T_0^t$ and $T_t^+:=\breve T_0^t$ for brevity. 
\end{Rem}

\section{Optimal transport and singularities of $c$-concave function}

Let $X$ and $Y$ be two Polish spaces and $c:X\times Y\to\R$. The function $c$ is called \emph{cost function}. The problem when $c=h(t_1,t_2,\cdot,\cdot)$ for some fixed $t_1<t_2$ has very natural feature. The basic relation among optimal transport, Hamilton-Jacobi equations, Mather theory and weak KAM theory in this context has been understood by many authors (\cite{Bernard_Buffoni2007a,Bernard_Buffoni2007b,Fathi_Figalli2010}). Main part of this section is motivated by our recent work on the Lax-Oleinik commutators and singularities. We begin with the analysis without touching the measure theoretic aspect of the theory at first.


\subsection{Abstract Lax-Oleinik commutators}
  
\begin{defn}\label{abstract lax-oleinik}
For any $\phi:X\to\R$ and $\psi:Y\to\R$ we define the \emph{abstract Lax-Oleinik operators}
\begin{align*}
	T^-\phi(y):=\inf_{x\in X}\{\phi(x)+c(x,y)\},\qquad T^+\psi(x):=\sup_{y\in Y}\{\psi(y)-c(x,y)\},\qquad x\in X, y\in Y. 
\end{align*}
\end{defn}

We always assume the infimum and supremum in the definitions of $T^-\phi(y)$ and $T^+\psi(x)$ can be achieved respectively.

\begin{defn}
A function $\psi:Y\to[-\infty,+\infty)$ is said to be \emph{$c$-concave} if it is the infimum of a family of functions $c(x,\cdot)+\alpha(x)$. Analogously, $\phi:X\to(-\infty,+\infty]$ is said to be \emph{$c$-convex} if it is the supreme of a family of functions $\beta(y)-c(\cdot,y)$.
\end{defn}

It is clear that $T^-\phi$ is $c$-concave and $T^+\psi$ is $c$-convex.

\begin{defn}
Given $\phi:X\to(-\infty,+\infty]$ and $\phi(x)<+\infty$ we define the $c$-subdifferential as
\begin{align*}
	\partial_c\phi(x):=\{y\in Y: \phi(\cdot)+c(\cdot,y)\ \text{attains minimum at}\ x\}.
\end{align*}
Analogously, given $\psi:Y\to[-\infty,+\infty)$ and $\psi(y)>-\infty$ we define the $c$-superdifferential as
\begin{align*}
	\partial^c\psi(y):=\{x\in X: \psi(\cdot)-c(x,\cdot)\ \text{attains maximum at}\ y\}.
\end{align*}
\end{defn}

Now, we will concentrate on the Lax-Oleinik commutators $T^-\circ T^+$ and $T^+\circ T^-$.

\begin{Lem}\label{lem:monotone1}
Let $\phi:X\to\R$ and $\psi:Y\to\R$. 
\begin{enumerate}[\rm (1)]
	\item $T^-\circ T^+\psi\geqslant\psi$ and $T^+\circ T^-\phi\leqslant\phi$. 
	\item $T^-\circ T^+\circ T^-\phi=T^-\phi$ and $T^+\circ T^-\circ T^+\psi=T^+\psi$. In particular, both $T^-\circ T^+$ and $T^-\circ T^+$ are idempotent. 
\end{enumerate}
\end{Lem}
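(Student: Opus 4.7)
The plan is to prove (1) directly from the definitions by exploiting the interplay between the inner sup/inf and the outer inf/sup, and then bootstrap (2) from (1) using the elementary monotonicity of $T^\pm$ (i.e., if $\phi_1\leqslant\phi_2$ pointwise then $T^-\phi_1\leqslant T^-\phi_2$, and similarly for $T^+$).

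For the first inequality in (1), I would unfold
\begin{align*}
    T^-\circ T^+\psi(y)=\inf_{x\in X}\left\{\sup_{y'\in Y}\{\psi(y')-c(x,y')\}+c(x,y)\right\}
\end{align*}
and, for each fixed $x$, test the inner supremum at the particular point $y'=y$; this gives $\sup_{y'}\{\psi(y')-c(x,y')\}\geqslant\psi(y)-c(x,y)$, so the bracketed expression is at least $\psi(y)$, and taking the infimum over $x$ preserves this lower bound. The inequality $T^+\circ T^-\phi\leqslant\phi$ is dual: test the inner infimum at $x'=x$. This is just the standard Galois-connection trick for the pair $T^-,T^+$ induced by the cost $c$.

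For (2), I would first apply part (1) with $\psi$ replaced by $T^-\phi$ to get $T^-\circ T^+(T^-\phi)\geqslant T^-\phi$. For the reverse inequality, the second half of (1) gives $T^+\circ T^-\phi\leqslant\phi$, and applying the order-preserving operator $T^-$ to both sides yields $T^-\circ T^+\circ T^-\phi\leqslant T^-\phi$. Combining the two yields the stated equality. The identity $T^+\circ T^-\circ T^+\psi=T^+\psi$ is entirely symmetric.

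The idempotency claim in (2) is then a short bookkeeping step: writing $S:=T^-\circ T^+$, the identity just proved gives $S\circ S\,\psi=T^-\circ(T^+\circ T^-\circ T^+)\psi=T^-\circ T^+\psi=S\psi$, so $S$ is idempotent; the same argument works with $T^-$ and $T^+$ interchanged. I do not anticipate any genuine obstacle here — the only thing to be careful about is ensuring the monotonicity of $T^\pm$ is invoked explicitly (it follows at once from the monotonicity of $\inf$ and $\sup$ in their integrand) and that no spurious hypothesis on attainment of the extrema is needed for these pointwise inequalities, since they hold for arbitrary $\phi,\psi$ taking values in $\R$.
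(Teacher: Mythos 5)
Your proof is correct and follows essentially the same route as the paper: part (1) via testing the inner extremum at the diagonal, and part (2) via the sandwich $T^-\phi\leqslant T^-\circ T^+\circ T^-\phi\leqslant T^-\phi$. You are actually slightly more explicit than the paper in flagging that the second inequality uses monotonicity of $T^\pm$ and in deriving the idempotency from the three-fold identity, but these are the same ideas written out in more detail rather than a different argument.
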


\begin{proof}
From the definition we have
\begin{align*}
	T^-\circ T^+\psi(y)=&\,\inf_{x\in X}\{\sup_{z\in Y}(\psi(z)-c(x,z))+c(x,y)\}\geqslant\inf_{x\in X}\{\psi(y)-c(x,y)+c(x,y)\}=\psi(y),\\
	T^+\circ T^-\phi(x)=&\,\sup_{y\in Y}\{\inf_{z\in X}(\phi(z)+c(z,y))-c(x,y)\}\leqslant\sup_{y\in Y}\{\phi(x)+c(x,y)-c(x,y)\}=\phi(x),
\end{align*}
and (1) follows. By (1) we observe
\begin{align*}
	T^-\phi\leqslant T^-\circ T^+\circ T^-\phi\leqslant T^-\phi.
\end{align*}
Thus, the inequalities above are equalities and this completes the proof of the first equality in (2). The proof of the rest of (2) is similar.
\end{proof}

\begin{Lem}\label{lem:T^-T^+1}
\hfill
\begin{enumerate}[\rm (1)]
	\item For any $y\in Y$, $T^-\circ T^+\psi(y)=\psi(y)$ if and only if $\partial^c\psi(y)\not=\varnothing$. 
	\item For any $x\in X$, $T^+\circ T^-\phi(x)=\phi(x)$ if and only if $\partial_c\phi(x)\not=\varnothing$. 
\end{enumerate}
\end{Lem}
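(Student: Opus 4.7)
The plan is straightforward: use Lemma \ref{lem:monotone1} to reduce each identity to a one-sided inequality, then unfold the definitions of $T^\pm$ and of the $c$-super/subdifferentials, exploiting the standing assumption that the infima and suprema defining $T^\pm$ are attained.

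For the ``if'' direction of (1), suppose $x_0 \in \partial^c\psi(y)$. By definition, the function $\psi(\cdot) - c(x_0,\cdot)$ attains its maximum at $y$, so
\[
T^+\psi(x_0) = \sup_{z \in Y}\bigl(\psi(z) - c(x_0,z)\bigr) = \psi(y) - c(x_0, y).
\]
Feeding this particular $x_0$ into the infimum defining $T^- \circ T^+\psi(y)$ gives
\[
T^- \circ T^+\psi(y) \leqslant T^+\psi(x_0) + c(x_0,y) = \psi(y),
\]
and combining with the inequality $T^- \circ T^+\psi \geqslant \psi$ from Lemma \ref{lem:monotone1}(1) yields equality at $y$.

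For the ``only if'' direction of (1), assume $T^- \circ T^+\psi(y) = \psi(y)$. By the standing attainment assumption, the infimum defining $T^-(T^+\psi)(y)$ is realized at some $x_0 \in X$, so $T^+\psi(x_0) + c(x_0,y) = \psi(y)$, i.e.\ $T^+\psi(x_0) = \psi(y) - c(x_0,y)$. On the other hand, the definition of $T^+\psi(x_0)$ as a supremum gives $T^+\psi(x_0) \geqslant \psi(y) - c(x_0,y)$ for free, so the two values agree and the supremum defining $T^+\psi(x_0)$ is in fact attained at $z = y$. This is precisely the condition $x_0 \in \partial^c\psi(y)$.

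Part (2) follows by an entirely symmetric argument, exchanging the roles of $X$ and $Y$, using $T^+ \circ T^-\phi \leqslant \phi$ from Lemma \ref{lem:monotone1}(1), and invoking the dual definition of $\partial_c\phi(x)$. There is no real obstacle here beyond careful bookkeeping: the only point requiring any attention is the explicit use of the attainment hypothesis in the forward direction, which is exactly why that assumption is placed immediately after Definition \ref{abstract lax-oleinik}.
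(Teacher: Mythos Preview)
Your proof is correct and follows essentially the same route as the paper's: both directions unwind the definitions, use the attainment hypothesis in the ``only if'' direction to produce a minimizer $x_0$, and invoke Lemma~\ref{lem:monotone1}(1) for the other inequality. If anything, your ``only if'' direction is slightly more explicit than the paper's in spelling out why $T^+\psi(x_0)=\psi(y)-c(x_0,y)$ forces $x_0\in\partial^c\psi(y)$.
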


\begin{proof}
We only prove (1) since the proof of (2) is similar. If $\partial^c\psi(y)\not=\varnothing$, then there exists $x\in X$ such that $T^+\psi(x)=\psi(y)-c(x,y)$. Thus,
\begin{align*}
	\psi(y)=T^+\psi(x)+c(x,y)\geqslant T^-\circ T^+\psi(y),
\end{align*}
and we have $T^-\circ T^+\psi(y)=\psi(y)$ since the converse inequality holds automatically by Lemma \ref{lem:monotone1}. Now, suppose $T^-\circ T^+\psi(y)=\psi(y)$, then there exists $x\in X$ such that $\psi(y)=T^-\circ T^+\psi(y)=T^+\psi(x)+c(x,y)$. This implies $\partial^c\psi(y)\not=\varnothing$.
\end{proof}

\begin{The}\label{thm:commutator}
The following statements are equivalent.
\begin{enumerate}[\rm (1)]
	\item $T^-\circ T^+\psi=\psi$ (resp. $T^+\circ T^-\phi=\phi$).
	\item $\partial^c\psi(y)\not=\varnothing$ (resp. $\partial_c\phi(x)\not=\varnothing$) for all $y\in Y$ (resp. $x\in X$).
	\item There exists $\phi:X\to\R$ (resp. $\psi:Y\to\R$) such that $\psi=T^-\phi$ (resp. $\phi=T^+\psi$).
	\item $\psi$ (resp. $\phi$) is $c$-concave (resp. $c$-conconvex).
\end{enumerate}	
\end{The}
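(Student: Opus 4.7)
The plan is to prove the four conditions equivalent by a short cycle $(1)\Rightarrow(3)\Rightarrow(4)\Rightarrow(1)$, together with $(1)\Leftrightarrow(2)$ coming for free from Lemma \ref{lem:T^-T^+1}. I will write out only the $\psi$-version; the $\phi$-version is dual. Throughout I use the standing assumption that the infimum and supremum defining $T^{\pm}$ are attained, so $T^{+}\psi:X\to\R$ whenever $\psi:Y\to\R$.

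First, $(1)\Leftrightarrow(2)$ is immediate: Lemma \ref{lem:T^-T^+1}(1) gives $T^-\circ T^+\psi(y)=\psi(y)$ iff $\partial^c\psi(y)\neq\varnothing$, and (1) and (2) are simply the universally quantified versions of these pointwise statements. Next, $(1)\Rightarrow(3)$ is trivial by setting $\phi:=T^+\psi$, which is real-valued under the standing assumption, so that $\psi=T^-\circ T^+\psi=T^-\phi$.

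The implication $(3)\Rightarrow(4)$ unfolds the definition: if $\psi=T^-\phi$ then
\begin{align*}
\psi(y)=\inf_{x\in X}\{\phi(x)+c(x,y)\}=\inf_{x\in X}\{c(x,y)+\alpha(x)\}\quad\text{with }\alpha:=\phi,
\end{align*}
so $\psi$ is by definition $c$-concave.

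The step that carries the real content is $(4)\Rightarrow(1)$. Suppose $\psi$ is $c$-concave, so there exist an index set $I$, points $\{x_i\}_{i\in I}\subset X$ and constants $\{a_i\}_{i\in I}\subset\R$ with
\begin{align*}
\psi(y)=\inf_{i\in I}\bigl(c(x_i,y)+a_i\bigr),\qquad y\in Y.
\end{align*}
For each $i\in I$ the inequality $\psi(y)\leqslant c(x_i,y)+a_i$ holds for every $y\in Y$, which rearranges to $\psi(y)-c(x_i,y)\leqslant a_i$; taking the supremum over $y$ yields $T^+\psi(x_i)\leqslant a_i$. Consequently
\begin{align*}
c(x_i,y)+a_i\geqslant c(x_i,y)+T^+\psi(x_i)\geqslant \inf_{x\in X}\{T^+\psi(x)+c(x,y)\}=T^-\circ T^+\psi(y),
\end{align*}
and taking the infimum over $i$ gives $\psi(y)\geqslant T^-\circ T^+\psi(y)$. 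Combined with the reverse inequality from Lemma \ref{lem:monotone1}(1), this yields $(1)$. The $\phi$-version proceeds along the same cycle with $\sup$ in place of $\inf$ and with the inequalities reversed, invoking Lemma \ref{lem:T^-T^+1}(2) and Lemma \ref{lem:monotone1}(1) for the $T^+\circ T^-$ side. The only nontrivial step is $(4)\Rightarrow(1)$ above, which is really just the observation that a $c$-concave representation can always be replaced by the canonical one given by $T^+\psi$.
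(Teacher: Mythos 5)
Your proof is correct. The cycle $(1)\Rightarrow(3)\Rightarrow(4)\Rightarrow(1)$ together with $(1)\Leftrightarrow(2)$ does give all the equivalences, and each step checks out. The route differs from the paper's in the handling of condition (4): the paper proves $(1)\Rightarrow(4)$ and $(4)\Rightarrow(1)$ directly, the latter by reading a $c$-concave representation $\psi=\inf_x\{c(x,\cdot)+\beta(x)\}$ as $\psi=T^-\beta$, i.e.\ by passing through (3). This requires implicitly that the representing family is indexed by all of $X$ with a real-valued $\beta$, so that $\beta$ qualifies as a legitimate input to $T^-$ under the standing convention that $T^\pm$ act on real-valued functions. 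Your direct argument for $(4)\Rightarrow(1)$ — showing that $T^+\psi(x_i)\leqslant a_i$ for each member of the family and hence every $c$-affine majorant $c(x_i,\cdot)+a_i$ already dominates $T^-\circ T^+\psi$ — works for an arbitrary index set $I$ and arbitrary constants $a_i$, without needing to upgrade the given representation to one indexed by $X$ or to worry about $\beta$ taking the value $+\infty$. In effect you prove from scratch that the canonical representation $\psi(y)=\inf_x\{c(x,y)+T^+\psi(x)\}$ is the tightest one, which is the kernel of the biconjugate identity; the paper relies on idempotence of $T^-\circ T^+$ (Lemma \ref{lem:monotone1}(2)) to achieve the same effect once it has (3). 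Both are valid; yours is marginally more self-contained and a bit more careful about the exact form of the $c$-concave definition.
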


\begin{proof}
The equivalence between (1) and (2) is immediately from Lemma \ref{lem:T^-T^+1}. The equivalence between (1) and (3) can be obtained by Lemma \ref{lem:monotone1} (2).

If $T^-\circ T^+\psi=\psi$ then $\psi$ is $c$-concave by definition. Now, suppose $\psi$ is $c$-concave which is the infimum of a family of $c$-affine functions $c(x,\cdot)+\beta(x)$. That means $\psi=T^-\beta$. This leads to $T^-\circ T^+\psi=\psi$ by the equivalence of (1) and (4).
\end{proof}

\subsection{Singularities of $c$-concave functions}

\begin{defn}
For any $c$-concave function $\psi:Y\to\R$, $y\in Y$ is called a \emph{regular point} of $\psi$ if $\partial^c\psi(y)$ is a singleton and is called \emph{singular point} of $\psi$ if  $\partial^c\psi(y)$ is not a singleton. For any $c$-concave function $\psi:Y\to\R$, we denote by $\text{Sing}^c(\psi)$ the set of all singular points of $\psi$.
\end{defn}

\begin{Lem}\label{upper}
Suppose $c$ is continuous. Then for any continuous $c$-concave function $\psi:Y\to\R$, the set-valued map $y\mapsto\partial^c\psi(y)$ is upper-semicontinuous. 
\end{Lem}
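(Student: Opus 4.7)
The plan is to reduce upper semicontinuity of the multivalued map $y\mapsto\partial^c\psi(y)$ to the closed-graph property, which is immediate from the continuity hypotheses. By the abstract Lax-Oleinik framework, $x\in\partial^c\psi(y)$ is equivalent to the inequality
\begin{align*}
	\psi(z)-c(x,z)\leqslant\psi(y)-c(x,y)\qquad\text{for every }z\in Y,
\end{align*}
which is a closed condition on the pair $(x,y)\in X\times Y$ whenever $\psi$ and $c$ are continuous.

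The key step is to verify that the graph
\begin{align*}
	\mathrm{Gr}(\partial^c\psi):=\{(y,x)\in Y\times X:x\in\partial^c\psi(y)\}
\end{align*}
is closed. Given a sequence $(y_n,x_n)\to(y_0,x_0)$ in $\mathrm{Gr}(\partial^c\psi)$, for each fixed $z\in Y$ the inequality $\psi(z)-c(x_n,z)\leqslant\psi(y_n)-c(x_n,y_n)$ passes to the limit by the joint continuity of $c$ on $X\times Y$ and the continuity of $\psi$, producing $\psi(z)-c(x_0,z)\leqslant\psi(y_0)-c(x_0,y_0)$. Since $z$ is arbitrary, $x_0\in\partial^c\psi(y_0)$, so $\mathrm{Gr}(\partial^c\psi)$ is closed. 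Equivalently, if $y_n\to y_0$, $x_n\in\partial^c\psi(y_n)$, and $x_n\to x_0$, then $x_0\in\partial^c\psi(y_0)$; this is the standard sequential outer-semicontinuity statement.

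To upgrade the closed-graph property to upper semicontinuity in the neighborhood sense, I would reason by contradiction: if there exist an open set $U\supset\partial^c\psi(y_0)$ and a sequence $y_n\to y_0$ with $\partial^c\psi(y_n)\not\subset U$, pick $x_n\in\partial^c\psi(y_n)\setminus U$; extracting a convergent subsequence $x_{n_k}\to x_0$ and invoking the closed-graph property forces $x_0\in\partial^c\psi(y_0)\subset U$, contradicting the openness of $U$ together with $x_{n_k}\notin U$. The only genuine obstacle in this last step is producing the convergent subsequence in a general Polish space. I expect this to be handled either by the authors' use of the outer-semicontinuity convention for upper semicontinuity, or (in the relevant applications of the paper where $X=Y=M$ is a compact manifold) by direct compactness of $X$, which makes the extraction automatic.
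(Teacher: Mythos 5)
Your proof is correct and establishes the same core fact as the paper's, namely sequential closedness of the graph of $y\mapsto\partial^c\psi(y)$, but you pass to the limit differently. The paper rewrites $x_k\in\partial^c\psi(y_k)$ as the single equation $T^+\psi(x_k)=\psi(y_k)-c(x_k,y_k)$ and exploits the lower semicontinuity of $T^+\psi$ (a supremum of continuous functions) together with the ever-valid inequality $T^+\psi(x)\geqslant\psi(y)-c(x,y)$ to force equality at the limit. You instead unwind the definition into the family of pointwise inequalities $\psi(z)-c(x,z)\leqslant\psi(y)-c(x,y)$ for every fixed $z$, and pass to the limit in each one directly; this is slightly more elementary, avoiding any appeal to semicontinuity of $T^+\psi$. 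Both arguments prove exactly the closed-graph property and nothing more. Your final observation is a real point the paper glosses over: closed graph only yields upper semicontinuity in the neighborhood sense when one can extract convergent subsequences, e.g.\ when $X$ is compact or at least when $\partial^c\psi$ is locally contained in a compact set; in the paper's intended applications $X=Y=M$ is a compact manifold, so this is harmless, but the lemma as stated for general Polish $X$ really asserts outer semicontinuity (closed graph), and the paper's proof delivers exactly that and no more.
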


\begin{proof}
Suppose $\{y_k\}\subset Y$ is a sequence such that $\lim_{k\to\infty}y_k=y$. Let $x_k\in\partial^c\psi(y_k)$, $k\in\N$, such that $x_k\to x$ as $k\to\infty$, i.e.,
\begin{align*}
	T^+\psi(x_k)=\psi(y_k)-c(x_k,y_k),\qquad k\in\N.
\end{align*}
Since $T^+\psi$ is lower-semicontinuous we have
\begin{align*}
	T^+\psi(x)\geqslant\psi(y)-c(x,y)=\liminf_{k\to\infty}T^+\psi(x_k)\geqslant T^+\psi(x).
\end{align*}
Thus $T^+\psi(x)=\psi(y)-c(x,y)$ and this implies $x\in\partial^c\psi(y)$.
\end{proof}

\begin{defn}
A point $x\in\partial^c\psi(y)$ is called a $c$-reachable gradient of $\psi$ at $y$ if there exists $y_k\to y$ as $k\to\infty$, each $\partial^c\psi(y_k)$ is a singleton, say $\partial^c\psi(y_k)=\{x_k\}$, and $\lim_{k\to\infty}x_k=x$. We denote by $\partial^*_c\psi(y)\subset\partial^c\psi(y)$ the set of $c$-reachable gradients of $\psi$ at $y$. 
\end{defn}

For the application of the singularities of $c$-concave functions to optimal transport, we need clarify the difference of the singularities between usual semiconcave function and $c$-concave function, even when $c$ is semiconcave. For the rectifiability property of the singular set $\partial^c\psi$ of a $c$-concave function under more general assumptions, the readers can refer to \cite{Balogh_Penso2018}. We will discuss this problem for three types of specific cost functions from weak KAM theory:
\begin{enumerate}[1.]
	\item $c(x,y)=h(t_1,t_2,x,y)$ with $t_2-t_1\ll1$;
	\item $c(x,y)=h(t_1,t_2,x,y)$ with finite $t_2-t_1$;
	\item $c(x,y)=h(x,y)$ with $h$ the Peierls' barrier. 
\end{enumerate}

Let $M$ be a connected and closed manifold and $\text{SCL}\,(M)$ be the set of semiconcave functions on $M$. Now let us consider the case on the time-independent Lagrangian $L$ and Hamiltonian $H$. For any $\phi\in\text{SCL}\,(M)$, set
\begin{align*}
	t_\phi:=&\,\sup\{t>0:  T^+_t\phi\in C^{1,1}(M)\},
\end{align*}
where $T_t^+\phi:=\breve T_0^t\phi$. Moreover, we denote by
\begin{align*}
	\text{graph}\,(D^+\phi):=&\,\{(x,p)\in T^*M: x\in M, p\in D^+\phi(x)\},\\
	\text{graph}\,(DT_t^+\phi):=&\,\{(x,DT_t^+\phi(x))\in T^*M: x\in M\}.
\end{align*}
We call the set $\text{\rm graph}\,(D^+\phi)$ the \emph{pseudo-graph} of $ D\phi$ or \emph{1-graph} of $\phi$. Now, we recall a result by Marie-Claude Arnaud (\cite{Arnaud2011}) at first.

\begin{Pro}\label{pro:Arnaud}
Let $\phi\in\text{\rm SCL}\,(M)$. Then $t_\phi>0$, and when $0<t<t_\phi$, $T_t^+\phi\in C^{1,1}(M)$. Furthermore, for such $t\in(0,t_\phi]$,
	\begin{equation}\label{eq:graph_evo}
		\text{\rm graph}\,(DT_t^+\phi)=\Phi_H^{-t}(\text{\rm graph}\,(D^+\phi)),
	\end{equation}
	where $\{\Phi^s_H\}_{s\in\R}$ is the Hamiltonian flow with respect to any Tonelli Hamiltonian $H:T^*M\to\R$.
\end{Pro}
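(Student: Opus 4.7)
The plan is to exploit the short-time smoothing effect of the Hamiltonian action: for small $t>0$ the generating function $h(0,t,x,y)$ behaves like $\tfrac{1}{2t}|x-y|^2$, so its second variable is strongly convex with modulus of order $1/t$, and this eventually dominates the semiconcavity constant $C$ of $\phi$. As a result, the sup in $T_t^+\phi(x)=\sup_y\{\phi(y)-h(0,t,x,y)\}$ is attained at a unique, Lipschitz maximizer $y^\ast(x)$, and both the $C^{1,1}$ regularity of $T_t^+\phi$ and the pseudo-graph evolution fall out of an envelope-theorem computation along the minimizing extremal. Concretely, standard Tonelli theory yields $\tau>0$ and constants $c_0,C_0>0$, depending only on $L$ and $M$, such that for every $0<t<\tau$ the minimizer of $\int_0^tL(s,\gamma,\dot\gamma)\,ds$ joining any $x,y\in M$ is unique and non-degenerate, $h(0,t,\cdot,\cdot)\in C^2(M\times M)$, with
\begin{equation*}
D_y^2h(0,t,x,y)\geqslant\tfrac{c_0}{t}I,\qquad \|D^2_{(x,y)}h(0,t,x,y)\|\leqslant\tfrac{C_0}{t},
\end{equation*}
and the dual identities $D_yh(0,t,x,y)=L_v(t,\gamma(t),\dot\gamma(t))$ and $-D_xh(0,t,x,y)=L_v(0,\gamma(0),\dot\gamma(0))$ hold for the unique minimizer $\gamma$ joining $x$ to $y$.

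Next, fix $\phi\in\SCL(M)$ with semiconcavity constant $C$ and let $t\in(0,\tau)$ with $\alpha_t:=c_0/t-C>0$. The map $y\mapsto\phi(y)-h(0,t,x,y)$ is then strongly concave with modulus $\alpha_t$, so it attains its supremum at a unique $y^\ast(x)$, characterized by the first-order condition $D_yh(0,t,x,y^\ast(x))\in D^+\phi(y^\ast(x))$. A two-point comparison based on this inclusion and the strong concavity gives the Lipschitz bound $|y^\ast(x_1)-y^\ast(x_2)|\leqslant (C_0/(t\alpha_t))|x_1-x_2|$, and the envelope identity $DT_t^+\phi(x)=-D_xh(0,t,x,y^\ast(x))$ combined with the smoothness of $h$ then yields $T_t^+\phi\in C^{1,1}(M)$; in particular $t_\phi>0$, proving the first assertion.

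For the pseudo-graph evolution, let $t\in(0,t_\phi]$ and $(y_0,p_0)\in\text{graph}(D^+\phi)$. Take the extremal $\gamma:[0,t]\to M$ with $\gamma(t)=y_0$ and terminal momentum $p_0$, and set $(x_0,\bar p_0):=\Phi_H^{-t}(y_0,p_0)$. The dual identity above gives $p_0=D_yh(0,t,x_0,y_0)$; combined with $p_0\in D^+\phi(y_0)$ this is precisely the first-order condition characterizing the unique maximizer in the definition of $T_t^+\phi(x_0)$, so $y^\ast(x_0)=y_0$. The envelope formula then yields $DT_t^+\phi(x_0)=-D_xh(0,t,x_0,y_0)=\bar p_0$, so $\Phi_H^{-t}(y_0,p_0)\in\text{graph}(DT_t^+\phi)$. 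Conversely, for arbitrary $x_0\in M$ the unique $y_0:=y^\ast(x_0)$ and the minimizing extremal from $x_0$ to $y_0$ give $p_0:=D_yh(0,t,x_0,y_0)\in D^+\phi(y_0)$ and $\Phi_H^{-t}(y_0,p_0)=(x_0,DT_t^+\phi(x_0))$, which closes \eqref{eq:graph_evo}. The main technical obstacle is the sub-differential incarnation of the middle step: because $\phi$ is merely semiconcave, the first-order condition is an inclusion in $D^+\phi$ rather than an equality of gradients, and the strong-concavity comparison that produces both the uniqueness of $y^\ast(x)$ and its Lipschitz dependence must be carried out with this multivalued condition; beyond $t_\phi$ the quantitative bound $c_0/t>C$ fails, conjugate points in the flow lifted from $\text{graph}(D^+\phi)$ can form, and $T_t^+\phi$ may lose $C^{1,1}$ regularity.
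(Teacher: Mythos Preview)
The paper does not prove this proposition; it quotes the result from Arnaud, and the remark immediately after it says only that her argument proceeds by approximation in the sense of Hausdorff distance of the pseudo-graph. Your direct approach via strong concavity of $y\mapsto\phi(y)-h(0,t,x,y)$ is a standard alternative route, but as written it contains a genuine error and an incomplete step.

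The assertion that for all $0<t<\tau$ one has $h(0,t,\cdot,\cdot)\in C^2(M\times M)$ with a global bound $D^2_yh\geqslant(c_0/t)I$ is false on a compact manifold: already on $M=S^1$ with $L=\tfrac12 v^2$, the fundamental solution $h(0,t,x,y)=\tfrac{1}{2t}\min_{k\in\Z}|y-x-k|^2$ is not even $C^1$ at antipodal pairs, for \emph{every} $t>0$. The estimates you want hold only on a tubular neighbourhood of the diagonal; to use them you must first prove, by an a priori comparison (test the supremum at $y=x$ and exploit superlinearity of $L$, as in the estimate the paper itself invokes from \cite{CCJWY2020}), that any maximizer satisfies $d(y^\ast(x),x)\leqslant\lambda t$, so that for small $t$ the problem localizes into that neighbourhood. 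Only then does your two-point strong-concavity comparison make sense. Second, your argument for \eqref{eq:graph_evo} invokes both the uniqueness of $y^\ast(x)$ and the dual identity $p_0=D_yh(0,t,x_0,y_0)$, which you established only under the quantitative hypothesis $c_0/t>C$; this covers small $t$, not the full range $t\in(0,t_\phi]$ claimed in the statement. To reach the whole interval you need a different mechanism for uniqueness: once $T_t^+\phi$ is known to be differentiable at $x_0$, every maximizer $y_0$ is the endpoint of a calibrated extremal whose initial momentum is forced to equal $DT_t^+\phi(x_0)$, and this pins down $y_0$ without any appeal to strong concavity.
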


%

\begin{Rem}
The original proof of Arnaud's theorem is based on some approximation in the sense of Hausdorff distance of the graph for time-independent Lagrangian (see also \cite{Shi_Cheng_Hong2023} for the case of time dependent Lagrangian). The relation \eqref{eq:graph_evo} defines a diffeomorphism and bi-Lipschitz homeomorphism between $\text{\rm graph}\,(D^+\phi)$ and $\text{\rm graph}\,(DT^+_t\phi)$ by the Hamiltonian flow. This allows us to understand the singularities of $\phi$ by means of the latter Lipschitz graph. Except for some finer results obtain in \cite{Albano_Cannarsa1999}, this idea also leads to more topological results of the singular set of semiconcave functions and $c$-concave functions from the theory of optimal transport (\cite{Shi_Cheng_Hong2023}). 
\end{Rem}

Since Proposition \ref{pro:Arnaud} only holds for short time $t_2-t_1\ll1$, we have to consider our problem for $c(x,y)=h(t_1,t_2,x,y)$ in the following three cases: $t_2-t_1\ll1$, $t_2-t_1$ is finite and $t_2-t_1\to\infty$.

\subsubsection{$c(x,y)=h(t_1,t_2,x,y)$ with $t_2-t_1\ll1$}

\begin{Pro}\label{pro:coincide}
Let $M$ be a connected closed manifold and $\psi:M\to\R$ be a $c$-concave function. Then $\psi$ be semiconcave in usual sense. For the cost function $c(x,y)=h(t_1,t_2,x,y)$, $x,y\in M$, with $0<t_2-t_1<t_\psi$ as in Proposition \ref{pro:Arnaud}, there exists a bi-Lipschitz homeomorphism between $\partial^c\psi(y)$ and $D^+\psi(y)$ for all $y\in M$. In particular, 
\begin{align*}
	\mbox{\rm Sing}^c(\psi)=\mbox{\rm Sing}\,(\psi),
\end{align*}
where $\mbox{\rm Sing}\,(\psi)$ is the set of the points of non-differentiability of $\psi$.
\end{Pro}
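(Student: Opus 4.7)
The plan is to combine Arnaud's theorem (Proposition~\ref{pro:Arnaud}) with the Lax-Oleinik duality of Theorem~\ref{thm:commutator} so as to identify the pseudo-graph of $D^+\psi$ with the $C^{1,1}$ graph of $DT^+\psi$ via the Hamiltonian flow. Since $\psi$ is $c$-concave, Theorem~\ref{thm:commutator} writes $\psi=T^-\phi$ with $\phi:=T^+\psi$, so that $\psi(y)=\inf_{x\in M}\{\phi(x)+h(t_1,t_2,x,y)\}$. For $t:=t_2-t_1$ sufficiently small, $h(t_1,t_2,x,\cdot)$ is of class $C^2$ with $y$-Hessian bounded uniformly in $x$ (the standard short-time estimate for Tonelli fundamental solutions), so Proposition~\ref{pro:marginal} applied with parameter $x\in M$ gives $\psi\in\mathrm{SCL}(M)$ together with
\[
D^+\psi(y)=\mathrm{co}\{D_yh(t_1,t_2,x,y):x\in\partial^c\psi(y)\}.
\]

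Under the standing assumption $t<t_\psi$, Proposition~\ref{pro:Arnaud} now guarantees $\phi\in C^{1,1}(M)$ together with $\mathrm{graph}(D^+\psi)=\Phi_H^{t}(\mathrm{graph}(D\phi))$. I would then introduce $F_y(x):=D_yh(t_1,t_2,x,y)$. For any $x\in\partial^c\psi(y)$ the first-order condition at the minimum $\psi(y)=\phi(x)+h(t_1,t_2,x,y)$ yields $D\phi(x)=-D_xh(t_1,t_2,x,y)$, and the standard correspondence between minimizing extremals and Hamiltonian trajectories produces $\Phi_H^{t}(x,D\phi(x))=(y,F_y(x))$. Conversely, for any $p\in D^+\psi(y)$, Arnaud's theorem supplies a unique preimage $(x,D\phi(x))=\Phi_H^{-t}(y,p)$, whose projected trajectory is the short-time minimizing curve from $x$ to $y$ with initial momentum $D\phi(x)$; since $\phi\in C^1$, this endpoint $y$ must coincide with the unique maximizer $y^{\ast}(x)$ in $\phi(x)=\sup_{y'}\{\psi(y')-h(t_1,t_2,x,y')\}$, forcing $\phi(x)+h(t_1,t_2,x,y)=\psi(y)$, i.e.\ $x\in\partial^c\psi(y)$.

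Therefore $F_y$ factors as $x\mapsto(x,D\phi(x))\stackrel{\Phi_H^{t}}{\longmapsto}(y,F_y(x))\mapsto F_y(x)$, each step bi-Lipschitz on the compact manifold $M$ (the first because $D\phi$ is Lipschitz, the second because $\Phi_H^{t}$ is a smooth diffeomorphism, the last being a projection); hence $F_y\colon\partial^c\psi(y)\to D^+\psi(y)$ is a bi-Lipschitz homeomorphism, and the equality $\mathrm{Sing}^c(\psi)=\mathrm{Sing}(\psi)$ follows by comparing cardinalities. The main obstacle will be the converse half of the bijection: a priori the preimage $\Phi_H^{-t}(y,p)$ only exhibits $x$ as a critical point of $\phi(\cdot)+h(t_1,t_2,\cdot,y)$, and upgrading this to a genuine minimum uses crucially both the $C^{1,1}$ regularity of $\phi$ delivered by Arnaud and the absence of conjugate points for $t<t_\psi$, which together force the Hamiltonian endpoint to agree with the unique maximizer $y^{\ast}(x)$ in the definition of $\phi(x)$.
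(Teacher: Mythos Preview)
Your approach is correct and is essentially the same as the paper's: both hinge on Arnaud's theorem to identify $\text{graph}(D^+\psi)$ with $\Phi_H^{t_1,t_2}(\text{graph}(D\breve T_{t_1}^{t_2}\psi))$, and then restrict this bi-Lipschitz correspondence to the fibre over a fixed $y$ to produce the desired homeomorphism $\partial^c\psi(y)\leftrightarrow D^+\psi(y)$.

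The one step you handle differently is the converse direction, showing that the preimage $(x,D\phi(x))=\Phi_H^{-t}(y,p)$ actually satisfies $x\in\partial^c\psi(y)$. The paper does this in one line via Fermat's rule for semiconcave functions: since $p=D_yh(t_1,t_2,x,y)\in D^+\psi(y)$, one has $0\in D^+\bigl(\psi-h(t_1,t_2,x,\cdot)\bigr)(y)$, and a semiconcave function with $0$ in its superdifferential attains its global maximum there, so $y$ maximizes $\psi(\cdot)-h(t_1,t_2,x,\cdot)$, i.e.\ $x\in\partial^c\psi(y)$. Your detour through the unique maximizer $y^*(x)$ of $\phi=T^+\psi$ is a valid alternative (it works because $\phi\in C^{1}$ forces a unique initial momentum, hence a unique Hamiltonian endpoint), but it is a little more roundabout and implicitly reuses the forward direction. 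The Fermat-rule argument is cleaner and avoids any appeal to uniqueness of $y^*(x)$ or to the conjugate-point threshold you flag at the end.
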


\begin{proof}
By Proposition \ref{pro:Arnaud} (see also \cite[Theorem 3.14]{Shi_Cheng_Hong2023}), we have
\begin{align*}
	\Phi_{H}^{t_1,t_2}(\text{\rm graph}\,(D\breve{T}_{t_1}^{t_2}\psi))=\text{\rm graph}\,(D^+\psi),
\end{align*}
where $\{\Phi^{t_1,t_2}_H\}$ is the non-autonomous Hamiltonian flow from $t_1$ to $t_2$. Given $y\in M$, and $\forall p\in D^+\psi(y)$ we denote by $(\gamma(\cdot),p(\cdot))$ the Hamiltonian trajectory determined by endpoint condition $(\gamma(t_2),p(t_2))=(y,p)$. Recalling that $p(t_2)=L_v(t_2,\gamma(t_2),\dot{\gamma}(t_2))\in D^+\psi(y)$, it follows $0\in D^+(\psi(y)-h(t_1,t_2,\gamma(t_1),y))$ by Fermat rule, or equivalently that $y$ is a maximizer of the function $\phi(\cdot)-h(t_1,t_2,\gamma(t_1),\cdot)$. Therefore, for $x=\gamma(t_1)$ we have $\breve{T}^{t_2}_{t_1}\psi(x)=\psi(y)-h(t_1,t_2,x,y)$ which implies $x\in\partial^c\psi(y)$ since $ D\breve{T}^{t_2}_{t_1}\psi(x)=p(t_1)=L_v(t_1,\gamma(t_1),\dot{\gamma}(t_1))$. The converse direction can be handled in a similar way. For any $y\in M$, set
\begin{align*}
	\Gamma(y):=\{(x,p): x\in\partial^c\psi(y), p= D\breve{T}^{t_2}_{t_1}\psi(x)\}\subset\text{\rm graph}\,(D\breve{T}_{t_1}^{t_2}\psi). 
\end{align*}
Then, this leads to the relation
\begin{align*}
	 (y,D^+\psi(y))=\Phi_{H}^{t_1,t_2}(\Gamma(y)),\qquad y\in M.
\end{align*}
Notice that $\text{\rm graph}\,(D\breve{T}_{t_1}^{t_2}\psi)$ is the graph of a Lipschitz function. This shows there is a bi-Lipschitz homeomorphism between $\partial^c\psi(y)$ and $D^+\psi(y)$ for each $y\in M$. 
\end{proof}

\subsubsection{$c(x,y)=h(t_1,t_2,x,y)$ with finite $t_2-t_1$}

\begin{Lem}\label{lem:D^+}
Let $S$ be a compact topological space and $F:S\times\R^d\to\R$. Suppose $F$ is continuous and $F(s,\cdot)$ is uniformly semiconcave with constant $C\geqslant0$ for $s\in S$. Set $u:\R^d\to\R$,
\begin{align*}
	u(x)=\inf\{F(s,x): s\in S\},\qquad x\in\R^d.
\end{align*}
For any $x\in\R^d$, let $M(x)=\{s\in S:F(s,x)=u(x)\}$. Then $u$ is semiconcave with constant $C$ and $D^+u(x)=\mbox{\rm co}\,\{D^+_xF(s,x): s\in M(x)\}$.
\end{Lem}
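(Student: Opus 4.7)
The plan is to parallel the argument for Proposition \ref{pro:marginal}, handling the set-valued nature of $D_x^+F(s,\cdot)$ with some extra care. Throughout, note that compactness of $S$ and continuity of $F$ guarantee $M(x)\neq\varnothing$ for every $x\in\R^d$.

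First I would verify both the semiconcavity of $u$ and the easy inclusion $\mbox{\rm co}\,\{D_x^+F(s,x):s\in M(x)\}\subset D^+u(x)$. Fix $x\in\R^d$, pick $s_0\in M(x)$ and $p\in D_x^+F(s_0,x)$. Combining $u\le F(s_0,\cdot)$ with equality at $x$ and the semiconcavity bound for $F(s_0,\cdot)$,
\begin{align*}
u(y)\le F(s_0,y)\le F(s_0,x)+\langle p,y-x\rangle+\tfrac{C}{2}|y-x|^2=u(x)+\langle p,y-x\rangle+\tfrac{C}{2}|y-x|^2
\end{align*}
for every $y\in\R^d$. This simultaneously shows that $u$ is semiconcave with constant $C$ and that $D_x^+F(s_0,x)\subset D^+u(x)$; convexity of $D^+u(x)$ then gives the full convex-hull inclusion.

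The core of the proof is the reverse inclusion. Using the standard identity $D^+u(x)=\mbox{\rm co}\,D^*u(x)$ for semiconcave $u$ recalled in the preliminaries, it suffices to establish
\begin{align*}
D^*u(x)\subset\bigcup_{s\in M(x)}D_x^+F(s,x).
\end{align*}
Given $p\in D^*u(x)$, I would choose $x_k\to x$ with $u$ differentiable at $x_k$ and $Du(x_k)\to p$, then pick $s_k\in M(x_k)$ and extract a subsequence with $s_k\to s^*$. Continuity of $F$ and of $u$ (via its just-proved semiconcavity) allow passage to the limit in $F(s_k,x_k)=u(x_k)$, yielding $s^*\in M(x)$. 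At each $x_k$, the function $F(s_k,\cdot)-u(\cdot)$ is nonnegative and vanishes at $x_k$; playing the semiconcavity estimate for $F(s_k,\cdot)$ against the first-order expansion of $u$ at $x_k$ forces every $q\in D_x^+F(s_k,x_k)$ to satisfy $\langle q-Du(x_k),v\rangle\ge 0$ for all $v\in\R^d$, hence $q=Du(x_k)$. In particular $F(s_k,\cdot)$ is differentiable at $x_k$ with $D_xF(s_k,x_k)=Du(x_k)$.

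To conclude, I would invoke the closed-graph property of the superdifferential with respect to the pair $(s,x)$: if $s_k\to s$ in $S$, $x_k\to x$ in $\R^d$, $p_k\in D_x^+F(s_k,x_k)$ and $p_k\to p$, then letting $k\to\infty$ in
\begin{align*}
F(s_k,y)\le F(s_k,x_k)+\langle p_k,y-x_k\rangle+\tfrac{C}{2}|y-x_k|^2,\qquad y\in\R^d,
\end{align*}
and using joint continuity of $F$ yields $p\in D_x^+F(s,x)$. Applied with $p_k=D_xF(s_k,x_k)=Du(x_k)\to p$ and $s_k\to s^*\in M(x)$, this delivers $p\in D_x^+F(s^*,x)$, completing the inclusion. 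The main obstacle is the step that pins down $D_x^+F(s_k,x_k)=\{Du(x_k)\}$: it requires carefully balancing the semiconcavity-driven upper bound on $F(s_k,\cdot)-F(s_k,x_k)$ against the Fr\'echet expansion of $u$ at the differentiability point $x_k$ to rule out any nontrivial element of the superdifferential. The remaining ingredients are routine upper-semicontinuity statements made possible by compactness of $S$ and joint continuity of $F$.
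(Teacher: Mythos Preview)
Your argument is correct, but the paper takes a different and much shorter route. Rather than working directly with reachable gradients, the paper represents each semiconcave $F(s,\cdot)$ as the infimum of its touching parabolas
\[
\phi_{s,z,p}(\cdot)=F(s,z)+\langle p,\cdot-z\rangle+\tfrac{C}{2}|\cdot-z|^2,\qquad z\in\R^d,\ p\in D_x^+F(s,z),
\]
so that $u$ itself becomes an infimum over a family of $C^2$ functions with Hessian equal to $CI$. The conclusion is then read off from the standard $C^2$ marginal-function result (Proposition~\ref{pro:marginal}, i.e.\ \cite[Theorem~3.4.4]{Cannarsa_Sinestrari_book}).

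The paper's reduction is economical and leverages existing machinery, though one must be a little careful that the enlarged parameter set still meets the hypotheses of the cited theorem. Your direct approach is self-contained and makes the mechanism explicit. One small simplification: the step you flag as the ``main obstacle'', namely $D_x^+F(s_k,x_k)=\{Du(x_k)\}$, is in fact immediate from the easy inclusion you already established. Applied at the differentiability point $x_k$ with $s_k\in M(x_k)$, that inclusion gives $D_x^+F(s_k,x_k)\subset D^+u(x_k)=\{Du(x_k)\}$, and nonemptiness of the superdifferential of a semiconcave function finishes the job; no separate balancing argument is needed.
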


Lemma \ref{lem:D^+} is a refinement of Proposition \ref{pro:marginal}. We give a proof in the appendix.

\begin{Pro}\label{pro:long_time}
Under the same assumption of Proposition \ref{pro:coincide} except for that $0<t_2-t_1<t_\psi$. Then the following statements hold.
\begin{enumerate}[\rm (1)]
	\item $D^+\psi(y)$ is a singleton implies $\partial^c\psi(y)$ is a singleton. Consequently,
	\begin{align*}
		\mathrm{Sing}^c(\psi)\subset\mathrm{Sing}\,(\psi).
	\end{align*}
	\item There exists a Lipschitz map from $D^*\psi(y)$ to $\partial^*_c\psi(y)$.
\end{enumerate}
\end{Pro}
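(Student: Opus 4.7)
The plan is to treat both parts through a single mechanism: the endpoint momentum of any action-minimizer emanating from a point in $\partial^c\psi(y)$ always lies in $D^+\psi(y)$, and the backward Hamiltonian flow inverts this correspondence unambiguously. The underlying fact is that for each fixed $x$ and finite $t_2-t_1>0$ the function $h(t_1,t_2,x,\cdot)$ is locally semiconcave on $M$, and for every minimizer $\gamma\in\Gamma^{t_1,t_2}_{x,y}$ its endpoint momentum $L_v(t_2,y,\dot\gamma(t_2))$ lies in $D^+_yh(t_1,t_2,x,y)$; this follows from the marginal-function description in Proposition~\ref{pro:marginal} together with Lemma~\ref{lem:D^+}.

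For part (1), I would first fix $x\in\partial^c\psi(y)$ together with a minimizer $\gamma\in\Gamma^{t_1,t_2}_{x,y}$. The $c$-superdifferential condition gives
\[
\psi(y')-\psi(y)\leqslant h(t_1,t_2,x,y')-h(t_1,t_2,x,y)\qquad\text{for all }y'\in M,
\]
and combining this with the semiconcave upper bound produced by $p_\gamma:=L_v(t_2,y,\dot\gamma(t_2))\in D^+_yh(t_1,t_2,x,y)$ yields $p_\gamma\in D^+\psi(y)$. If $D^+\psi(y)=\{p_0\}$ is a singleton, then every endpoint momentum of every minimizer from every $x\in\partial^c\psi(y)$ equals $p_0$. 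Since the non-autonomous Hamiltonian flow $\Phi^{t_1,t_2}_H$ is a diffeomorphism of $T^*M$, the base point is recovered uniquely as $x=\pi\circ\Phi^{t_2,t_1}_H(y,p_0)$, where $\pi:T^*M\to M$ is the bundle projection. Hence $\partial^c\psi(y)$ is a singleton, and the inclusion $\mathrm{Sing}^c(\psi)\subset\mathrm{Sing}\,(\psi)$ follows.

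For part (2), I would define the candidate map
\[
\Psi(p):=\pi\circ\Phi^{t_2,t_1}_H(y,p),\qquad p\in D^+\psi(y),
\]
which is Lipschitz (indeed smooth) because the Hamiltonian flow and the bundle projection are. The remaining task is to verify $\Psi(D^*\psi(y))\subset\partial^*_c\psi(y)$. Given $p\in D^*\psi(y)$, pick $y_k\to y$ with $\psi$ differentiable at each $y_k$ and $D\psi(y_k)\to p$. Part (1) applied at $y_k$ forces $\partial^c\psi(y_k)=\{x_k\}$ to be a singleton, and the argument of part (1) further identifies $x_k=\pi\circ\Phi^{t_2,t_1}_H(y_k,D\psi(y_k))$. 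Letting $k\to\infty$ yields $x_k\to\Psi(p)$, which means $\Psi(p)$ is a $c$-reachable gradient of $\psi$ at $y$ by definition.

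The step I expect to require the most care is the claim $p_\gamma\in D^+\psi(y)$ when $y$ may be a conjugate or cut point of $x$ along $\gamma$, so that $h(t_1,t_2,x,\cdot)$ is genuinely only semiconcave (not $C^{1,1}$) near $y$. One must rely on the sharp description of $D^+_yh(t_1,t_2,x,y)$ in terms of endpoint momenta of minimizers rather than on any smoothness at a single minimizer. This is essentially bookkeeping once Proposition~\ref{pro:marginal} and Lemma~\ref{lem:D^+} are in hand, and it is the only place in the proof where the structure of the fundamental solution enters beyond the invertibility of the Hamiltonian flow.
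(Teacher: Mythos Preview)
Your proposal is correct and follows essentially the same approach as the paper: both arguments hinge on showing that the endpoint momentum of any minimizer from $x\in\partial^c\psi(y)$ lies in $D^+\psi(y)$, then invoke invertibility of the Hamiltonian flow to recover $x$ uniquely when $D^+\psi(y)$ is a singleton, and for part (2) pass to limits along sequences of differentiability points. Your direct inequality route to $p_\gamma\in D^+\psi(y)$ (in place of the paper's application of Lemma~\ref{lem:D^+} to the marginal representation $\psi=T_{t_1}^{t_2}\circ\breve T_{t_1}^{t_2}\psi$) and your use of flow continuity instead of the paper's Ascoli--Arzel\`a step are minor streamlinings, not genuine departures.
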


\begin{proof}
Let $y\in M$. We suppose $y$ is a point of differentiability of $\psi$. Since $T_{t_1}^{t_2}\circ \breve T_{t_1}^{t_2}\psi=\psi$, then $h(t_1,t_2,x,\cdot)$ is differentiable at $y$ and $D_yh(t_1,t_2,x,y)$ coincides for any $x\in\arg\min\{\breve T_{t_1}^{t_2}\psi(\cdot)+h(t_1,t_2,\cdot,y)\}$ by Lemma \ref{lem:D^+}. Recall that each $D_yh(t_1,t_2,x,y)=L_v(t_2,\gamma_x(t_2),\dot{\gamma}_x(t_2))$ with $\gamma_x\in\Gamma^{t_1,t_2}_{x,y}$ the unique minimizer for $h(t_1,t_2,x,y)$. Since each $\gamma_x$ satisfies the same endpoint condition 
\begin{align*}
	\gamma_x(t_2)=y,\qquad \dot{\gamma}_x(t_2)=H_p(t_2,y,D_yh(t_1,t_2,x,y)),
\end{align*}
we conclude that there exists a unique $x\in M$ such that
\begin{align*}
	\psi(y)=T_{t_1}^{t_2}\circ \breve T_{t_1}^{t_2}\psi(y)=\breve T_{t_1}^{t_2}\psi(x)+h(t_1,t_2,x,y)
\end{align*}
and $D\psi(y)=D_yh(t_1,t_2,x,y)$. The first part of (1) implies $\text{Sing}^c(\psi)\subset\text{Sing}\,(\psi)$. This completes the proof of (1).

Now, suppose $p\in D^*\psi(y)$. Then there exists a sequence $y_k\to y$ as $k\to\infty$, $\psi$ is differentiable at each $y_k$ and $p=\lim_{k\to\infty}D\psi(y_k)$. From the first part of the proof, for each $k$ there exists a unique $x_k$ such that $\partial^c\psi(y_k)=\{x_k\}$. As in the proof of the first part, let $\gamma_{x_k}\in\Gamma^{t_1,t_2}_{x_k,y_k}$ be the unique minimal curve for $h(t_1,t_2,x_k,y_k)$. Applying Ascoli-Arzela theorem, any convergent subsequence of $\{\gamma_{x_k}\}$ has a limiting curve $\gamma\in\Gamma^{t_1,t_2}_{x,y}$ under $C^2$-topology satisfying the same endpoint condition $\gamma(t_2)=y$ and $L_v(t_2,\gamma(t_2),\dot{\gamma}(t_2))=p$. This implies the sequence $\{\gamma_{x_k}\}$ converges a curve $\gamma:[t_1,t_2]\to M$ connecting $x$ to $y$. By continuity, we have
\begin{align*}
	\psi(y)=\breve T_{t_1}^{t_2}\psi(\gamma(0))+\int^{t_2}_{t_1}L(s,\gamma(s),\dot{\gamma}(s))\ ds.
\end{align*} 
This implies $\lim_{k\to\infty}x_k=x$ and $x\in\partial^*_c\psi(y)$. Let $i$ be the inclusion from $D^*\psi(y)$ to the graph $\{(y,p):p\in D^*\psi(y)\}$. We finish the proof by observing that $\pi_x\circ\Phi^{t_1,t_2}_H\circ i$ defines a Lipschitz map from $D^*\psi(y)$ to $\partial^*_c\psi(y)$.
\end{proof}

\subsubsection{$c(x,y)=h(x,y)$ with $h$ the Peierls' barrier}

Now, consider the case when $L$ is time-independent and the cost function $c(x,y)=h(x,y)$, where $h(x,y)=\liminf_{t\to\infty}h(0,t,x,y)+c[0]t$ is the Peierls' barrier (\cite{Fathi_book}). In \cite{Mather1993}, Mather introduced a pseudo-distance on the Aubry set. We suppose $c[0]=0$ and let
\begin{align*}
	d(x,y)=h(x,y)+h(y,x),\qquad \forall x,y\in\mathcal{A}.
\end{align*}
Mather proved $d$ is a pseudo-distance on $\mathcal{A}$ and each equivalence class of the relation $d(x,y)=0$ is called a static class (see more details in \cite{Contreras_Paternain2002}).

\begin{Pro}\label{pro:static_class}
Let $L$ be a time-independent Tonelli Lagrangian, $u^-$ be a weak KAM solution and and $c=h$ with $h$ the Peierls' barrier. Then, for any $y\in M$, there exists a static class $\mathcal{A}_y\subset\partial^cu^-(y)$.
\end{Pro}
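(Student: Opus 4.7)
The plan is to exhibit a concrete static class inside $\partial^c u^-(y)$ by tracking the $\alpha$-limit of a backward calibrated curve ending at $y$ and then exploiting the rigidity of the Peierls' barrier on static classes. The whole argument stays within weak KAM theory and uses only the triangle inequality for $h$ together with the $h$-domination of weak KAM solutions.

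First I would use weak KAM theory to produce a backward $u^-$-calibrated curve $\gamma:(-\infty,0]\to M$ with $\gamma(0)=y$, which satisfies
\begin{equation*}
	u^-(y)-u^-(\gamma(-t))=h_t(\gamma(-t),y),\qquad t>0,
\end{equation*}
where $h_t(x,y):=h(0,t,x,y)$. By compactness of $M$, $\alpha(\gamma)$ is nonempty and contained in the projected Aubry set $\mathcal A$. Fix any $x_0\in\alpha(\gamma)$ with $\gamma(-t_k)\to x_0$ for some $t_k\uparrow\infty$. The crucial identity $h(x_0,y)=u^-(y)-u^-(x_0)$ is then obtained by passing to the limit in the calibration equality (using continuity of $u^-$), then invoking the equi-Lipschitz property of $\{h_t(\cdot,y)\}_{t\geqslant1}$ to conclude $h_{t_k}(x_0,y)\to u^-(y)-u^-(x_0)$, and finally combining the resulting $\liminf$ bound with the $h$-domination $u^-(y)\leqslant u^-(x_0)+h(x_0,y)$. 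Since domination also gives $u^-(y')-h(x_0,y')\leqslant u^-(x_0)$ for every $y'\in M$, the identity shows $y$ maximizes $u^-(\cdot)-h(x_0,\cdot)$, i.e. $x_0\in\partial^c u^-(y)$.

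Next I would propagate this membership to the whole static class $\mathcal A_y$ of $x_0$. For $x\in\mathcal A_y$ one has $h(x,x_0)+h(x_0,x)=0$; adding the two domination inequalities $u^-(x)\leqslant u^-(x_0)+h(x_0,x)$ and $u^-(x_0)\leqslant u^-(x)+h(x,x_0)$ forces both to be equalities, whence $u^-(x)=u^-(x_0)+h(x_0,x)$. The triangle inequalities $h(x,y)\leqslant h(x,x_0)+h(x_0,y)$ and $h(x_0,y)\leqslant h(x_0,x)+h(x,y)$ for the Peierls' barrier, combined with $h(x,x_0)=-h(x_0,x)$, pin down the sharp identity $h(x,y)=h(x,x_0)+h(x_0,y)$. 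Summing yields
\begin{equation*}
	u^-(x)+h(x,y)=u^-(x_0)+h(x_0,x)+h(x,x_0)+h(x_0,y)=u^-(x_0)+h(x_0,y)=u^-(y),
\end{equation*}
so $u^-(y)-h(x,y)=u^-(x)$, and the maximizer argument of the previous paragraph applied to $x$ in place of $x_0$ gives $x\in\partial^c u^-(y)$.

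The main difficulty lies in the limiting argument for the identity $h(x_0,y)=u^-(y)-u^-(x_0)$: justifying that the $\liminf$ defining the Peierls' barrier is actually reached along the subsequence produced by the $\alpha$-limit. This hinges on the equi-Lipschitz estimate for $h_t$ on $t\geqslant 1$, a standard but nontrivial ingredient of weak KAM theory. Once this identity is secured, the remainder is algebraic manipulation on Mather's pseudo-distance.
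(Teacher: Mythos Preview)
Your proof is correct and follows the same overall architecture as the paper's: take a backward curve from $y$, extract an $\alpha$-limit point $x_0$ in the Aubry set, show $x_0\in\partial^c u^-(y)$, then propagate to the whole static class using Mather's pseudo-distance. The difference lies in the toolkit. The paper works through the conjugate pair: it invokes the commutator identity $u^-=T^-_t\circ T^+_tu^-$, uses the uniform convergence $T^+_tu^-\to u^+$ to obtain $u^-(y)=u^+(x_0)+h(x_0,y)$, and propagates via $u^+(x')-u^+(x_0)=h(x_0,x')$ on static classes, closing the argument with the admissibility of $(u^+,u^-)$. You bypass $u^+$ entirely, staying with $u^-$ and $h$-domination: calibration plus the equi-Lipschitz estimate for $h_t(\cdot,y)$ gives $u^-(y)=u^-(x_0)+h(x_0,y)$ directly, and the propagation is pure triangle-inequality algebra on $h$. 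Since $u^+=u^-$ on $\mathcal A$, the two routes are equivalent at the level of the identities being proved; your version is more self-contained (no need for the conjugate function or the commutator machinery), while the paper's version fits naturally into its Lax--Oleinik framework and makes the Kantorovich-pair structure explicit.
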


\begin{proof}
Recall that $u^-=T^-_t\circ T^+_tu^-$ for all $t\geqslant0$ (\cite{Cannarsa_Cheng_Hong2023}), where $T_t^-\phi:=T_0^t\phi$. Fix $y\in M$ and let $p\in D^*u^-(y)=D^*T^-_t\circ T^+_tu^-(y)$. Then there exists $\gamma_t:[-t,0]\to M$ with $\gamma_t(0)=y$ and $L_v(\gamma_t(0),\dot{\gamma}_t(0))=p$ such that
\begin{align*}
	u^-(y)=T^-_t\circ T^+_tu^-(y)=T^+_tu^-(\gamma_t(-t))+\int^0_{-t}L(\gamma_t(s),\dot{\gamma}_t(s))\ ds=T^+_tu^-(\gamma_t(-t))+h(-t,0,\gamma_t(-t),y).
\end{align*}
Observe that since all the curves $\gamma_t$ are extremals with the same endpoint condition. Thus, there exists $\gamma:(-\infty,0]\to M$ with $\gamma(0)=y$ and $L_v(\gamma(0),\dot{\gamma}(0))=p$ such that $\gamma_t=\gamma|_{[-t,0]}$. From classical weak KAM theory we know that $T^+_tu^-$ converges uniformly to $u^+$, the conjugate function of $u^-$, as $t\to+\infty$. Then, for any limiting point $x$ of $\gamma(-t)$ as $t\to+\infty$, we have
\begin{align*}
	u^-(y)=u^+(x)+h(x,y).
\end{align*}
Such points $x\in\mathcal{A}$, the projected Aubry set. 

For any $x,x'\in\mathcal{A}$ belonging in the same static class we have
\begin{align*}
	h(x,x')+h(x',x)=0,
\end{align*}
and
\begin{align*}
	u^+(x')-u^+(x)=h(x,x').
\end{align*}
Therefore, if $x\in\mathcal{A}$ is given in the first part of the proof, which satisfies $u^+(x)=u^-(y)-h(x,y)$, then for any $x'\in\mathcal{A}$ belonging in the same static class as $x$ we have
\begin{align*}
	u^+(x')=u^+(x)+h(x,x')=u^-(y)-h(x,y)-h(x',x).
\end{align*}
By the property of Peierls' barrier 
\begin{align*}
	h(x',y)\leqslant h(x',x)+h(x,y).
\end{align*}
It follows
\begin{align*}
	u^+(x')\leqslant u^-(y)-h(x',y),\qquad\text{or}\qquad u^-(y)-u^+(x')\geqslant h(x',y).
\end{align*}
The second inequality is indeed an equality because $(u^-,u^+)$ is an admissible Kantorovich pair. This completes the proof.
\end{proof}

The following corollary is immediate by Proposition \ref{pro:static_class} and the definition of $\text{Sing}^c(u^-)$.

\begin{Cor}
If each static class is not a singleton, then $\mbox{\rm Sing}^c(u^-)=M$.
\end{Cor}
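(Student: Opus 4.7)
The plan is to reduce the corollary directly to Proposition~\ref{pro:static_class} together with the definition of $\mathrm{Sing}^c(u^-)$. The key observation is that Proposition~\ref{pro:static_class} already does the heavy lifting: for an arbitrary $y\in M$, it produces a full static class $\mathcal{A}_y$ sitting inside $\partial^c u^-(y)$. Therefore, once we know every static class contains at least two points, the cardinality of $\partial^c u^-(y)$ is automatically at least two, and $y$ qualifies as a singular point in the $c$-concave sense.

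Concretely, I would argue as follows. Fix $y\in M$ arbitrarily and invoke Proposition~\ref{pro:static_class} to obtain a static class $\mathcal{A}_y\subset\partial^c u^-(y)$. By the standing hypothesis of the corollary, $\mathcal{A}_y$ is not a singleton, so we may pick $x_1,x_2\in\mathcal{A}_y$ with $x_1\neq x_2$. Both then lie in $\partial^c u^-(y)$, so $\partial^c u^-(y)$ is not a singleton, which means $y\in\mathrm{Sing}^c(u^-)$ by the definition of singular point of a $c$-concave function. Since $y$ was arbitrary, $\mathrm{Sing}^c(u^-)=M$.

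There is essentially no obstacle here: the argument is a one-line deduction from Proposition~\ref{pro:static_class}. The only place where one might want to be careful is implicitly checking that $u^-$ is genuinely a $c$-concave function for $c=h$ (so that $\mathrm{Sing}^c(u^-)$ is defined) — but this is built into the setup, since on the Aubry set the Peierls' barrier provides the admissible Kantorovich pair $(u^-,u^+)$ used in the last step of Proposition~\ref{pro:static_class}, and the whole weak KAM machinery ensures $u^-=T^-\circ T^+ u^-$, which by Theorem~\ref{thm:commutator} is equivalent to $c$-concavity.
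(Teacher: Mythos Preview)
Your proposal is correct and follows essentially the same approach as the paper, which simply states that the corollary is immediate from Proposition~\ref{pro:static_class} and the definition of $\mathrm{Sing}^c(u^-)$. Your extra remark on the $c$-concavity of $u^-$ is a reasonable sanity check but not something the paper spells out.
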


\section{Application of abstract Lax-Oleinik operators to optimal transport}

\subsection{An alternative formulation of Kantorovich-Rubinstein duality}

For any Polish space $X$ let $\mathscr{P}(X)$ be the space of Borel probability measures on $X$. Given two Polish spaces $X$ and $Y$, we call $\pi\in\mathscr{P}(X\times Y)$ a transport plan between $\mu\in\mathscr{P}(X)$ and $\nu\in\mathscr{P}(X)$ if
\begin{align*}
	(p_X)_{\#}\pi=\mu,\qquad (p_Y)_{\#}\pi=\nu
\end{align*}
where $p_X$ and $p_Y$ are the projection from $X\times Y$ to $X$ and $Y$ respectively. We denote by $\Gamma(\mu,\nu)$ the set of all transport plans between $\mu$ and $\nu$.

For any Borel cost function $c:X\times Y\to\overline{\R}$ uniformly bounded below, we consider the following problem of Kantorovich
\begin{equation}\label{eq:Kantorovich}\tag{K}
	\mathcal{C}(\mu,\nu):=\inf\Big\{\int_{X\times Y}c(x,y)\ d\pi: \pi\in\Gamma(\mu,\nu)\Big\}
\end{equation}
We denote by $\Gamma_o(\mu,\nu)$ the set of optimal transport plans.

To formulate the important theorem of Kantorovich-Rubinstein duality, we introduce 
\begin{align*}
	I_c:=&\,\{(\phi,\psi): \psi(y)-\phi(x)\leqslant c(x,y)\ \text{for all}\ x\in X, y\in Y\},\\
	K_c:=&\,\{(\phi,\psi)\in I_c: \psi=T^-\phi, \phi=T^+\psi\}.
\end{align*}
It is obvious that
\begin{align*}
	I_c=\{(\phi,\psi):\psi\leqslant T^-\phi, \phi\geqslant T^+\psi\}\supset K_c.
\end{align*}
In the literature, the elements in $K_c$ is called \emph{admissible Kantorovich pair}.

\begin{The}[Kantorovich-Rubinstein duality]\label{thm:KBD}
\begin{equation}\label{eq:KB}\tag{KR}
	\mathcal{C}(\mu,\nu)=\sup_{(\phi,\psi)\in I_c}\Big\{\int_Y\psi\ d\nu-\int_X\phi\ d\mu\Big\}=\sup_{(\phi,\psi)\in K_c}\Big\{\int_Y\psi\ d\nu-\int_X\phi\ d\mu\Big\}.
\end{equation}
\end{The}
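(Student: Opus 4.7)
My approach is to split \eqref{eq:KB} into three pieces: an elementary weak-duality inequality, a Lax-Oleinik post-processing step that upgrades any $I_c$-pair into a $K_c$-pair, and the deep strong-duality inequality, which I would import from the classical Kantorovich theory rather than reprove.

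\textbf{Step 1 (weak inequality).} For any $(\phi,\psi)\in I_c$ and any $\pi\in\Gamma(\mu,\nu)$, integrating the pointwise inequality $\psi(y)-\phi(x)\leqslant c(x,y)$ against $\pi$ and using that the marginals of $\pi$ are $\mu,\nu$ gives
\begin{equation*}
\int_Y\psi\,d\nu-\int_X\phi\,d\mu=\int_{X\times Y}(\psi(y)-\phi(x))\,d\pi\leqslant\int_{X\times Y}c(x,y)\,d\pi.
\end{equation*}
Taking the infimum over $\pi$ on the right and the supremum over $(\phi,\psi)\in I_c$ on the left, together with the obvious inclusion $K_c\subset I_c$, yields
\begin{equation*}
\sup_{(\phi,\psi)\in K_c}\Big\{\int_Y\psi\,d\nu-\int_X\phi\,d\mu\Big\}\leqslant\sup_{(\phi,\psi)\in I_c}\Big\{\int_Y\psi\,d\nu-\int_X\phi\,d\mu\Big\}\leqslant\mathcal{C}(\mu,\nu).
\end{equation*}

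\textbf{Step 2 (lifting from $I_c$ to $K_c$).} Given any $(\phi,\psi)\in I_c$, set $\phi':=T^+\psi$ and $\psi':=T^-\phi'=T^-T^+\psi$. From $(\phi,\psi)\in I_c$ one has $\phi\geqslant T^+\psi=\phi'$, while Lemma \ref{lem:monotone1}(1) gives $\psi'=T^-T^+\psi\geqslant\psi$, so
\begin{equation*}
\int_Y\psi'\,d\nu-\int_X\phi'\,d\mu\geqslant\int_Y\psi\,d\nu-\int_X\phi\,d\mu.
\end{equation*}
Moreover $(\phi',\psi')\in I_c$ because $\psi'=T^-\phi'\leqslant\phi'(x)+c(x,y)$ pointwise, and Lemma \ref{lem:monotone1}(2) yields $T^+\psi'=T^+T^-T^+\psi=T^+\psi=\phi'$. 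Hence $(\phi',\psi')\in K_c$, so the two suprema in \eqref{eq:KB} are in fact equal. This part of the theorem is exactly what the commutator identities of Section 3 are tailor-made to deliver.

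\textbf{Step 3 (strong inequality) and the main obstacle.} What remains is the genuinely non-trivial inequality
\begin{equation*}
\mathcal{C}(\mu,\nu)\leqslant\sup_{(\phi,\psi)\in I_c}\Big\{\int_Y\psi\,d\nu-\int_X\phi\,d\mu\Big\}.
\end{equation*}
Under the standing assumption that $X,Y$ are Polish and $c$ is Borel bounded below, this is the classical Kantorovich-Rubinstein duality on Polish spaces; I would invoke it from \cite{Villani_book2009} (Theorem 5.10). Its proof proceeds by applying Fenchel-Rockafellar/Hahn-Banach to the convex cone of admissible pairs in $C_b(X)\times C_b(Y)$ and then approximating a general Borel cost $c$ by an increasing sequence of bounded continuous costs (or using Choquet's capacitability/$\sigma$-compact exhaustions) so as to produce nearly optimal \emph{integrable} potentials. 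This is the real analytic obstacle, since one must simultaneously control measurability, $\mu$- and $\nu$-integrability, and lower-semicontinuity of $c$; the Lax-Oleinik formalism contributes nothing here beyond the clean description of the optimizers once they exist. I would therefore spend the bulk of the written proof on Steps 1-2, where the abstract Lax-Oleinik viewpoint is genuinely informative, and quote Step 3 as the classical input.
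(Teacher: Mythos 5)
The paper does not actually prove Theorem \ref{thm:KBD}; it is stated as classical and then used as a black box in the proof of Theorem \ref{thm:K+-}, so there is no ``paper's proof'' to compare yours against. Your reconstruction is correct and sensibly organized. Step~1 (weak duality) is the standard one-line computation. Step~3 correctly identifies the genuine analytic content as the classical strong duality on Polish spaces, which it is entirely appropriate to cite. Step~2 is where you contribute something the paper leaves implicit: you use the idempotence identity $T^+\circ T^-\circ T^+=T^+$ from Lemma~\ref{lem:monotone1}(2) to upgrade any $(\phi,\psi)\in I_c$ to $(\phi',\psi')=(T^+\psi,\,T^-T^+\psi)\in K_c$ with $\phi'\leqslant\phi$ and $\psi'\geqslant\psi$, showing the two suprema coincide. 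This is the usual ``double $c$-convexification'' trick expressed cleanly in the abstract Lax-Oleinik notation, and it is one of the things this formalism is designed to make transparent. The only caveat --- which you implicitly defer to Step~3, and which is fine to defer --- is measurability and integrability of the upgraded potentials: $\int_X T^+\psi\,d\mu$ could a priori be $-\infty$ and one must check $T^+\psi$ is Borel so the integral even makes sense. Since the pointwise bounds $\phi'\leqslant\phi$ and $\psi'\geqslant\psi$ keep the objective from decreasing in the extended reals, and since the classical strong-duality theorem you cite already produces integrable conjugate pairs, this does not create a gap.
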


For our purpose, we try to reformulate Theorem \ref{thm:KBD} and connect it directly to the Lax-Oleinik operators $T^{\pm}$. Let $\phi:X\to\R$ and $\psi:Y\to\R$. Set $c_\phi(x,y):=\phi(x)+c(x,y)$ and $c^{\psi}(x,y)=\psi(y)-c(x,y)$. For any $\mu\in\mathscr{P}(X)$ and $\nu\in\mathscr{P}(Y)$, we try to find a function $\phi:X\to\R$ and a function $\psi:Y\to\R$ such that
\begin{align}
	\int_YT^-\phi\ d\nu=&\,\inf_{\pi\in\Gamma(\mu,\nu)}\int_{X\times Y}c_\phi(x,y)\ d\pi,\label{eq:K-}\tag{K$^-$}\\
	\int_YT^+\psi\ d\mu=&\,\sup_{\pi\in\Gamma(\mu,\nu)}\int_{X\times Y}c^\psi(x,y)\ d\pi.\label{eq:K+}\tag{K$^+$}
\end{align}

\begin{The}\label{thm:K+-}
\hfill
\begin{enumerate}[\rm (1)]
	\item If $\phi:X\to\R$ is a solution of \eqref{eq:K-}, then $(\phi,T^-\phi)\in I_c$ is a solution of \eqref{eq:KB}. Conversely, if $(\phi,\psi)\in I_c$ is a solution of \eqref{eq:KB}, then $\phi$ solves \eqref{eq:K-}.
	\item If $\psi:Y\to\R$ is a solution of \eqref{eq:K+}, then $(T^+\psi,\psi)\in I_c$ is a solution of \eqref{eq:KB}. Conversely, if $(\phi,\psi)\in I_c$ is a solution of \eqref{eq:KB}, then $\psi$ solves \eqref{eq:K+}.
\end{enumerate}	
\end{The}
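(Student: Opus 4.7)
The plan is to reduce both problems \eqref{eq:K-} and \eqref{eq:K+} to the Kantorovich-Rubinstein duality \eqref{eq:KB} by exploiting the fact that elements of $\Gamma(\mu,\nu)$ have prescribed marginals. The key algebraic observation is that for any $\pi\in\Gamma(\mu,\nu)$,
\begin{align*}
    \int_{X\times Y}c_\phi(x,y)\,d\pi=\int_X\phi\,d\mu+\int_{X\times Y}c(x,y)\,d\pi,\qquad \int_{X\times Y}c^\psi(x,y)\,d\pi=\int_Y\psi\,d\nu-\int_{X\times Y}c(x,y)\,d\pi.
\end{align*}
Taking infimum (resp. supremum) over $\pi\in\Gamma(\mu,\nu)$, \eqref{eq:K-} becomes the identity $\int_YT^-\phi\,d\nu-\int_X\phi\,d\mu=\mathcal{C}(\mu,\nu)$, and \eqref{eq:K+} becomes $\int_XT^+\psi\,d\mu\mathrel{-}\int_Y\psi\,d\nu=-\mathcal{C}(\mu,\nu)$, i.e.\ $\int_Y\psi\,d\nu-\int_XT^+\psi\,d\mu=\mathcal{C}(\mu,\nu)$.

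Next I would observe that the Lax-Oleinik construction always produces admissible pairs: by definition of $T^-$ we have $T^-\phi(y)\leqslant\phi(x)+c(x,y)$, hence $(\phi,T^-\phi)\in I_c$; symmetrically $(T^+\psi,\psi)\in I_c$. Combined with \eqref{eq:KB}, this gives the inequalities
\begin{align*}
    \int_YT^-\phi\,d\nu-\int_X\phi\,d\mu\leqslant\mathcal{C}(\mu,\nu),\qquad \int_Y\psi\,d\nu-\int_XT^+\psi\,d\mu\leqslant\mathcal{C}(\mu,\nu),
\end{align*}
for arbitrary $\phi,\psi$. Using the reduction in the first paragraph, the forward direction of (1) is immediate: if $\phi$ solves \eqref{eq:K-}, the corresponding inequality above is an equality, so $(\phi,T^-\phi)$ realises the supremum in \eqref{eq:KB}. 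The forward direction of (2) is handled identically with $(T^+\psi,\psi)$.

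For the converse in (1), let $(\phi,\psi)\in I_c$ be a solution of \eqref{eq:KB}. The defining inequality of $I_c$ yields $\psi(y)\leqslant\phi(x)+c(x,y)$ for every $x$, so upon taking the infimum over $x\in X$ one obtains $\psi\leqslant T^-\phi$ pointwise. Chaining,
\begin{align*}
    \mathcal{C}(\mu,\nu)=\int_Y\psi\,d\nu-\int_X\phi\,d\mu\leqslant\int_YT^-\phi\,d\nu-\int_X\phi\,d\mu\leqslant\mathcal{C}(\mu,\nu),
\end{align*}
which forces equality throughout, giving \eqref{eq:K-} after the reduction of the first paragraph. The converse in (2) is symmetric, using $\phi\geqslant T^+\psi$ pointwise (take the supremum over $y$ in the $I_c$-condition).

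I do not anticipate a genuine obstacle: the whole argument is essentially an accounting of the two reductions (marginal identity and monotonicity on $I_c$), once \eqref{eq:KB} is in hand. The only mild subtlety is the measurability/integrability of $T^-\phi$ against $\nu$ (resp.\ $T^+\psi$ against $\mu$), which is implicit in the standing assumption that the infima and suprema defining $T^\pm$ are attained and that $\phi,\psi$ are sufficiently regular for the KR supremum to be restricted to $K_c$ without loss. Everything else is a one-line consequence of the duality theorem.
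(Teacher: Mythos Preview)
Your proposal is correct and follows essentially the same approach as the paper: both arguments hinge on the marginal identity $\int c_\phi\,d\pi=\int\phi\,d\mu+\int c\,d\pi$, the observation $(\phi,T^-\phi)\in I_c$ yielding $\int T^-\phi\,d\nu-\int\phi\,d\mu\leqslant\mathcal{C}(\mu,\nu)$, and for the converse the pointwise bound $\psi\leqslant T^-\phi$ to upgrade a maximizing pair $(\phi,\psi)$ to $(\phi,T^-\phi)$. Your write-up is slightly more systematic in separating the ``reduction'' step from the inequality chain, but the logical content is identical.
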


\begin{proof}
For any $\phi:X\to\R$ we observe that
\begin{equation}\label{eq:K-_ineq}
	\int_YT^-\phi\ d\nu\leqslant\inf_{\pi\in\Gamma(\mu,\nu)}\int_{X\times Y}c_\phi(x,y)\ d\pi,
\end{equation}
since for any $\pi\in\Gamma(\mu,\nu)$,
\begin{align*}
	\int_YT^-\phi\ d\nu=\int_YT^-\phi\ d\pi\leqslant\int_{X\times Y}\phi(x)+c(x,y)\ d\pi(x,y)=\int_X\phi\ d\mu+\int_{X\times Y}c(x,y)\ d\pi.
\end{align*}
Thus inequality \eqref{eq:K-_ineq} reads
\begin{align*}
	\int_YT^-\phi\ d\nu-\int_X\phi\ d\mu\leqslant\mathcal{C}(\mu,\nu).
\end{align*}
Since $\phi$ is a solution of \eqref{eq:K-}, then the inequality above is indeed an equality.  Notice $(\phi,T^-\phi)\in I_c$ since $T^-\phi(y)-\phi(x)\leqslant c(x,y)$. This implies $(\phi,T^-\phi)$ is a solution of \eqref{eq:KB} by Theorem \ref{thm:KBD}.

Conversely, suppose $(\phi,\psi)\in I_c$ is a solution of \eqref{eq:KB}. Then
\begin{align*}
	\int_Y\psi\ d\nu\geqslant\int_X\phi\ d\mu+\int_{X\times Y}c(x,y)\ d\pi=\int_{X\times Y}c_\phi(x,y)\ d\pi.
\end{align*}
Since $\psi\leqslant T^-\phi$, then $(\phi,T^-\phi)$ is also a solution of \eqref{eq:KB}. Together with \eqref{eq:K-_ineq}, $\phi$ is a solution of \eqref{eq:K-}. This completes the proof of (1). The proof (2) is similar.
\end{proof}

\subsection{Mass transport aspects of Lax-Oleinik operator}

\begin{Lem}\label{lem:proj}
Assume that $\pi\in\mathscr{P}(X\times Y)$, then
\begin{enumerate}[\rm (1)]
	\item $p_x(\supp(\pi))\subset\supp\,((p_x)_\#\pi)$;
	\item $p_x(\supp(\pi))=\supp((p_x)_\#\pi)$ if $Y$ is compact. 
\end{enumerate}
\end{Lem}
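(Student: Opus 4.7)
The plan is to handle (1) directly from the definitions and (2) by contradiction using compactness of $Y$ together with a finite-subcover argument.

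For part (1), I would take an arbitrary $x_0\in p_x(\supp(\pi))$, so that there exists $y_0\in Y$ with $(x_0,y_0)\in\supp(\pi)$. To verify $x_0\in\supp((p_x)_\#\pi)$, I would pick an arbitrary open neighborhood $U\subset X$ of $x_0$ and use the identity
\begin{align*}
	(p_x)_\#\pi(U)=\pi(p_x^{-1}(U))=\pi(U\times Y).
\end{align*}
Since $U\times Y$ is an open neighborhood of $(x_0,y_0)\in\supp(\pi)$, it has positive $\pi$-measure, which forces $(p_x)_\#\pi(U)>0$. As $U$ was arbitrary, $x_0\in\supp((p_x)_\#\pi)$.

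For part (2), I would argue by contradiction. Suppose $x_0\in\supp((p_x)_\#\pi)$ but $x_0\notin p_x(\supp(\pi))$, i.e., $(x_0,y)\notin\supp(\pi)$ for every $y\in Y$. For each such $y$, basis-product open neighborhoods give $U_y\subset X$ and $V_y\subset Y$ with $(x_0,y)\in U_y\times V_y$ and $\pi(U_y\times V_y)=0$. The collection $\{V_y\}_{y\in Y}$ is an open cover of $Y$, so by compactness I can extract a finite subcover $V_{y_1},\ldots,V_{y_n}$. Setting $U:=\bigcap_{i=1}^n U_{y_i}$, which is still an open neighborhood of $x_0$, I get
\begin{align*}
	U\times Y\subset\bigcup_{i=1}^n(U\times V_{y_i})\subset\bigcup_{i=1}^n(U_{y_i}\times V_{y_i}),
\end{align*}
whence $(p_x)_\#\pi(U)=\pi(U\times Y)\leqslant\sum_{i=1}^n\pi(U_{y_i}\times V_{y_i})=0$, contradicting $x_0\in\supp((p_x)_\#\pi)$.

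The only mildly subtle point is ensuring that the neighborhoods used to witness $(x_0,y)\notin\supp(\pi)$ can be chosen of product form; this is immediate because product open sets form a basis for the product topology on $X\times Y$. The role of compactness of $Y$ is exactly to upgrade the pointwise vanishing on product neighborhoods to a uniform statement over a single neighborhood $U$ of $x_0$, which is the only obstacle to making (2) work without extra hypotheses.
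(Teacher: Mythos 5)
Your proof is correct and takes essentially the same approach as the paper: part (1) via the identity $(p_x)_\#\pi(U)=\pi(U\times Y)$, and part (2) by contradiction using a finite subcover of $Y$. The only cosmetic difference is that the paper works with metric balls and a minimum radius, while you use generic basic product open sets and a finite intersection; these are interchangeable.
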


\begin{Rem}
The compactness of $Y$ in Lemma \ref{lem:proj} (2) is essential. Assume that $X=Y=\R$ and $\pi=\Sigma_{i=1}^{+\infty}2^{-i}\delta_{(\frac{1}{i},i)}\in\mathscr{P}(\R^2)$, then
\begin{align*}
\supp(\pi)=\bigg\{\big(\frac{1}{i},i\big): i\in\N\bigg\},\qquad p_x(\supp\,(\pi))=\bigg\{\frac{1}{i}: i\in\N\bigg\}.
\end{align*}
However,
\begin{align*}
(p_x)_\#\pi=\Sigma_{i=1}^{+\infty}2^{-i}\delta_{\frac{1}{i}},\qquad \supp\,((p_x)_\#\pi)=\bigg\{\frac{1}{i}: i\in\N\bigg\}\cup\{0\}.
\end{align*}
\end{Rem}

\begin{Lem}\label{lem:K-}
Suppose $\mu\in\mathscr{P}(X)$ and $\nu\in\mathscr{P}(Y)$. Then $\phi\in\emph{Lip}_b(X)$ is a solution of \eqref{eq:K-} if and only if for all $\pi\in\Gamma_o(\mu,\nu)$,
\begin{equation}\label{eq:K-equiv}
	\supp(\pi)\subset\{(x,y)\in X\times Y: x=\arg\min\{\phi(\cdot)+c(\cdot,y)\}\}=\{(x,y): y\in\partial_c\phi(x)\}.
\end{equation}
\end{Lem}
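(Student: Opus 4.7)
The plan is to recast \eqref{eq:K-} in terms of the Kantorovich cost $\mathcal C(\mu,\nu)$ and then identify the equality cases of a simple chain of inequalities. The algebraic identity $\int c_\phi\,d\pi=\int\phi\,d\mu+\int c\,d\pi$, valid for any $\pi\in\Gamma(\mu,\nu)$, shows that \eqref{eq:K-} is equivalent to the single scalar equality $\int_Y T^-\phi\,d\nu-\int_X\phi\,d\mu=\mathcal C(\mu,\nu)$.

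The key observation is that $T^-\phi(y)-\phi(x)\leqslant c(x,y)$ holds pointwise, so for every $\pi\in\Gamma(\mu,\nu)$,
$$\int_Y T^-\phi\,d\nu-\int_X\phi\,d\mu=\int_{X\times Y}\bigl[T^-\phi(y)-\phi(x)\bigr]\,d\pi\leqslant\int_{X\times Y}c\,d\pi,$$
with equality if and only if the nonnegative integrand $c(x,y)-(T^-\phi(y)-\phi(x))$ vanishes $\pi$-almost everywhere. Its zero set is precisely $S:=\{(x,y):T^-\phi(y)=\phi(x)+c(x,y)\}$, which by definition of $\partial_c\phi$ coincides with $\{(x,y):y\in\partial_c\phi(x)\}$, i.e.\ the set appearing in \eqref{eq:K-equiv}.

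For the forward direction, if $\phi$ solves \eqref{eq:K-} then the scalar equality above holds, so picking any $\pi\in\Gamma_o(\mu,\nu)$ (for which $\int c\,d\pi=\mathcal C(\mu,\nu)$) the chain collapses and $\pi$ is concentrated on $S$. The main obstacle is to upgrade this ``$\pi$-a.e.'' statement to the stronger ``$\supp(\pi)\subset S$'' claimed in the lemma. This rests on the fact that $T^-\phi$ is upper semicontinuous (being the infimum of the continuous family $y\mapsto\phi(x)+c(x,y)$ indexed by $x$), while $\phi\in\mathrm{Lip}_b(X)$ and $c$ are continuous, so $(x,y)\mapsto T^-\phi(y)-\phi(x)-c(x,y)$ is USC and pointwise $\leqslant 0$; its zero set $S$ is therefore closed, and the $\pi$-a.e.\ concentration forces $\supp(\pi)\subset S$.

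Conversely, pick any $\pi\in\Gamma_o(\mu,\nu)$, which is nonempty under the standing assumptions on $c$ and the Polish spaces. The hypothesis $\supp(\pi)\subset S$ gives $T^-\phi(y)-\phi(x)=c(x,y)$ $\pi$-a.e., so the chain of inequalities above is an equality, producing $\int_Y T^-\phi\,d\nu-\int_X\phi\,d\mu=\int c\,d\pi=\mathcal C(\mu,\nu)$, which is the scalar reformulation of \eqref{eq:K-}. Throughout, the Lipschitz and boundedness of $\phi$ keeps all integrals finite and $T^-\phi$ well-behaved, and this integrability together with the closedness argument above are the only two substantive points in an otherwise formal proof.
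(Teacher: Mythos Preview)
Your proof is correct and follows essentially the same approach as the paper's: both reduce \eqref{eq:K-} to the vanishing of $\int F_\phi\,d\pi$ with $F_\phi(x,y)=\phi(x)+c(x,y)-T^-\phi(y)\geqslant0$, and then use the lower semicontinuity of $F_\phi$ (equivalently, the closedness of its zero set $S$) to pass from $\pi$-a.e.\ vanishing to $\supp(\pi)\subset S$. The only cosmetic difference is that the paper argues the support inclusion by an explicit contradiction and handles the converse via disintegration, whereas you phrase the first as ``closed set plus $\pi$-a.e.\ concentration'' and the second via the direct equality on $\supp(\pi)$; both are equivalent.
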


\begin{proof}
For $\phi\in\text{\rm Lip}_b(X)$ let
\begin{align*}
	F_\phi(x,y):=\phi(x)+c(x,y)-T^-\phi(y),\qquad(x,y)\in X\times Y
\end{align*}
By the definition of $T^-\phi$, $F_\phi(x,y)\geqslant 0$ for all $(x,y)\in X\times Y$. $T^-\phi$ is upper semi-continuous, and then $F_\phi(x,y)$ is a lower semi-continuous. 

If \eqref{eq:K-} holds true, then for any optimal plan $\pi\in\Gamma_o(\mu,\nu)$, 
\begin{equation}\label{eq:integrand}
	\int_{X\times Y}F_\phi(x,y)\,d\pi=0. 
\end{equation}
Let $\pi\in\Gamma_o(\mu,\nu)$ and $(x_0,y_0)\in\supp(\pi)\setminus\{(x,y)|y\in\partial_c\phi(x)\}$. Thus $F_\phi(x_0,y_0)>0$. Due to the lower semi-continuity of $F_\phi$, for arbitrary $\varepsilon\in (0,F_\phi(x_0,y_0))$, there exists $\delta>0$ such that if $(x,y)\in B((x_0,y_0),\delta)$ then
\begin{align*}
	F_\phi(x,y)>F_\phi(x_0,y_0)-\varepsilon>0.
\end{align*}
Since $\pi(B((x_0,y_0),\delta))>0$, it follows
\begin{align*}
	\int_{X\times Y}F_\phi(x,y)\,d\pi\geqslant\int_{B((x_0,y_0),\delta)}F_\phi(x,y)\,d\pi>\int_{B((x_0,y_0),\delta)}F_\phi(x_0,y_0)-\varepsilon\,d\pi>0,
\end{align*}
which contradicts to \eqref{eq:integrand}.

Conversely, suppose \eqref{eq:K-equiv} holds true for all $\pi\in\Gamma_o(\mu,\nu)$. According to the disintegration of the measure $\pi=\int_Y\pi_y\,d\nu$, we obtain
\begin{align*}
	\int_{X\times Y} \phi(x)+c(x,y)\,d\pi &=\int_{X\times Y} [\phi(x)+c(x,y)]\cdot\mathbf{1}_{\{(x,y):y\in\partial_c\phi(x)\}}(x,y)\,d\pi\\
		&=\int_Y\int_{\{(x,y):y\in\partial_c\phi(x)\}}[\phi(x)+c(x,y)]\,d\pi_yd\nu.
\end{align*}
Recall that $T^-\phi(y)\equiv\phi(x)+c(x,y)$ for all $(x,y)$ such that $y\in\partial_c\phi(x)$. Thus,
\begin{align*}
	\int_{X\times Y} \phi(x)+c(x,y)\,d\pi=\int_Y\int_{\{(x,y):y\in\partial_c\phi(x)\}}\phi(x)+c(x,y)\,d\pi_yd\nu=\int_YT^-\phi(y)\,d\nu,
\end{align*}
which means $\phi$ is a solution of \eqref{eq:K-}.
\end{proof}

From now, we assume that $X=Y=M$, a connected and closed manifold of dimension $d$. For technical reason we suppose that $TM$ and $T^*M$, the associated tangent space and cotangent space of $M$ respectively, has a trivialization. 
We consider the cost function as $c(x,y)=h(0,t,x,y)$ for some fixed $t>0$. Recall that $T^-=T_0^t$ and $T^+=\breve T_0^t$.

\begin{Pro}\label{pro:K-}
Let $y_0\in M$, $\phi\in\emph{Lip}(M)$ and $\nu=\delta_{y_0}$, then \eqref{eq:K-equiv} holds if and only if
\begin{equation}\label{eq:supp mu}
	\supp\,(\mu)\subset\{x: y_0\in\partial_c\phi(x)\}.
\end{equation}
In particular, if $\phi$ is $c$-convex, then $\{x: y_0\in\partial_c\phi(x)\}=\partial^cT^-\phi(y_0)$.
\end{Pro}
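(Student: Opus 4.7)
\textbf{Proof proposal for Proposition \ref{pro:K-}.}

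The plan is to exploit the fact that $\nu=\delta_{y_0}$ collapses the set of transport plans to a single element, after which both halves of the claim reduce to unwinding the definitions of $T^\pm$ and $\partial_c,\partial^c$, using Theorem \ref{thm:commutator} for the $c$-convex case.

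First I would observe that when $\nu=\delta_{y_0}$, the set $\Gamma(\mu,\nu)$ contains exactly one element, namely $\pi=\mu\otimes\delta_{y_0}$, and in particular $\Gamma_o(\mu,\nu)=\{\pi\}$. A short direct computation (checking both inclusions) gives
\[
\supp(\pi)=\supp(\mu)\times\{y_0\}.
\]
Substituting this into \eqref{eq:K-equiv} turns the condition $\supp(\pi)\subset\{(x,y): y\in\partial_c\phi(x)\}$ into: for every $x\in\supp(\mu)$ one has $y_0\in\partial_c\phi(x)$, which is precisely \eqref{eq:supp mu}. This proves the first equivalence.

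For the ``In particular'' assertion, I would work out both inclusions between $\{x:y_0\in\partial_c\phi(x)\}$ and $\partial^cT^-\phi(y_0)$ from the definitions. By definition, $y_0\in\partial_c\phi(x)$ iff
\[
\phi(x)+c(x,y_0)=T^-\phi(y_0),
\]
while $x\in\partial^cT^-\phi(y_0)$ iff
\[
T^+\circ T^-\phi(x)=T^-\phi(y_0)-c(x,y_0).
\]
The inclusion $\{x:y_0\in\partial_c\phi(x)\}\subset\partial^cT^-\phi(y_0)$ holds in full generality: if $\phi(x)+c(x,y_0)=T^-\phi(y_0)$, then
\[
\phi(x)=T^-\phi(y_0)-c(x,y_0)\leqslant T^+\circ T^-\phi(x)\leqslant\phi(x),
\]
where the last inequality is Lemma \ref{lem:monotone1}(1), so equality holds throughout and $y_0$ realizes the supremum in $T^+\circ T^-\phi(x)$.

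The reverse inclusion uses $c$-convexity: by Theorem \ref{thm:commutator}, $\phi$ is $c$-convex iff $T^+\circ T^-\phi=\phi$, so if $x\in\partial^cT^-\phi(y_0)$ then
\[
\phi(x)=T^+\circ T^-\phi(x)=T^-\phi(y_0)-c(x,y_0),
\]
which rearranges to $\phi(x)+c(x,y_0)=T^-\phi(y_0)$, i.e. $y_0\in\partial_c\phi(x)$. There is no real obstacle here; the main care needed is verifying $\supp(\mu\otimes\delta_{y_0})=\supp(\mu)\times\{y_0\}$ (since the analogous identity can fail for general product-like measures, as the remark after Lemma \ref{lem:proj} cautions), but the Dirac factor makes the argument immediate via neighborhoods of the form $U\times V$ with $y_0\in V$.
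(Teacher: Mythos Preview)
Your proof is correct and is actually more direct than the paper's. The paper does not use the observation that $\Gamma(\mu,\delta_{y_0})$ is the singleton $\{\mu\otimes\delta_{y_0}\}$; instead it works with an arbitrary $\pi\in\Gamma_o(\mu,\nu)$ and invokes Lemma~\ref{lem:proj} on projections of supports (in particular part~(2), which needs compactness of $M$) together with Lemma~\ref{lem:K-} to pass back and forth between $\supp(\pi)$ and $\supp(\mu)$. Your route bypasses that machinery entirely: once you know $\pi=\mu\otimes\delta_{y_0}$ and $\supp(\pi)=\supp(\mu)\times\{y_0\}$, the equivalence is immediate. What the paper's approach buys is that the same projection lemma is reused later (Proposition~\ref{pro:exist} and its corollaries), so the infrastructure is already in place; what your approach buys is a self-contained argument that does not rely on compactness of the second factor. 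Finally, the paper's proof omits the ``In particular'' clause altogether, whereas you supply a clean verification via Lemma~\ref{lem:monotone1}(1) and Theorem~\ref{thm:commutator}.
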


\begin{proof}
Given $\in\text{Lip}(M)$ and $\pi\in\Gamma_o(\mu,\nu)$ such that \eqref{eq:K-equiv} holds. Since $\nu=\delta_{y_0}$, by Lemma \ref{lem:proj} we have
\begin{align*}
	p_y(\supp\,(\pi))\subset\supp\,((p_y)_\#\pi)=\supp\,(\delta_{y_0})=\{y_0\}.
\end{align*}
Together with Lemma \ref{lem:K-}, $\supp\,(\pi)\subset\{(x,y_0): y_0\in\partial_c\phi(x)\}$. Then, owing to the compactness of $M$, Lemma \ref{lem:proj} (2) implies that 
\begin{align*}
	\supp(\mu)=\supp\,((p_x)_\#\pi)\subset p_x(\{(x,y_0): y_0\in\partial_c\phi(x)\})
		=\{x: y_0\in\partial_c\phi(x)\}.
\end{align*}

Conversely, if \eqref{eq:supp mu} holds, according to Lemma \ref{lem:proj},
\begin{align*}
	p_x(\supp\,(\pi))\subset\supp((p_x)_\#\pi)=\supp\,(\mu)\subset\{x: y_0\in\partial_c\phi(x)\},
\end{align*}
which implies that
\begin{align*}
	\supp(\pi)\subset p_x^{-1}(\{x: y_0\in\partial_c\phi(x)\})=\{x: y_0\in\partial_c\phi(x)\}\times M.
\end{align*}
On the other hand,
\begin{align*}
	p_y(\supp\,(\pi))\subset\supp((p_y)_\#\pi)=\supp(\delta_{y_0})=\{y_0\}.
\end{align*}
Thus, from the analysis above,
\begin{align*}
	\supp(\pi)\subset\{(x,y_0): y_0\in\partial_c\phi(x)\}\subset\{(x,y): y\in\partial_c\phi(x)\}
\end{align*}
and this completes the proof together with Lemma \ref{lem:K-}.
\end{proof}

\begin{Pro}\label{pro:exist}
Assume that $y_0\in M$, $\nu=\delta_{y_0}$, $\phi\in\emph{Lip}(M)$ and $\{\Phi_H^{t_1,t_2}\}_{t_1,t_2\in\R}$ is the associated Hamiltonian flow from $t_1$ to $t_2$. 
\begin{enumerate}[\rm (1)]
	\item Then for any $\rho\in\mathscr{P}(\R^d)$ with $\supp(\rho)\subset D^*T^-\phi(y_0)$, there exists $\mu_\rho:=(p_x\circ\Phi_{H}^{t,0})_\#(\delta_{y_0}\times\rho)$, which makes \eqref{eq:K-} holds true;
	\item For each $\mu\in\mathscr P(M)$ which satisfies \eqref{eq:K-}, there exists $\rho_\mu\in\mathscr P(\R^d)$ with $\supp(\rho_\mu)\subset D^+T^-\phi(y_0)$, such that $\mu=(p_x\circ\Phi_H^{t,0})_{\#}(\delta_{y_0}\times\rho_\mu)$.
\end{enumerate}
\end{Pro}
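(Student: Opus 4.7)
The plan is to work through the Hamiltonian-flow map $F:T_{y_0}^*M\to M$ defined by $F(p):=p_x\circ\Phi_H^{t,0}(y_0,p)$, which sends a covector $p$ at $y_0$ to the initial point at time $0$ of the Hamiltonian trajectory ending at $(y_0,p)$ at time $t$; using the trivialization of $T^*M$ we identify $T^*_{y_0}M$ with $\R^d$, and the product-measure identity $\mu_\rho=F_\#\rho$ is then immediate by Fubini. Since Proposition \ref{pro:K-} characterises \eqref{eq:K-} (with $\nu=\delta_{y_0}$) by the support condition $\supp(\mu)\subset\{x:y_0\in\partial_c\phi(x)\}$, both items reduce to describing how $F$ transports $D^*T^-\phi(y_0)$ and $D^+T^-\phi(y_0)$ into, and onto, the contact set $\{x:y_0\in\partial_c\phi(x)\}$.

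The central lemma I would establish first is: (i) $F(D^*T^-\phi(y_0))\subset\{x:y_0\in\partial_c\phi(x)\}$, and (ii) for every $x$ with $y_0\in\partial_c\phi(x)$ there exists $p\in D^+T^-\phi(y_0)$ with $F(p)=x$. For (i), if $p\in D^*T^-\phi(y_0)$, choose $y_k\to y_0$ with $T^-\phi$ differentiable at $y_k$ and $DT^-\phi(y_k)\to p$; standard Tonelli theory yields a unique minimizer $x_k=F(DT^-\phi(y_k))$ satisfying $T^-\phi(y_k)=\phi(x_k)+h(0,t,x_k,y_k)$, and continuity of $F$ and $h$ together with compactness of $M$ pass this to the limit to give $T^-\phi(y_0)=\phi(F(p))+h(0,t,F(p),y_0)$. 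For (ii), given $x$ with $y_0\in\partial_c\phi(x)$, pick a minimizer $\gamma\in\Gamma_{x,y_0}^{0,t}$ and set $p:=L_v(t,y_0,\dot\gamma(t))$; then $F(p)=x$ by definition of the backward flow, while the bound $T^-\phi(y)\leqslant\phi(x)+h(0,t,x,y)$ combined with the local semiconcavity of $h(0,t,x,\cdot)$ at $y_0$ (whose proximal superdifferential contains $p$ by Fermat's rule) yields $p\in D^+T^-\phi(y_0)$.

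For part (1), since $\supp(\rho)\subset D^*T^-\phi(y_0)$ is compact and $F$ is continuous, $\supp(\mu_\rho)=\supp(F_\#\rho)\subset F(\supp(\rho))\subset\{x:y_0\in\partial_c\phi(x)\}$ by (i), so Proposition \ref{pro:K-} delivers \eqref{eq:K-}. For part (2), given $\mu$ satisfying \eqref{eq:K-}, Proposition \ref{pro:K-} gives $\supp(\mu)\subset\{x:y_0\in\partial_c\phi(x)\}$; I would then apply the Kuratowski--Ryll-Nardzewski measurable selection theorem to the set-valued map $x\mapsto F^{-1}(x)\cap D^+T^-\phi(y_0)$, which is non-empty-valued by (ii) and compact-valued with closed graph by continuity of $F$ and by the compactness of $D^+T^-\phi(y_0)$ (itself inherited from the semiconcavity of $T^-\phi$). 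This yields a Borel section $s:\supp(\mu)\to D^+T^-\phi(y_0)$ with $F\circ s=\mathrm{id}$; setting $\rho_\mu:=s_\#\mu$, the support condition $\supp(\rho_\mu)\subset D^+T^-\phi(y_0)$ follows, and
\[(p_x\circ\Phi_H^{t,0})_\#(\delta_{y_0}\times\rho_\mu)=F_\#(s_\#\mu)=(F\circ s)_\#\mu=\mu.\]

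The main obstacle I anticipate is item (ii) of the central lemma, specifically verifying that the endpoint momentum $p=L_v(t,y_0,\dot\gamma(t))$ attached to a minimizer belongs to $D^+T^-\phi(y_0)$ on the manifold (which needs careful handling of the proximal superdifferential in local charts, noting that $h(0,t,x,\cdot)$ is only locally semiconcave at $y_0$ along the minimizer), together with the verification of the compactness and closed-graph hypotheses required by the measurable-selection step; the remaining items should then follow by direct manipulation of pushforwards through $F$.
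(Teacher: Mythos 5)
Your proposal is correct and follows essentially the same strategy as the paper: reduce via Proposition \ref{pro:K-} to the support condition $\supp(\mu)\subset\{x:y_0\in\partial_c\phi(x)\}$, translate between $D^*T^-\phi(y_0)$ (resp.\ $D^+T^-\phi(y_0)$) and that contact set by means of the time-$t$ Hamiltonian flow, and in part (2) invoke a measurable selection theorem. There are two small points where your route is cleaner. First, for part (1) the paper simply cites \cite[Theorem 6.4.9]{Cannarsa_Sinestrari_book} to identify $D^*T^-\phi(y_0)$ with endpoint momenta of optimal trajectories, whereas you re-derive this via approximating differentiability points $y_k\to y_0$ and passing to the limit; both are fine, and the citation is shorter but your derivation is self-contained. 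Second, for part (2) the paper builds a set-valued map $x\rightsquigarrow(x,P(x))$ whose values live in the fiber $T^*_xM$, pushes $\mu$ forward by $\Phi_H^{0,t}$ composed with a selection to get a measure $\tilde\nu$ on $T^*M$, verifies $\supp(\tilde\nu)\subset\{y_0\}\times D^+T^-\phi(y_0)$, and then invokes disintegration to peel off $\rho_\mu$. Your version selects directly a Borel map $s:\supp(\mu)\to D^+T^-\phi(y_0)$ with $F\circ s=\mathrm{id}$ and sets $\rho_\mu:=s_\#\mu$, so the final identity $(p_x\circ\Phi_H^{t,0})_\#(\delta_{y_0}\times\rho_\mu)=\mu$ is immediate and the disintegration step disappears. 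This is a genuine simplification of the paper's argument and is correct, provided one verifies (as you indicate) that $x\mapsto F^{-1}(x)\cap D^+T^-\phi(y_0)$ has closed graph and nonempty compact values, which follows from continuity of $F$ and compactness of $D^+T^-\phi(y_0)$.
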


\begin{proof}
Note that there exists $K>0$, such that $\Phi_H^{t,0}(M\times[-\text{Lip}\,(\phi),\text{Lip}(\phi)]^d)\subset M\times[-K,K]^d$. Since $[-K,K]^d$ is compact, by Lemma \ref{lem:proj}, we only need to check
\begin{align*}
	\supp(\mu_\rho)=\tilde p_x(\supp((\Phi_H^{t,0})_\#(\delta_{y_0}\times\rho)))\subset\{x:y_0\in\partial_c\phi(x)\},
\end{align*}
where $\tilde p_x: M\times[-K,K]^d\subset T^*M\to M$, $\tilde p_x(x,p)=x$. 
	
Suppose $x_0\in\supp((\tilde p_x\circ\Phi_H^{t,0})_\#(\delta_{y_0}\times\rho))$. Then, there exists some $p_0\in[-K,K]^d$ such that $(x_0,p_0)\in\supp\,((\Phi_H^{t,0})_\#(\delta_{y_0}\times\rho))$, which means for any $r>0$ and $B((x_0,p_0),r)\subset T^*M$,
\begin{equation}\label{eq:rho}
	0<[(\Phi_H^{t,0})_\#(\delta_{y_0}\times\rho)]\,(B((x_0,p_0),r))=(\delta_{y_0}\times\rho)\,(\Phi_H^{0,t}(B(x_0,p_0),r)).
\end{equation}
We claim that $\Phi_H^{0,t}(x_0,p_0)\in\{y_0\}\times D^*T^-\phi(y_0)$. Otherwise, by the fact $\supp\,(\delta_{y_0}\times\rho)\subset\{y_0\}\times D^*T^-\phi(y_0)$, $\Phi_H^{0,t}(x_0,p_0)\notin\supp\,(\delta_{y_0}\times\rho)$. This contradicts to \eqref{eq:rho}. Thus, there exists a unique $p^*\in D^*T^-\phi(y_0)$, $\Phi_H^{0,t}(x_0,p_0)=(y_0,p^*)$. By Theorem 6.4.9 in \cite{Cannarsa_Sinestrari_book}, if we let $\tilde p_x\circ \Phi_H^{0,s}(x_0,p_0)=\xi(s)$, $s\in[0,t]$, then
\begin{align*}
	T^-\phi(y_0)&=\inf_{\gamma(t)=y_0}\left\{\phi(\gamma(0))+\int_0^tL(s,\gamma(s),\dot\gamma(s))\,ds\right\}\\
	&=\phi(x_0)+\int_0^tL(s,\xi(s),\dot\xi(s))\,ds\\
	&=\phi(x_0)+c(x_0,y_0),
\end{align*}
which means $y_0\in\partial_c\phi(x_0)$. In summary, $\mu_\rho=(p_x\circ\Phi_{H}^{t,0})_\#(\delta_{y_0}\times\rho)$ satisfies \eqref{eq:supp mu} and \eqref{eq:K-} holds by Lemma \ref{lem:K-} and Proposition \ref{pro:K-}. This completes the proof of (1).

For the second part of the proof, we consider the following set-valued map 
\begin{align*}
	\Gamma:\{x:y_0\in\partial_c\phi(x)\}&\to T^*M\\
	x&\rightsquigarrow (x,P(x)),
\end{align*}
where $P(x):=\{p:(x,p)\in\Phi_{H}^{t,0}(y_0,D^+T^-\phi(y_0))\}$. Due to the closedness of $D^+T^-\phi(y_0))$, it is easy to check that $P(x)$ is a non-empty closed set for any $x$. Moreover, for each open subset $U\subset\R^d$, 
\begin{align*}
	\{x:P(x)\cap U\neq\varnothing\}&=\{x:\exists p\in U, (x,p)\in \Phi_{H}^{t,0}(y_0,D^+T^-\phi(y_0))\}\\
	&=p_x(\Phi_{H}^{t,0}(y_0,D^+T^-\phi(y_0))\cap (U\times \{x:y_0\in\partial_c\phi(x)\}))
\end{align*}
is a measurable subset of $M$.  By a standard measurable selection theorem (see, for example, \cite{Clarke_book2013}), there exists a measurable selection $\tilde\Gamma(x):=(x,\tilde P(x))\in (x,P(x))$. Set $\tilde \nu:=(\Phi_H^{0,t}\circ\tilde\Gamma)_{\#}\mu$. 

We claim that $\supp(\tilde\nu)\subset(y_0,D^+T^-\phi(y_0))$. If not, we assume that $(y_*,p_*)\in\supp(\tilde\nu)\setminus(y_0,D^+T^-\phi(y_0))$. Then there exists $r_0>0$ such that $B((y_*,p_*),r_0)\cap (y_0,D^+T^-\phi(y_0))=\varnothing$, which means
\begin{align*}
	\Phi_H^{t,0}(B((y_*,p_*),r_0)\cap \Phi_H^{t,0}((y_0,D^+T^-\phi(y_0))=\varnothing.
\end{align*}
Since $\Phi_H^{t,0}$ is a diffeomorphism, according to the definition of $\tilde\Gamma$, $\tilde\Gamma^{-1}(\Phi_H^{t,0}(B((y_*,p_*),r_0))=\varnothing$. Moreover, $\tilde \nu(B((y_*,p_*),r_0))>0$, because $(y_*,p_*)\in\supp(\tilde\nu)$. However, 
\begin{align*}
	0<\tilde \nu(B((y_*,p_*),r_0))=(\Phi_H^{0,t}\circ\tilde\Gamma)_{\#}\mu(B((y_*,p_*),r_0))=\mu(\tilde\Gamma^{-1}(\Phi_H^{t,0}(B((y_*,p_*),r_0))))=\mu(\varnothing)=0.
\end{align*}
This leads to a contradiction and it means $\supp(\tilde\nu)\subset(y_0,D^+T^-\phi(y_0))$.

In addition, $(p_x)_{\#}\tilde\nu=\delta_{\{y_0\}}$. Invoking the disintegration theorem, we  assert that there exists $\tilde \nu_{y_0}\in\mathscr P(\R^d)$ with $\tilde\nu=\delta_{\{y_0\}}\times \tilde \nu_{y_0}$. Thus, $\supp(\tilde \nu_{y_0})\subset D^+T^-\phi(y_0)$ since $\supp(\tilde\nu)\subset(y_0,D^+T^-\phi(y_0))$. We complete the proof by choosing $\rho_\mu:=\tilde\nu_{y_0}$.
\end{proof}

\begin{Lem}\label{lem:exist1}
Let $\{y_i\}_{i=1}^N\subset M$, $\phi\in\emph{Lip}(M)$ and $\nu=\sum_{i=1}^N\lambda_i\delta_{y_i}$, where $\lambda_i\in[0,1]$ and $\sum_{i=1}^N\lambda_i=1$, $N\in\N$. Then $\mu=\sum_{i=1}^N\lambda_i\mu_i$ solves \eqref{eq:K-}, where $\{\mu_i\}_{i=1}^N\subset\mathscr{P}(M)$, and each $\mu_i$ satisfies \eqref{eq:supp mu} for $y_0=y_i$. Moreover $\supp(\mu)\subset\cup_{i=1}^N\{x:y_i\in\partial_c\phi(x)\}$. 
\end{Lem}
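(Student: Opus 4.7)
The proof plan is straightforward: reduce the discrete target case to the Dirac case already handled by Proposition~\ref{pro:exist}, then paste the individual solutions together with the weights $\lambda_i$.

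First, for each $i\in\{1,\ldots,N\}$ I would invoke Proposition~\ref{pro:exist} (more precisely, its conclusion that, with $\nu=\delta_{y_i}$, there exists $\mu_i\in\mathscr P(M)$ satisfying the support condition \eqref{eq:supp mu}, namely $\supp(\mu_i)\subset\{x: y_i\in\partial_c\phi(x)\}$, and such that $\mu_i$ solves \eqref{eq:K-} against $\delta_{y_i}$). Then set $\mu:=\sum_{i=1}^N\lambda_i\mu_i$, which is clearly a Borel probability measure on $M$ because each $\mu_i$ is and $\sum\lambda_i=1$.

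Next I would produce an explicit candidate transport plan by stacking the Dirac components:
\begin{align*}
\pi:=\sum_{i=1}^N\lambda_i\,(\mu_i\times\delta_{y_i})\in\mathscr P(M\times M).
\end{align*}
Checking the marginals is immediate: $(p_x)_\#\pi=\sum_i\lambda_i\mu_i=\mu$ and $(p_y)_\#\pi=\sum_i\lambda_i\delta_{y_i}=\nu$, so $\pi\in\Gamma(\mu,\nu)$. Evaluating the cost along $\pi$ and using the defining property of the $c$-subdifferential (on $\supp(\mu_i)$ one has $\phi(x)+c(x,y_i)=T^-\phi(y_i)$), we get
\begin{align*}
\int_{M\times M}c_\phi(x,y)\,d\pi
=\sum_{i=1}^N\lambda_i\int_M\bigl[\phi(x)+c(x,y_i)\bigr]\,d\mu_i(x)
=\sum_{i=1}^N\lambda_i T^-\phi(y_i)
=\int_M T^-\phi\,d\nu.
\end{align*}
Combined with the universal inequality $\int_M T^-\phi\,d\nu\leqslant\inf_{\pi'\in\Gamma(\mu,\nu)}\int c_\phi\,d\pi'$ established in the first half of the proof of Lemma~\ref{lem:K-}, this equality forces $\pi$ to be optimal for $c_\phi$ and proves that $\phi$ solves \eqref{eq:K-} with this $\mu$.

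Finally, the support statement follows from the elementary fact that for a finite convex combination one has $\supp(\sum_i\lambda_i\mu_i)\subset\bigcup_i\supp(\mu_i)$, so
\begin{align*}
\supp(\mu)\subset\bigcup_{i=1}^N\supp(\mu_i)\subset\bigcup_{i=1}^N\{x: y_i\in\partial_c\phi(x)\}.
\end{align*}
There is no real obstacle here; the only thing deserving care is the marginal computation for $\pi$ and the use of the equality $\phi(x)+c(x,y_i)=T^-\phi(y_i)$ on $\supp(\mu_i)$, which is precisely what the support condition \eqref{eq:supp mu} guarantees via Proposition~\ref{pro:K-}. Everything else reduces the discrete case to $N$ applications of the Dirac case already proved.
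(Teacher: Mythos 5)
Your proof is correct and follows essentially the same route as the paper: reduce to the Dirac case via Proposition~\ref{pro:exist}, form $\mu=\sum_i\lambda_i\mu_i$, and observe that the plan $\pi=\sum_i\lambda_i(\mu_i\times\delta_{y_i})$ (which is exactly the paper's $\sum_i\lambda_i\pi_i$, since $\mu_i\times\delta_{y_i}$ is the only plan in $\Gamma(\mu_i,\delta_{y_i})$) attains the lower bound $\int T^-\phi\,d\nu$, then finish with $\supp(\sum_i\lambda_i\mu_i)\subset\bigcup_i\supp(\mu_i)$. The explicit marginal computation and the invocation of the universal inequality from Theorem~\ref{thm:K+-} merely make explicit steps the paper's proof leaves implicit.
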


\begin{proof}
We first suppose $\mu=\sum_{i=1}^N\lambda_i\mu_i$. Consider  the formula \eqref{eq:K-} in the case of $\nu_i=\delta_{y_i}$, $i=1,2,\cdots,N$:
\begin{align*}
	\int_{M}T^-\phi(y)\,d\nu_i=\inf_{\pi_i\in\Gamma\,(\mu_i,\nu_i)}\int_{M\times M}\phi(x)+c(x,y)\,d\pi_i.
\end{align*}It holds true if and only if $\supp\,(\mu_i)\subset\{x:y_i\in\partial_c\phi(x)\}$. Proposition \ref{pro:exist} guarantees the existence of such $\mu_i$. For $\pi_i\in\Gamma_o(\mu_i,\delta_{y_i})$,
\begin{align*}
	\int_MT^-\phi(y)\,d\nu&=\sum_{i=1}^N\lambda_i\int_MT^-\phi(y)\,d\delta_{y_i}\\
	&=\sum_{i=1}^N\lambda_i\int_{M\times M}\phi(x)+c(x,y)\,d\pi_i\\
	&=\int_{M\times M}\phi(x)+c(x,y)\,d\pi,
\end{align*}where $\pi:=\sum_{i=1}^N\lambda_i\pi_i$. Thus $\pi\in\Gamma_o(\mu,\nu)$ and $\mu$ satisfies \eqref{eq:K-}. For the rest of the proof, we only need to recall that $\supp\,(\mu_1+\mu_2)=\supp\,(\mu_1)\cup\supp\,(\mu_2)$ for arbitrary $\mu_1, \mu_2\in \mathscr P\,(M)$.
\end{proof}

\begin{Cor}\label{cor:exist2}
	Under the assumptions in Lemma \ref{lem:exist1}, we have
	\begin{enumerate}[\rm (1)]
		\item for any $\Pi\in\mathscr P(T^*M)$, with $\supp(\Pi)\subset\cup_{i=1}^N(y_i,D^*T^-\phi(y_i))$ and $(p_x)_{\#}\Pi=\nu$, there exists $\mu_\Pi:=(p_x\circ\Phi_H^{t,0})_{\#}\Pi$, which holds \eqref{eq:K-};
		\item for any $\mu\in\mathscr P(M)$ satisfies \eqref{eq:K-}, there exists $\Pi_\mu\in\mathscr P(T^*M)$ with $(p_x)_\#\Pi_\mu=\nu$ and $\supp(\Pi_\mu)\subset\cup_{i=1}^N(y_i,D^+T^-\phi(y_i))$, such that $\mu=(p_x\circ\Phi_H^{t,0})_{\#}\Pi_\mu$.
	\end{enumerate}
\end{Cor}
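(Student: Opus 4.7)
The plan is to reduce both parts to the single-Dirac case handled in Proposition \ref{pro:exist} by disintegrating along the atoms of $\nu$, and then to reassemble using the combination principle of Lemma \ref{lem:exist1}. The only structural input beyond Proposition \ref{pro:exist} is that $(p_x)_\#\Pi=\nu$ (resp.\ the existence of an optimal plan) forces a decomposition indexed by $i=1,\dots,N$, since $\nu=\sum_{i=1}^N\lambda_i\delta_{y_i}$ is purely atomic.

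For (1), I would first apply the disintegration theorem to $\Pi$ with respect to $(p_x)_\#\Pi=\nu$. Because $\nu$ is concentrated on $\{y_1,\dots,y_N\}$, this produces $\rho_1,\dots,\rho_N\in\mathscr{P}(\R^d)$ with
\[
    \Pi=\sum_{i=1}^N\lambda_i(\delta_{y_i}\times\rho_i).
\]
The support hypothesis $\supp(\Pi)\subset\bigcup_{i=1}^N(y_i,D^*T^-\phi(y_i))$ immediately gives $\supp(\rho_i)\subset D^*T^-\phi(y_i)$ for each $i$. Linearity of the pushforward then yields $\mu_\Pi=\sum_{i=1}^N\lambda_i\mu_{\rho_i}$ where $\mu_{\rho_i}:=(p_x\circ\Phi_H^{t,0})_\#(\delta_{y_i}\times\rho_i)$. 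By Proposition \ref{pro:exist}(1), each $\mu_{\rho_i}$ satisfies \eqref{eq:K-} against $\delta_{y_i}$ (equivalently, $\supp(\mu_{\rho_i})\subset\{x:y_i\in\partial_c\phi(x)\}$ by Proposition \ref{pro:K-}), so Lemma \ref{lem:exist1} applied with this decomposition shows that $\mu_\Pi$ solves \eqref{eq:K-} against $\nu$.

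For (2), I would start from $\mu$ solving \eqref{eq:K-} and select any $\pi\in\Gamma_o(\mu,\nu)$; by Lemma \ref{lem:K-}, $\supp(\pi)\subset\{(x,y):y\in\partial_c\phi(x)\}$. Disintegrating $\pi$ against its second marginal $\nu$ gives $\pi=\sum_{i=1}^N\lambda_i\pi_i$ with $(p_y)_\#\pi_i=\delta_{y_i}$, whence $\supp(\pi_i)\subset\{(x,y_i):y_i\in\partial_c\phi(x)\}$. Putting $\mu_i:=(p_x)_\#\pi_i$ we obtain $\mu=\sum_{i=1}^N\lambda_i\mu_i$ with $\supp(\mu_i)\subset\{x:y_i\in\partial_c\phi(x)\}$, so Proposition \ref{pro:exist}(2) produces $\rho_i\in\mathscr{P}(\R^d)$ with $\supp(\rho_i)\subset D^+T^-\phi(y_i)$ and $\mu_i=(p_x\circ\Phi_H^{t,0})_\#(\delta_{y_i}\times\rho_i)$. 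Setting
\[
    \Pi_\mu:=\sum_{i=1}^N\lambda_i(\delta_{y_i}\times\rho_i)
\]
gives $(p_x)_\#\Pi_\mu=\nu$, $\supp(\Pi_\mu)\subset\bigcup_{i=1}^N(y_i,D^+T^-\phi(y_i))$, and, again by linearity, $(p_x\circ\Phi_H^{t,0})_\#\Pi_\mu=\sum_{i=1}^N\lambda_i\mu_i=\mu$.

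The only potential obstacle is the measurability/selection in the disintegration step of part (2), namely ensuring that the fiber measures $\pi_i$ can be extracted so that $\supp(\pi_i)$ inherits the contact-set condition of $\supp(\pi)$. This is routine because $\nu$ is supported on finitely many points, so the disintegration is an honest weighted sum rather than a family parametrized by a continuum, and the support condition is preserved on each atom by Lemma \ref{lem:proj}.
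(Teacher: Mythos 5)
Your proof is correct and follows essentially the same route as the paper: disintegrate along the finitely many atoms of $\nu$, reduce to the single-Dirac case via Proposition~\ref{pro:exist}, and reassemble by linearity using Lemma~\ref{lem:exist1}. Your treatment of part (2) is in fact more careful than the paper's, which writes $\mu=(p_x\circ\Phi_H^{t,0})_\#(\delta_{y_i}\times\rho_i)$ for each $i$ without first exhibiting the decomposition $\mu=\sum_i\lambda_i\mu_i$ with $\supp(\mu_i)\subset\{x:y_i\in\partial_c\phi(x)\}$; constructing the $\mu_i$ as the $x$-marginals of the fiber measures of an optimal plan $\pi\in\Gamma_o(\mu,\nu)$, as you do, is precisely what is needed to legitimize the per-atom application of Proposition~\ref{pro:exist}(2).
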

\begin{proof}
By disintegration theorem, there exist measures $\{\Pi_{y_i}\}_{i=1}^N$ with $\Pi=\sum_{i=1}^N\lambda_i\Pi_{y_i}\times\delta_{y_i}$. As a direct consequence of Lemma  \ref{lem:exist1} there exist $\mu_i$ satisfies \eqref{eq:supp mu}, then $\mu_\Pi:=\sum_{i=1}^N\mu_i$ solves \eqref{eq:K-}. 

Conversely, if $\mu\in\mathscr P(M)$ satisfies \eqref{eq:K-}, due to Proposition \ref{pro:exist}, there exists $\rho_i\in\mathscr P(\R^d)$ such that, for each $i$, the support of which contains in $D^+T^-\phi(y_i)$, $\mu=(p_x\circ\Phi_H^{t,0})_\#(\delta_{y_i}\times\rho_i)$. Defining $\Pi_\mu:=\sum_{i=1}^N\lambda_i\delta_{y_i}\times\rho_i$ we obtain
\begin{align*}
	(p_x\circ\Phi_H^{t,0})_{\#}\Pi_\mu=(p_x\circ\Phi_H^{t,0})_{\#}\left(\sum_{i=1}^N\lambda_i\delta_{y_i}\times\rho_i\right)=\sum_{i=1}^N\lambda_i(p_x\circ\Phi_H^{t,0})_{\#}\left(\delta_{y_i}\times\rho_i\right)=\mu.
\end{align*}
In addition, it is easy to check that $\supp(\Pi_\mu)\subset\cup_{i=1}^N(y_i,D^+T^-\phi(y_i))$ and $(p_x)_\#\Pi_\mu=\nu$.
\end{proof}

\begin{Lem}\label{liminf}
Let $(X,d)$ be a metric space, $\{\mu_n,\mu\}_{n=1}^\infty\subset \mathscr{P}(X)$.
\begin{enumerate}[\rm (1)]
	\item \emph{(Portmanteau)} $\mu_n\mathrel{\ensurestackMath{\stackon[1pt]{\rightharpoonup}{\scriptstyle\ast}}}\mu$ if and only if $\liminf_{n\to\infty}\mu_n(G)\geqslant \mu(G)$ for any open subset $G\subset X$;
	\item $\supp\,(\mu)\subset \mathop{\lim\inf}_{n\to\infty}\supp(\mu_n)$, if $\mu_n\mathrel{\ensurestackMath{\stackon[1pt]{\rightharpoonup}{\scriptstyle\ast}}}\mu$.
\end{enumerate}
\end{Lem}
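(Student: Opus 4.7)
For part (1), my plan is to prove the classical Portmanteau characterization in both directions. For the forward implication, given weak-$*$ convergence $\mu_n\WC\mu$ and an open set $G\subset X$, I would approximate $\mathbf{1}_G$ from below by the sequence of bounded continuous functions $f_k(x):=\min\{1,k\,d(x,X\setminus G)\}$, which increase pointwise to $\mathbf{1}_G$ as $k\to\infty$. For each fixed $k$, weak-$*$ convergence yields $\int f_k\,d\mu=\lim_n\int f_k\,d\mu_n$, and since $f_k\leqslant\mathbf{1}_G$, this quantity is bounded by $\liminf_n\mu_n(G)$. Letting $k\to\infty$ and invoking monotone convergence then gives $\mu(G)\leqslant\liminf_n\mu_n(G)$.

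For the reverse implication in (1), I would take any $f\in C_b(X)$ and, by an affine transformation, assume $0\leqslant f\leqslant 1$. The layer-cake formula $\int f\,d\mu=\int_0^1\mu(\{f>t\})\,dt$, combined with Fatou's lemma and the hypothesis applied to the open level sets $\{f>t\}$, yields
\[
\liminf_n\int f\,d\mu_n\geqslant\int_0^1\liminf_n\mu_n(\{f>t\})\,dt\geqslant\int_0^1\mu(\{f>t\})\,dt=\int f\,d\mu.
\]
Applying the same argument to $1-f\in C_b(X)$ and using $\mu_n(X)=\mu(X)=1$ furnishes the matching upper bound, so $\int f\,d\mu_n\to\int f\,d\mu$ for every $f\in C_b(X)$, i.e., $\mu_n\WC\mu$.

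Part (2) is then a short consequence of (1) by contradiction. Suppose $x\in\supp(\mu)$ but $x\notin\mathop{\lim\inf}_{n\to\infty}\supp(\mu_n)$; in the Kuratowski sense this means there exist an open neighborhood $U$ of $x$ and a subsequence $(n_k)$ with $U\cap\supp(\mu_{n_k})=\varnothing$, hence $\mu_{n_k}(U)=0$. On the other hand, since $U$ is an open neighborhood of a point of $\supp(\mu)$, we have $\mu(U)>0$. Applying part (1) and then restricting to the subsequence yields
\[
\mu(U)\leqslant\liminf_n\mu_n(U)\leqslant\liminf_k\mu_{n_k}(U)=0,
\]
a contradiction. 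I do not foresee a serious obstacle here: the whole statement is essentially standard, and the only point that needs care is to fix precisely the Kuratowski definition of $\mathop{\lim\inf}$ of sets so that the extraction of the bad subsequence in the proof of (2) is rigorous.
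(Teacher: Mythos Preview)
Your proposal is correct and follows essentially the same approach as the paper: for (1) the paper simply cites Billingsley's textbook while you supply the standard details, and for (2) both arguments use Portmanteau on an open neighborhood of a point in $\supp(\mu)$---the paper argues directly that $\liminf_n\mu_n(U_x)>0$ forces $U_x\cap\supp(\mu_n)\neq\varnothing$ eventually, whereas you package the same idea as a contradiction via a bad subsequence.
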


\begin{proof}
The reader can refer \cite[Theorem 2.1]{Billingsley_book1999} for the proof of (1). (2) is a direct consequence of (1). Actually, for any $x\in\supp\,(\mu)$ and any neighborhood $U_x$ of $x$ in $X$ with $\mu\,(U_x)>0$. Portmanteau's theorem implies that for all sequences $\mu_n\mathrel{\ensurestackMath{\stackon[1pt]{\rightharpoonup}{\scriptstyle\ast}}}\mu$, 
\begin{align*}
	\mathop{\lim\inf}_{n\to\infty}\mu_n(U_x)\geqslant \mu\,(U_x)>0.
\end{align*}
By the definition of limit inferior of sets, $x\in\mathop{\lim\inf}_{n\to\infty}\supp\,(\mu_n)$.
\end{proof}

\begin{The}\label{thm:exist3}
Suppose $\nu\in\mathscr{P}(M)$ and $\phi\in\emph{Lip}\,(M)$. Then there exists some $\mu\in\mathscr{P}(M)$ solving \eqref{eq:K-} and satisfying
\begin{align*}
	\supp(\mu)\subset\mathop{\lim\inf}_{k\to\infty}\left\{x\in M: \partial_c\phi(x)\cap C_k\neq\varnothing\right\},
\end{align*}
for some $a_k$-net $C_k$  of $\supp\,(\nu)$, where $a_k\searrow 0$ as $k\to\infty$.
\end{The}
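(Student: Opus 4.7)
The plan is to build $\mu$ by discretizing $\nu$, solving the problem for each discretization via Lemma~\ref{lem:exist1} and Proposition~\ref{pro:exist}, and then passing to a weak-$\ast$ limit.

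First, I would fix a sequence $a_k\searrow 0$ and, for each $k$, choose a finite $a_k$-net $C_k=\{y_1^k,\ldots,y_{N_k}^k\}$ of the compact set $\supp(\nu)$. Partitioning $\supp(\nu)$ into Borel pieces $A_i^k$ of diameter $\leqslant a_k$ with $y_i^k\in A_i^k$, set $\lambda_i^k:=\nu(A_i^k)$ and $\nu_k:=\sum_{i=1}^{N_k}\lambda_i^k\delta_{y_i^k}$. Since every point of $\supp(\nu)$ is within $a_k$ of its representative in $C_k$, we have $W_1(\nu_k,\nu)\leqslant a_k\to 0$, so $\nu_k\WC\nu$.

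Next, for each $k$, Proposition~\ref{pro:exist}(1) gives $\mu_i^k\in\mathscr{P}(M)$ with $\supp(\mu_i^k)\subset\{x:y_i^k\in\partial_c\phi(x)\}$ such that $\mu_i^k$ solves \eqref{eq:K-} against $\delta_{y_i^k}$. By Lemma~\ref{lem:exist1}, $\mu_k:=\sum_{i=1}^{N_k}\lambda_i^k\mu_i^k$ solves \eqref{eq:K-} against $\nu_k$, with
\begin{align*}
    \supp(\mu_k)\subset\bigcup_{i=1}^{N_k}\{x:y_i^k\in\partial_c\phi(x)\}=\{x\in M:\partial_c\phi(x)\cap C_k\neq\varnothing\}.
\end{align*}
Pick optimal plans $\pi_k\in\Gamma_o(\mu_k,\nu_k)$, so that $\int_M T^-\phi\,d\nu_k=\int_M\phi\,d\mu_k+\int_{M\times M}c\,d\pi_k$.

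Since $M$ is compact, $\{\mu_k\}$ and $\{\pi_k\}$ are tight; passing to a subsequence, $\mu_k\WC\mu$ and $\pi_k\WC\pi$ for some $\mu\in\mathscr P(M)$ and $\pi\in\mathscr P(M\times M)$. Taking limits in the marginal conditions yields $\pi\in\Gamma(\mu,\nu)$. Because $c=h(0,t,\cdot,\cdot)$ is continuous on the compact $M\times M$, and $T^-\phi$, $\phi$ are continuous and bounded (the former being $c$-concave and hence semiconcave by Proposition~\ref{pro:coincide} for short $t$, or bounded continuous for general $t$), the integrals pass to the limit:
\begin{align*}
    \int_M T^-\phi\,d\nu=\int_M\phi\,d\mu+\int_{M\times M}c\,d\pi=\int_{M\times M}c_\phi\,d\pi.
\end{align*}
Combining with the universal inequality \eqref{eq:K-_ineq}, $\pi\in\Gamma_o(\mu,\nu)$ and $\mu$ solves \eqref{eq:K-}.

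Finally, the support inclusion follows from Lemma~\ref{liminf}(2): $\supp(\mu)\subset\liminf_{k\to\infty}\supp(\mu_k)$, combined with the inclusion for each $\supp(\mu_k)$ obtained above, gives
\begin{align*}
    \supp(\mu)\subset\mathop{\lim\inf}_{k\to\infty}\{x\in M:\partial_c\phi(x)\cap C_k\neq\varnothing\}.
\end{align*}
The main technical hurdle is the passage to the limit in the cost integral: continuity and boundedness of $c$ on $M\times M$ (ensured by compactness and the regularity of the fundamental solution) makes this a routine application of weak-$\ast$ convergence, but one must carefully justify tightness of $\{\pi_k\}$ from tightness of the marginals and check that the limiting $\pi$ still matches the prescribed marginals $(\mu,\nu)$.
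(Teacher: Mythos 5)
Your proof is correct and follows essentially the same strategy as the paper: discretize $\nu$ by finite nets, solve the problem for each atomic approximation via Lemma~\ref{lem:exist1}, and pass to a weak-$\ast$ limit using tightness, continuity of $c$ and $T^-\phi$, and Lemma~\ref{liminf}. The only cosmetic differences are that you obtain the atomic weights from a Borel partition rather than by rational approximation, and you verify optimality of the limit plan directly from the equality combined with \eqref{eq:K-_ineq} rather than citing a stability theorem for optimal plans.
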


\begin{proof}
Because of the closedness of $\supp\,(\nu)$ for $\nu\in\mathscr{P}(M)$ and the compactness of $M$, $\supp(\nu)$ is a compact subset of $M$. Thus $\supp(\nu)$ has a finite $1/n$-net $C_n:=\{y_i^{(n)}\}_{i=1}^{N_n}$ for each $n\in\N^*$. Now we have $\supp(\nu)\subset\cup_{i=1}^{N_n}B(y_i^{(n)},1/n)$. 
	
Choose $\{r_i\}_{i=1}^{N_n}\subset\Q^+$ satisfying $\sum_{i=1}^{N_n}r_i=1$ and $\sum_{i=1}^{N_n}\left|r_i-\nu\,(B(y_i^{(n)},1/n))\right|\leqslant1/n$. Let $\nu_n:=\sum_{n=1}^{N_n}r_i\delta_{y_i^{(n)}}$. According to the argument in \cite[Section 6]{Billingsley_book1999}, $\nu_n\mathrel{\ensurestackMath{\stackon[1pt]{\rightharpoonup}{\scriptstyle\ast}}}\nu$, i.e.,
\begin{align*}
	\int_MT^-\phi(y)\,d\nu=\lim_{n\to\infty}\int_MT^-\phi(y)\,d\nu_n.
\end{align*}
By Proposition \ref{lem:exist1}, we can find some $\mu_n\in\mathscr{P}(M)$ and $\pi_n\in\Gamma_o(\mu_n,\nu_n)$, such that
\begin{align*}
	\int_M T^-\phi(y)\,d\nu_n=\int_{M\times M}\phi(x)+c(x,y)\,d\pi_n.
\end{align*}
and
\begin{equation}\label{eq:supp}
	\supp(\mu_n)\subset\bigcup_{i=1}^{N_n}\left\{x:y_i^{(n)}\in\partial_c\phi(x)\right\}=\{x:\partial_c\phi(x)\cap C_n\neq\varnothing\}.
\end{equation}
Note that the sequence $\{\mu_n\}_{n=1}^\infty$ is tight on $\mathscr{P}(M)$. Invoking Prokhorov theorem, we can get a convergent subsequence of $\{\mu_{n_k}\}_{k=1}^\infty$, such that $\mu_{n_k}\mathrel{\ensurestackMath{\stackon[1pt]{\rightharpoonup}{\scriptstyle\ast}}}\mu\in\mathscr{P}(M)$. The stability of the transport plan implies there exists some subsequence $\{\pi_{n_k}\}_{k=1}^\infty$ and $\pi_{n_k}\mathrel{\ensurestackMath{\stackon[1pt]{\rightharpoonup}{\scriptstyle\ast}}}\pi$ with $\pi\in\Gamma_o(\mu,\nu)$.	Thus,
\begin{align*}
	\int_MT^-\phi(y)\,d\nu&=\lim_{k\to\infty}\int_MT^-\phi(y)\,d\nu_{n_k}\\
	&=\lim_{k\to\infty}\int_{M\times M}\phi(x)+c(x,y)\,d\pi_{n_k}\\
	&=\int_{M\times M}\phi(x)+c(x,y)\,d\pi,
\end{align*}
i.e., $\mu$ solves \eqref{eq:K-}. By Lemma \ref{liminf} and \eqref{eq:supp} we have
\begin{align*}
	\supp(\mu)\subset\mathop{\lim\inf}_{k\to\infty}\supp\,(\mu_{n_k})\subset \mathop{\lim\inf}_{k\to\infty}\{x:\partial_c\phi(x)\cap C_{n_k}\neq\varnothing\}.
\end{align*}
This completes the proof by choosing $a_k=1/n_k$.
\end{proof}

\begin{Cor}
Under the assumptions in Theorem \ref{thm:exist3}, there exists  $\mu\in\mathscr P(M)$ solving \eqref{eq:K-} and satisfying
\begin{align*}
	\supp(\mu)\subset p_x\circ\Phi_H^{t,0}\,(\mathrm{graph}\,(D^*T^-\phi(\supp(\nu)))).
\end{align*}
\end{Cor}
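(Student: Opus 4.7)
The plan is to refine the approximation scheme in the proof of Theorem~\ref{thm:exist3}, choosing the discrete couplings so that each approximating measure $\mu_n$ is the $(p_x\circ\Phi_H^{t,0})$-pushforward of a measure already supported on $\mathrm{graph}(D^*T^-\phi(\supp(\nu)))$, and then pass to the weak-$*$ limit exploiting the closedness of
\[
K:=p_x\circ\Phi_H^{t,0}\bigl(\mathrm{graph}(D^*T^-\phi(\supp(\nu)))\bigr).
\]

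First, since $\supp(\nu)$ is compact, I can choose the $1/n$-net $C_n=\{y_i^{(n)}\}_{i=1}^{N_n}$ of Theorem~\ref{thm:exist3} to lie inside $\supp(\nu)$, so that the approximating $\nu_n=\sum_i r_i^{(n)}\delta_{y_i^{(n)}}\WC\nu$ is concentrated on $\supp(\nu)$. Since $T^-\phi$ is semiconcave, the reachable gradient set $D^*T^-\phi(y_i^{(n)})$ is non-empty; pick any $p_i^{(n)}$ in it and set $\rho_i^{(n)}:=\delta_{p_i^{(n)}}$. By Proposition~\ref{pro:exist}(1) each $(p_x\circ\Phi_H^{t,0})_\#(\delta_{y_i^{(n)}}\times\rho_i^{(n)})$ solves \eqref{eq:K-} for the marginal $\delta_{y_i^{(n)}}$, so Corollary~\ref{cor:exist2}(1) produces
\[
\mu_n:=(p_x\circ\Phi_H^{t,0})_\#\Pi_n,\qquad \Pi_n:=\sum_{i=1}^{N_n}r_i^{(n)}\delta_{(y_i^{(n)},\,p_i^{(n)})},
\]
solving \eqref{eq:K-} for the marginal $\nu_n$. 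By construction $\supp(\Pi_n)\subset\mathrm{graph}(D^*T^-\phi(\supp(\nu)))$, hence $\supp(\mu_n)\subset K$ as soon as $K$ is known to be closed.

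The key step is to verify that $K$ is closed (hence compact, once one notes that $D^*T^-\phi$ has uniformly bounded values by Lipschitz continuity of $T^-\phi$). This reduces to closedness of the graph $\{(y,p):y\in\supp(\nu),\,p\in D^*T^-\phi(y)\}$ in $M\times\R^d$, which is a diagonal argument: if $y_n\to y\in\supp(\nu)$ and $p_n\in D^*T^-\phi(y_n)$ with $p_n\to p$, then for each $n$ I can select a differentiability point $w_n$ of $T^-\phi$ satisfying $|w_n-y_n|+|DT^-\phi(w_n)-p_n|<1/n$; then $w_n\to y$ and $DT^-\phi(w_n)\to p$, which shows $p\in D^*T^-\phi(y)$. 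Continuity of $p_x\circ\Phi_H^{t,0}$ then exhibits $K$ as the continuous image of a compact set, so it is closed.

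Finally, the proof of Theorem~\ref{thm:exist3} applies verbatim to the specific sequence $\{\mu_n\}$ just constructed: Prokhorov's theorem combined with the stability of optimal plans yields a subsequence $\mu_{n_k}\WC\mu$ with $\mu$ solving \eqref{eq:K-}, and Lemma~\ref{liminf}(2), together with $\supp(\mu_{n_k})\subset K$ for every $k$, gives
\[
\supp(\mu)\subset\mathop{\lim\inf}_{k\to\infty}\supp(\mu_{n_k})\subset K,
\]
which is the claimed inclusion. The delicate point of the argument is the closedness of the reachable-gradient graph; once that diagonal extraction is in hand the rest is a direct assembly of Proposition~\ref{pro:exist}(1), Corollary~\ref{cor:exist2}(1) and Theorem~\ref{thm:exist3}.
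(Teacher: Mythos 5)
Your proposal is correct and follows essentially the same route as the paper: approximate $\nu$ by discrete measures with atoms in $\supp(\nu)$, lift to the cotangent bundle via reachable gradients and Corollary \ref{cor:exist2}, and pass to a weak-$*$ limit controlled by Lemma \ref{liminf}. The only organizational difference is that the paper first extracts a weak-$*$ limit of the lifted measures $\Pi_{n_k}$ in $\mathscr{P}(T^*M)$ and then projects its support, whereas you apply Lemma \ref{liminf} directly to $\mu_{n_k}$ and invoke closedness of $K$ --- both versions ultimately rest on the closedness of the reachable-gradient graph over $\supp(\nu)$, which you verify explicitly via the diagonal extraction while the paper uses it tacitly.
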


\begin{proof}
Following the notations in the proof of Theorem \ref{thm:exist3}, for each $\nu_n$ we can pick some $\Pi_n\in\mathscr P(T^*M)$ whose support is contained in $\bigcup_{i=1}^{N_n}(y_i^{(n)},D^*T^-\phi(y_i^{(n)}))$. $\mu_n:=(p_x\circ\Phi_H^{t,0})_{\#}\Pi_n$ solves \eqref{eq:K-} by Corollary \ref{cor:exist2}. Since $\nu_n\mathrel{\ensurestackMath{\stackon[1pt]{\rightharpoonup}{\scriptstyle\ast}}}\nu$ and $\{\Pi_n\}$ is tight, $\{\Pi_n\}$ admits a subsequence $\{\Pi_{n_k}\}$ converging to some $\Pi\in\mathscr P(T^*M)$. In addition, $\mu_{n_k}\mathrel{\ensurestackMath{\stackon[1pt]{\rightharpoonup}{\scriptstyle\ast}}}\mu$ with $\mu$ solving \eqref{eq:K-} and $(p_x)_\#\Pi=\nu$. Consequently, by Lemma \ref{liminf},
\begin{align*}
	\supp(\Pi)\subset\liminf_{k\to\infty}\cup_{i=1}^{N_{n_k}}\left(y_i,D^*T^-\phi\left(y_i^{(n_k)}\right)\right)\subset \mathrm{graph}\,(D^*T^-\phi(\supp(\nu))).
\end{align*}
Combining with Lemma \ref{lem:proj} and the fact that $\Phi_H^{0,t}$ is a diffeomorphism, we get
\begin{align*}
	\supp(\mu)&=\supp((p_x\circ\Phi_H^{t,0})_{\#}\Pi)\\
		&=p_x\circ\Phi_H^{t,0}(\supp(\Pi))\\
		&\subset p_x\circ\Phi_H^{t,0}\,(\mathrm{graph}\,(D^*T^-\phi(\supp(\nu)))).
\end{align*}
This completes the proof.
\end{proof}

Finally, we discuss the Monge problem. We suppose $X$ and $Y$ are both Polish spaces.

\begin{Pro}\label{pro:map}
Assume that $\mu\in\mathscr{P}(X)$, $\nu\in\mathscr{P}(Y)$ and $\mu$ has no atom. $T: X\to Y$, $T_\#\mu=\nu$. Then the followings are equivalent.
\begin{enumerate}[\rm (1)]
	\item $T$ is an optimal transport map;
	\item There exists $\phi\in\emph{Lip}_b(X)$, such that $\int_XT^-\phi(T(x))\,d\mu=\int_X\phi(x)+c(x,T(x))\,d\mu$;
	\item There exists $\phi\in\emph{Lip}_b(X)$, such that  $T(x)\in\partial_c\phi(x)$ holds for $\mu$-a.e. $x\in X$. 
\end{enumerate}
\end{Pro}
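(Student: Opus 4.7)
The plan is to establish the cycle $(1) \Rightarrow (2) \Rightarrow (3) \Rightarrow (1)$, using as the central device the transport plan $\pi_T := (\mathrm{id}, T)_\# \mu \in \Gamma(\mu, \nu)$ induced by the Monge map $T$, together with the reformulated Kantorovich-Rubinstein duality (Theorem \ref{thm:K+-}) and Lemma \ref{lem:K-}. The key algebraic fact used throughout is that for any Borel $f$ on $X \times Y$ one has $\int_{X \times Y} f(x,y)\,d\pi_T = \int_X f(x, T(x))\,d\mu$, and in particular $\int_Y T^-\phi\,d\nu = \int_X T^-\phi(T(x))\,d\mu$ because $\nu = T_\# \mu$.

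The equivalence $(2) \Leftrightarrow (3)$ is the easiest and does not invoke duality at all. Since $T^-\phi(y) \le \phi(x) + c(x,y)$ holds pointwise by definition, setting $y = T(x)$ gives $T^-\phi(T(x)) \le \phi(x) + c(x, T(x))$ for every $x$. Equality of the $\mu$-integrals of the two sides, which is exactly the assertion in (2), therefore forces $\mu$-a.e.\ pointwise equality, and this is precisely the statement that $x$ realizes the infimum defining $T^-\phi(T(x))$, i.e., $T(x) \in \partial_c \phi(x)$. The reverse direction simply integrates the $\mu$-a.e.\ identity.

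For $(1) \Rightarrow (2)$: since $T$ is an optimal Monge map, $\pi_T$ is an optimal transport plan with $\int c\,d\pi_T = \mathcal{C}(\mu, \nu)$. Theorem \ref{thm:KBD} produces a maximizing admissible Kantorovich pair $(\phi, \psi) \in K_c$, and Theorem \ref{thm:K+-}(1) then gives that $\phi$ solves \eqref{eq:K-}; hence $\int_Y T^-\phi\,d\nu$ equals the infimum of $\int c_\phi\,d\pi$ over $\Gamma(\mu, \nu)$, which is attained by $\pi_T$, so $\int_Y T^-\phi\,d\nu = \int c_\phi\,d\pi_T$. Rewriting via the push-forward gives the identity in (2). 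Conversely, for $(3) \Rightarrow (1)$, integrating the $\mu$-a.e.\ identity $\phi(x) + c(x, T(x)) = T^-\phi(T(x))$ yields $\int c_\phi\,d\pi_T = \int_Y T^-\phi\,d\nu$; combined with the universal inequality \eqref{eq:K-_ineq}, this both shows that $\phi$ solves \eqref{eq:K-} and that $\pi_T$ attains this infimum, so subtracting the constant $\int \phi\,d\mu$ gives $\int c\,d\pi_T = \mathcal{C}(\mu, \nu)$, whence $\pi_T$ is optimal and $T$ is an optimal Monge map.

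The single delicate point is securing a Kantorovich potential $\phi$ in $\mathrm{Lip}_b(X)$ in step $(1) \Rightarrow (2)$: in the abstract Polish setting Theorem \ref{thm:KBD} only provides measurable $c$-convex potentials, and the bounded Lipschitz regularity must be inherited from the cost (e.g., when $c = h(t_1, t_2, \cdot, \cdot)$ is the fundamental solution on a compact manifold, the semiconcavity of $c$ forces $c$-convex potentials to be Lipschitz). The atomlessness of $\mu$ is not used in the three implications themselves; it is the standard hypothesis in the Monge formulation ensuring that optimal transport maps exist in the first place.
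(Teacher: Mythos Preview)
Your cycle $(1)\Rightarrow(2)\Rightarrow(3)\Rightarrow(1)$ and the arguments for $(2)\Leftrightarrow(3)$ and $(3)\Rightarrow(1)$ match the paper's proof essentially verbatim. The one genuine gap is your treatment of the atomless hypothesis.

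You write that ``since $T$ is an optimal Monge map, $\pi_T$ is an optimal transport plan with $\int c\,d\pi_T = \mathcal{C}(\mu,\nu)$'' and then claim atomlessness ``is not used in the three implications themselves.'' This is where the argument breaks. Statement (1) asserts only that $T$ minimizes $\int_X c(x,T(x))\,d\mu$ among transport \emph{maps}; it does not assert that $\pi_T=(\mathrm{id},T)_\#\mu$ is optimal among transport \emph{plans}. In general the Monge infimum can be strictly larger than the Kantorovich minimum $\mathcal{C}(\mu,\nu)$, so Monge-optimality of $T$ does not by itself give $\int c\,d\pi_T=\mathcal{C}(\mu,\nu)$. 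The paper closes this gap precisely by invoking Pratelli's theorem: when $\mu$ has no atom, the Monge and Kantorovich values coincide, and only then does Monge-optimality of $T$ force Kantorovich-optimality of $\pi_T$. So the atomless assumption is used exactly once, in $(1)\Rightarrow(2)$, and not merely as an existence hypothesis.

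Apart from this point your reasoning is correct and your remark about the Lipschitz regularity of the potential is well taken; the paper simply assumes a $\phi$ solving \eqref{eq:K-} with this regularity is available.
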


\begin{proof}
$(1)\Rightarrow(2)$. Since $\mu$ has no atom, according to Pratelli theorem (See \cite[Theorem 2.2]{Ambrosio_Brue_Semola_book2021}), if $T$ is an optimal map,
\begin{align*}
	\int_Xc(x,T(x))\,d\mu=\inf_{\pi\in\Gamma\,(\mu,\nu)}\int_{X\times Y}c(x,y)\,d\pi.
\end{align*}
Thus for any $\phi\in\text{Lip}_b(X)$, we have
\begin{align*}
	\int_X\phi(x)+c(x,T(x))\,d\mu=\inf_{\pi\in\Gamma\,(\mu,\nu)}\int_{X\times Y}\phi(x)+c(x,y)\,d\pi.
\end{align*}
Choose some $\phi$ solving \eqref{eq:K-}, then
\begin{align*}
	\int_X\phi(x)+c(x,T(x))\,d\mu=\int_YT^-\phi(y)\,d\nu=\int_XT^-\phi(T(x))\,d\mu,
\end{align*}
with $T_\#\mu=\nu$.

$(2)\Rightarrow(3)$. We only need to observe that for any $x\in X$, $T^-\phi(T(x))\leqslant \phi(x)+c(x,T(x))$. By the equality in (2), $x=\arg\min\{\phi(\cdot)+c(\cdot,T(x))\}$ for $\mu$-a.e. $x\in X$.
	
$(3)\Rightarrow(1)$. Given $\phi\in\text{Lip}_b(X)$. We have
\begin{align*}
	\int_X T^-\phi(T(x))\,d\mu&=\int_Y T^-\phi(y)\,d\nu\\
		&\leqslant\inf_{\pi\in\Gamma(\mu,\nu)}\int_{X\times Y}\phi(x)+c(x,y)\,d\pi\\
		&\leqslant\int_{X\times Y}\phi(x)+c(x,y)\,d(\text{id}\times T)_\#\mu\\
		&=\int_X\phi(x)+c(x,T(x))\,d\mu.
\end{align*}
If (3) holds, then 
\begin{equation}\label{eq:optimal map}
	\int_XT^-\phi(T(x))\,d\mu=\int_X\phi(x)+c(x,T(x))\,d\mu=\inf_{\pi\in\Gamma(\mu,\nu)}\int_{X\times Y}\phi(x)+c(x,y)\,d\pi.
\end{equation}
It is well known that the infimum of Monge's problem is not less than the minimum of the associated Kantorovich's problem, thus $T$ in \eqref{eq:optimal map} is the optimal transport map.
\end{proof}

\begin{Rem}
The function $\phi$ in (2) or (3) of Proposition \ref{pro:map} satisfies \eqref{eq:K-}.
\end{Rem}

\subsection{Random Lax-Oleinik operators}

Suppose $(X,d)$ is a metric space. Let $(\mathscr P_p(X),W_p)$ be the $p$-Wasserstein space for $p\in[1,+\infty)$, where $W_p$ is the $p$-order Wasserstein metric, defined as
\begin{align*}
	W_p(\mu,\nu):=\left\{\inf_{\gamma\in\Gamma(\mu,\nu)}\int_{X^2}d^p(x,y)\,d\gamma\right\}^{\frac{1}{p}}.
\end{align*}
If $X$ is a Polish space, then so is $(\mathscr P_p(X),W_p)$.
	
Set $\kappa_1,\kappa_2\geqslant 0$. A function $\phi: X\to [-\infty,+\infty]$ is said to be \textit{$(\kappa_1,\kappa_2)$-Lipschitz in the large}, if for any $x,y\in X$, we have
\begin{align*}
	|\phi(x)-\phi(y)|\leqslant\kappa_1d(x,y)+\kappa_2.
\end{align*}

\begin{Rem}\label{rem:lip in the large}
For any function $\phi: X\to\R$, we have:
\begin{enumerate}[\rm (1)]
	\item $\phi\in\Lip(X)$ implies $\phi$ is $(\Lip(\phi),0)$-Lipschitz in the large;
	\item if $X$ is compact, $\phi$ is $(\kappa_1,\kappa_2)$-Lipschitz in the large if and only if $\phi$ is bounded;
	\item if $X$ is a geodesic space, $\phi\in\mathrm{UC}\,(X)$ if and only for any $\varepsilon>0$, there exists $K_\varepsilon>0$, such that $\phi$ is $(\varepsilon,K_\varepsilon)$-Lipschitz in the large.
\end{enumerate}
\end{Rem}

\begin{Lem}
Assume that $\phi$ is $(\kappa_1,\kappa_2)$-Lipchitz in the large, $\mu\in\mathscr P_1(\R^d)$. Then $\phi\in L^1(\mu)$. Particularly, if $\phi\in \mathrm{UC}\,(\R^d)$ and $\mu\in\mathscr P_1(\R^d)$, then $\phi\in L^1(\mu)$.
\end{Lem}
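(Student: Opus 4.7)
The plan is to reduce integrability of $\phi$ against $\mu$ to the finiteness of the first moment of $\mu$, which is built into the definition of $\mathscr P_1(\R^d)$.

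First I would fix an arbitrary reference point $x_0\in\R^d$ and apply the $(\kappa_1,\kappa_2)$-Lipschitz-in-the-large condition between $x$ and $x_0$ to obtain the pointwise bound
\begin{align*}
	|\phi(x)|\leqslant|\phi(x_0)|+\kappa_1|x-x_0|+\kappa_2,\qquad x\in\R^d.
\end{align*}
Since the right-hand side is a Borel measurable affine function of $|x-x_0|$, integrating against $\mu$ yields
\begin{align*}
	\int_{\R^d}|\phi|\,d\mu\leqslant|\phi(x_0)|+\kappa_2+\kappa_1\int_{\R^d}|x-x_0|\,d\mu.
\end{align*}
The last integral is finite precisely because $\mu\in\mathscr P_1(\R^d)$, which establishes $\phi\in L^1(\mu)$.

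For the particular case, I would invoke part (3) of Remark \ref{rem:lip in the large}: since $\R^d$ is a geodesic space, any $\phi\in\mathrm{UC}(\R^d)$ is $(\varepsilon,K_\varepsilon)$-Lipschitz in the large for every $\varepsilon>0$ (pick any one such pair, e.g.\ $\varepsilon=1$). Then the first part applies directly with $\kappa_1=\varepsilon$ and $\kappa_2=K_\varepsilon$.

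There is no real obstacle here; the lemma is essentially a restatement of the first-moment condition combined with the linear growth control afforded by the Lipschitz-in-the-large hypothesis. The only mild subtlety is measurability of $\phi$, which follows from continuity in the uniformly continuous case and which is implicitly assumed (e.g.\ via $\phi\in L^0$) in the setting where this lemma is applied.
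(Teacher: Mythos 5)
Your proof is correct and follows essentially the same route as the paper: fix a base point $x_0$, use the Lipschitz-in-the-large bound to control $|\phi(x)|$ by an affine function of $|x-x_0|$, integrate using the finite first moment of $\mu$, and invoke Remark \ref{rem:lip in the large}(3) for the uniformly continuous case.
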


\begin{proof}
The proof is directly from the definition of Lipschitz in the large. Indeed, for arbitrary $x_0\in X$,
\begin{align*}
	\int_{\R^d}|\phi(x)-\phi(x_0)|\,d\mu\leqslant\int_{\R^d}(\kappa_1d(x,x_0)+\kappa_2)\,d\mu<+\infty.
\end{align*}
Thus, this completes the proof with Remark \ref{rem:lip in the large}.
\end{proof}

\begin{defn}[\protect{\cite{Ambrosio_Gigli2013,Ambrosio_GigliNicola_Savare_book2008}}, Potential energy]
Given $\phi\in L^0(X;(-\infty,+\infty])$ which is bounded from below. Associated with $\phi$ we define a functional of $\mathscr P_p(X)$, called \textit{the potential energy associated to $\phi$},
\begin{align*}
	\Phi(\mu):=\int_X\phi\,d\mu,\qquad\mu\in \mathscr P_p(X).
\end{align*}
We denote this functional by $\phi(\mu)$ for short without getting confused.
\end{defn}

\begin{defn}[Dynamical cost functional]Let $s,t\in\R$ with $s<t$, $\mu\in\mathscr{P}(X)$ and $\nu\in\mathscr{P}(Y)$. The dynamical cost functional associated to $c^{s,t}$ is 
\begin{align*}
	C^{s,t}(\mu,\nu):=\inf_{\pi\in\Gamma(\mu,\nu)}\int_{X\times Y}c^{s,t}(x,y)\,d\pi=\inf_{\substack{\mathrm{law}(x)=\mu\\\mathrm{law}(y)=\nu}}\mathbb E(c^{s,t}(x,y)),
\end{align*}
where $x(\omega)$ and $y(\omega)$ in the last term are $X$-valued and $Y$-valued random variables of some probability space $(\Omega,\mathscr F,\mathbb P)$ respectively. 
\end{defn}
We remark that Brenier \cite{Brenier1987} and Knott-Smith \cite{Knott_Smith1984} proved the existence of a probability space $(\Omega,\mathscr F,\mathbb P)$ such that for arbitrary $\mu\in\mathscr P_2(\R^d)$, we can always find a random variable $T\in L^0(\Omega;\R^d)$, which satisfies $\law(T)=\mu$. Indeed, we choose $\Omega=\R^d$, $\mathscr F$ is the set of Lebesgue measurable subsets of $\R^d$ and $\mathbb P=\mathscr L^d|_{B(0,R)}$, where $R>0$ makes $\mathscr L^d(B(0,R))=1$. If $c(x,y)=\frac{1}{2}|x-y|^2$, for the corresponding Monge's transport problem, there exists an optimal map $T\in L^0(\R^d,\R^d)$ of $\mathcal C(\mathbb P,\mu)$ such that $T_\#\mathbb P=\mu$, i.e., $\law(T)=\mu$. By Theorem 10.28 and Example 10.36 of \cite{Villani_book2009}, the argument for the case $X=Y=M$ with $M$ compact manifold and $\mu\in\mathscr P(M)$ is similar.

Let us recall some classical results on measurable selection in the field of measure theory, infinite dimensional analysis and optimal transport. 


\begin{Lem}[\protect{\cite[Theorem 18.19]{Aliprantis_Border_book2006}}]\label{sele_infinite}
Let $X$ be a separable metrizable space and $(S,\Sigma)$ a measurable space. Let $\varphi: S\leadsto X$ be a weakly measurable (see \cite[Section 17]{Aliprantis_Border_book2006}) nonempty compact set-valued map, and suppose $F: S\times X\to\R$ is a Carath\'eodory function. Define the marginal functions $f: S\to \R$ by
\begin{align*}
	f(s):=\max_{x\in\varphi(s)}F(s,x),
\end{align*}
and the correspondence $\Lambda: S\leadsto X$ of maximizers by $\Lambda(s):=\mathop{\arg\max}_{x\in\varphi(s)}\{F(s,x)\}$, then
\begin{enumerate}[\rm (1)]
	\item $f$ is measurable;
	\item $\Lambda(s)$ is a nonempty compact set for each $s\in S$ and exists a measurable selection $\lambda(s)$.
\end{enumerate}
\end{Lem}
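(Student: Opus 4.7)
The plan is to invoke a Castaing representation of the weakly measurable compact-valued correspondence $\varphi$, and then combine it with the Kuratowski--Ryll-Nardzewski measurable selection theorem. Since $X$ is separable metrizable and $\varphi: S\leadsto X$ has nonempty compact values and is weakly measurable, standard results in set-valued analysis give a sequence of measurable selectors $\{f_n:S\to X\}_{n\in\N}$ such that $\varphi(s)=\overline{\{f_n(s):n\in\N\}}$ for every $s\in S$. With this representation in hand, the proof separates cleanly into three parts.

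For the measurability of $f$, I would use continuity of $F(s,\cdot)$ together with density to write
\begin{align*}
	f(s)=\max_{x\in\varphi(s)}F(s,x)=\sup_{n\in\N}F(s,f_n(s)).
\end{align*}
Each map $s\mapsto F(s,f_n(s))$ is $\Sigma$-measurable because $F$ is Carath\'eodory (measurable in $s$ and continuous in $x$) and $f_n$ is measurable; this is a standard composition argument where one first approximates $f_n$ by simple functions. A countable supremum of measurable functions is measurable, so $f$ is measurable.

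For the nonemptiness and compactness of $\Lambda(s)$, note that for each fixed $s$, $F(s,\cdot)$ is continuous on the nonempty compact set $\varphi(s)$, so it attains its maximum value $f(s)$; hence $\Lambda(s)\neq\varnothing$. Moreover, $\Lambda(s)=\varphi(s)\cap F(s,\cdot)^{-1}(\{f(s)\})$ is a closed subset of the compact set $\varphi(s)$, thus compact.

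The main step, and the one requiring most care, is producing the measurable selection $\lambda$. I would verify that $\Lambda$ is itself a weakly measurable correspondence with nonempty closed values. The key observation is that the graph of $\Lambda$ equals
\begin{align*}
	\mathrm{graph}(\Lambda)=\{(s,x)\in S\times X: x\in\varphi(s),\ F(s,x)=f(s)\},
\end{align*}
which is measurable in $\Sigma\otimes\mathscr B(X)$ because $\mathrm{graph}(\varphi)$ is measurable (by weak measurability of $\varphi$ plus the Castaing representation) and because $(s,x)\mapsto F(s,x)-f(s)$ is jointly measurable as a Carath\'eodory function minus a measurable function in $s$. Once measurability of the graph is established in a Polish/separable metrizable setting, either a direct application of the Kuratowski--Ryll-Nardzewski theorem or the projection/selection theorem of Aumann yields a measurable selector $\lambda:S\to X$ with $\lambda(s)\in\Lambda(s)$ for every $s\in S$. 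The subtle point — and the expected main obstacle — is verifying weak measurability of $\Lambda$ (equivalently, that $\{s:\Lambda(s)\cap U\neq\varnothing\}\in\Sigma$ for open $U\subset X$) without assuming $(S,\Sigma)$ is complete; this is handled by the Castaing representation combined with upper semicontinuity of $F(s,\cdot)$ on the compact sets $\varphi(s)$.
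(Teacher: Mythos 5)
The paper does not prove this lemma; it is imported verbatim with the citation to \cite[Theorem 18.19]{Aliprantis_Border_book2006} (the Measurable Maximum Theorem), and the appendix only proves Lemmas \ref{lem:D^+} and \ref{lem:proj}, so there is no in-paper argument to compare against. Your treatment of the easy parts is fine: the Castaing representation $\{f_n\}$ of $\varphi$, the identity $f(s)=\sup_nF(s,f_n(s))$ via density and continuity, measurability of the compositions $s\mapsto F(s,f_n(s))$ for Carath\'eodory $F$, and the nonemptiness and compactness of $\Lambda(s)$ as a closed level set inside a compact set. But the substance of the theorem---extracting a $\Sigma$-measurable selector of $\Lambda$---is precisely where you stop. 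You correctly observe that Kuratowski--Ryll-Nardzewski needs weak measurability of $\Lambda$, then dispose of it with the assertion that this is ``handled by the Castaing representation combined with upper semicontinuity.'' That sentence names the obstacle but does not cross it.

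There are two concrete problems with the sketch as written. The graph-measurability/Aumann route is unavailable under the stated hypotheses: the projection onto $S$ of a $\Sigma\otimes\mathscr{B}(X)$-measurable set is in general only analytic, so Aumann/von~Neumann selection yields a universally measurable selector rather than a $\Sigma$-measurable one, and the lemma imposes no completeness on $(S,\Sigma)$. The natural direct attempt to verify weak measurability also fails: one cannot test $\Lambda(s)\cap U\neq\varnothing$ by asking whether $\sup_{x\in\varphi(s)\cap U}F(s,x)=f(s)$, since $\varphi(s)\cap U$ need not be compact and the supremum over it can equal $f(s)$ with no maximizer lying in $U$ --- take $\varphi(s)=[0,1]$, $F(s,x)=x$, $U=(0,1)$; the supremum over $U$ is $1=f(s)$ but $\Lambda(s)=\{1\}$, so $\Lambda(s)\cap U=\varnothing$. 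What the argument actually requires (and is how Aliprantis--Border proceed) is to write $\Lambda=\bigcap_k\psi_k$ where $\psi_k(s):=\{x\in\varphi(s):F(s,x)\geqslant f(s)-1/k\}$, prove each $\psi_k$ is a nonempty compact-valued measurable correspondence (itself a nontrivial step, using the Castaing selectors and the measurability of $f$), and then invoke the fact that a decreasing intersection of nonempty compact-valued measurable correspondences into a separable metrizable space is measurable --- this is where compactness of $\varphi(s)$ enters essentially, via the finite intersection property --- so that $\Lambda$ is weakly measurable and Kuratowski--Ryll-Nardzewski applies to its compact (hence Polish) values. Without filling in that chain, your proposal is a correct skeleton rather than a proof: the step you leave to a remark is exactly the step that makes the theorem nontrivial.
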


\begin{Lem}[\protect{\cite[Corollary 5.22]{Villani_book2009}}]\label{sele_villani}
Let $X$ and $Y$ be Polish spaces and let $c: X\times Y\to\R$ be a continuous cost function with a lower bound. Suppose the map
\begin{align*}
	\Omega&\to\mathscr{P}(X)\times\mathscr{P}(Y)\\
	\omega&\mapsto (\mu_\omega,\nu_\omega)
\end{align*}
is measurable. Then there is a measurable selection $\omega\mapsto \pi_\omega$ such that for each $\omega$, $\pi_\omega$ is an optimal plan between $\mu_\omega$ and $\nu_\omega$.
\end{Lem}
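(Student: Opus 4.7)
The plan is to apply a measurable selection theorem to the set-valued map $\varphi:\Omega\leadsto\mathscr{P}(X\times Y)$ defined by $\varphi(\omega):=\Gamma_o(\mu_\omega,\nu_\omega)$, where $\mathscr{P}(X\times Y)$ is equipped with the narrow (weak-$*$) topology, under which it is a Polish space. I would first verify that $\varphi$ is nonempty compact-valued, then establish that its graph is measurable, and finally invoke the Kuratowski--Ryll-Nardzewski theorem (or Lemma \ref{sele_infinite}) to produce the desired measurable selection $\omega\mapsto\pi_\omega$.

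For nonemptiness and compactness of each $\varphi(\omega)$: since $\mu_\omega$ and $\nu_\omega$ are Borel probability measures on Polish spaces they are tight, so $\Gamma(\mu_\omega,\nu_\omega)$ is tight and hence narrowly relatively compact by Prokhorov's theorem, while narrow closedness of $\Gamma(\mu_\omega,\nu_\omega)$ follows from continuity of the marginal projections $\pi\mapsto(p_X)_\#\pi$ and $\pi\mapsto(p_Y)_\#\pi$. Because $c$ is continuous and bounded below, the functional $I_c(\pi):=\int_{X\times Y}c\,d\pi$ is narrowly lower semicontinuous, so it attains its minimum on the compact set $\Gamma(\mu_\omega,\nu_\omega)$, and $\varphi(\omega)=\Gamma_o(\mu_\omega,\nu_\omega)$ is a nonempty closed (hence compact) subset.

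For measurability of $\varphi$: fix countable dense sets $\{f_k\}\subset C_b(X)$ and $\{g_k\}\subset C_b(Y)$. Then $\pi\in\Gamma(\mu_\omega,\nu_\omega)$ is equivalent to the countable collection of identities $\int f_k\circ p_X\,d\pi=\int f_k\,d\mu_\omega$ and $\int g_k\circ p_Y\,d\pi=\int g_k\,d\nu_\omega$, each of which is continuous in $\pi$ and measurable in $\omega$; consequently the graph of $\omega\mapsto\Gamma(\mu_\omega,\nu_\omega)$ is a measurable subset of $\Omega\times\mathscr{P}(X\times Y)$. Measurability of $\omega\mapsto\mathcal{C}(\mu_\omega,\nu_\omega):=\inf_{\pi\in\Gamma(\mu_\omega,\nu_\omega)}I_c(\pi)$ follows from the measurable maximum theorem applied to narrowly continuous truncations $c_n:=\min(c,n)$ together with a limit passage. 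Intersecting with the lower semicontinuous constraint $I_c(\pi)\leqslant\mathcal{C}(\mu_\omega,\nu_\omega)$ shows that $\varphi$ itself has a measurable graph, so Kuratowski--Ryll-Nardzewski supplies a measurable selection.

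The main obstacle is that $I_c$ is only narrowly lower semicontinuous rather than continuous, so Lemma \ref{sele_infinite} (which requires a Carath\'eodory objective) cannot be invoked in a single stroke; the truncation-and-limit argument, combined with the countable characterization of marginals above, is the technical core. A cleaner alternative, should one wish to avoid passing to the limit, is to work with the graph-measurable argmin correspondence directly and use von Neumann--Aumann's theorem to obtain a universally measurable selection, which on the completed $\sigma$-algebra gives the required $\omega\mapsto\pi_\omega$.
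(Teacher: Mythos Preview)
The paper does not prove this lemma at all: it is quoted verbatim as \cite[Corollary~5.22]{Villani_book2009} and used as a black box in the proof of Theorem~\ref{thm:P-}. So there is no ``paper's own proof'' to compare against; your proposal is a self-contained sketch of the standard measurable-selection argument that underlies Villani's corollary.

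Your outline is essentially correct and follows the route Villani himself takes (compactness of $\Gamma(\mu_\omega,\nu_\omega)$, lower semicontinuity of the cost functional, then a selection theorem on the argmin correspondence). One small slip: you write ``countable dense sets $\{f_k\}\subset C_b(X)$'', but $C_b(X)$ is not separable in the sup norm when $X$ is noncompact. What you actually need, and what suffices, is a countable family in $C_b(X)$ that \emph{separates} probability measures (for instance bounded Lipschitz functions for a bounded compatible metric); with such a family the marginal constraints are still a countable list of Carath\'eodory conditions and the rest of your argument goes through. Your remark that Lemma~\ref{sele_infinite} does not apply directly because $I_c$ is only lower semicontinuous is well taken, and the truncation $c_n=\min(c,n)$ followed by passage to the limit (or, alternatively, the Aumann--von~Neumann route you mention) is exactly the right fix.
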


In the sequel, we consider the case of $X=Y=M$, a connected and closed manifold of dimension $d$. and $c^{s,t}(x,y):=h(s,t,x,y)$, the fundamental solution from weak KAM theory. 

\begin{defn}[Random Lax-Oleinik operator]\label{defn:ROL}
	For any function $\phi:M\to [-\infty,+\infty]$, $t>0$, $\mu\in\mathscr P(M)$, we define
	\begin{align*}
		P_t^-\phi(\mu)&:=\inf_{\nu\in\mathscr P(M)}\{\phi(\nu)+C^{0,t}(\nu,\mu)\},\\
		P_t^+\phi(\mu)&:=\sup_{\nu\in\mathscr P(M)}\{\phi(\nu)-C^{0,t}(\mu,\nu)\}.
	\end{align*}
\end{defn}

\begin{Pro}[\protect{\cite[Theorem 7.21]{Villani_book2009}}]
Let $s,t\in\R$ and $\mu,\nu\in\mathscr P(M)$ such that $C^{s,t}(\mu,\nu)$ is finite. Then
\begin{align*}
		C^{s,t}(\mu,\nu)&=\min_{\pi\in\Gamma(\mu,\nu)}\int_{M^{2}}h(s,t,x,y)\,d\pi=\min_{\xi\in L_{\mu,\nu}^{s,t}}\int_\Omega\int_s^tL(\xi(r,\omega),\dot\xi(r,\omega))\,drd\mathbb P,
\end{align*}where $L_{\mu,\nu}^{s,t}:=\{\xi\in L^0(\Omega;AC([s,t];M)):\mathrm{law}\,(\xi(s,\cdot))=\mu,\mathrm{law}\,(\xi(t,\cdot))=\nu\}$.
\end{Pro}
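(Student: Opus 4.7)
The plan is to prove the two equalities separately. For the first equality, $C^{s,t}(\mu,\nu) = \min_{\pi\in\Gamma(\mu,\nu)}\int_{M^2} h(s,t,x,y)\,d\pi$, the infimum is really attained: $\Gamma(\mu,\nu)$ is tight (since $\mu$ and $\nu$ are, and marginals control tightness) and closed under weak-$\ast$ convergence, hence compact by Prokhorov; meanwhile $h(s,t,\cdot,\cdot)$ is continuous on $M\times M$ under the Tonelli conditions by standard results on the fundamental solution, with a uniform lower bound coming from (L2). Lower semicontinuity of $\pi\mapsto\int h\,d\pi$ on $\mathscr P(M^2)$ then yields existence of a minimizer, and the identification with $\inf_{\law(x)=\mu,\law(y)=\nu}\mathbb E(h(s,t,x,y))$ is immediate by taking $(x,y):=\pi$ on $(\Omega,\mathscr F,\mathbb P)=(M^2,\mathcal B,\pi)$ in one direction, and $\pi:=\law(x,y)$ in the other.

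For the second equality, I would treat the two inequalities separately. The easy direction is ``$\leqslant$ from the right'': for any $\xi\in L^{s,t}_{\mu,\nu}$, the curve $\xi(\cdot,\omega)\in\Gamma^{s,t}_{\xi(s,\omega),\xi(t,\omega)}$, so by definition of $h$,
\begin{equation*}
\int_s^t L(\xi(r,\omega),\dot\xi(r,\omega))\,dr\geqslant h(s,t,\xi(s,\omega),\xi(t,\omega)).
\end{equation*}
Integrating in $\omega$ and noting that $(\xi(s,\cdot),\xi(t,\cdot))_\#\mathbb P\in\Gamma(\mu,\nu)$ gives $\int_\Omega\int_s^t L\,dr\,d\mathbb P\geqslant C^{s,t}(\mu,\nu)$; taking infimum over $\xi$ closes one side.

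The reverse inequality is the substantive step, and requires constructing a $\xi\in L^{s,t}_{\mu,\nu}$ from an optimal plan $\pi\in\Gamma_o(\mu,\nu)$ whose curves attain $h$. Tonelli's theorem guarantees that for every $(x,y)\in M\times M$ the set $\mathcal M(x,y):=\arg\min_{\gamma\in\Gamma^{s,t}_{x,y}}\int_s^t L(\gamma,\dot\gamma)\,dr$ is a nonempty compact subset of $AC([s,t];M)$ equipped with the $C^0$ topology, and one checks that $(x,y)\leadsto\mathcal M(x,y)$ is upper hemicontinuous, hence weakly measurable. Applying the Kuratowski--Ryll-Nardzewski selection theorem (or Lemma \ref{sele_infinite} with $F\equiv0$ and $\varphi=\mathcal M$) yields a Borel measurable selector $(x,y)\mapsto\gamma_{x,y}\in\mathcal M(x,y)$. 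Taking $(\Omega,\mathscr F,\mathbb P):=(M^2,\mathcal B(M^2),\pi)$ and setting $\xi(r,\omega):=\gamma_{\omega}(r)$, one obtains $\xi\in L^{s,t}_{\mu,\nu}$ satisfying
\begin{equation*}
\int_\Omega\int_s^t L(\xi(r,\omega),\dot\xi(r,\omega))\,dr\,d\mathbb P=\int_{M^2}h(s,t,x,y)\,d\pi=C^{s,t}(\mu,\nu),
\end{equation*}
which closes the circle and simultaneously exhibits an optimizer.

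The main obstacle is the measurable selection of minimizing curves, i.e.\ checking that $\mathcal M$ has the right regularity for a Borel selector to exist. The compactness of each $\mathcal M(x,y)$ is a direct consequence of Tonelli's superlinearity and Arzel\`a--Ascoli after reparametrizing by $L$-action; upper hemicontinuity follows from the classical stability of minimizers under $C^0$-convergence of endpoints (using continuity of $h$ together with superlinearity to prevent the escape of energy). Once these are in hand, Lemma \ref{sele_infinite} applies and delivers $\gamma_{x,y}$ jointly measurably in $(x,y)$. (If $\pi$ is instead built sequentially from a near-optimal sequence $\xi_n$, one can alternatively use tightness of $\law(\xi_n(\cdot,\omega))$ and pass to a limit, but the selection argument is cleaner.)
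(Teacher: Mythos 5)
The paper does not prove this proposition; it is imported verbatim as a citation to Villani's Theorem 7.21, so there is no in-text argument to compare against. Your sketch is a correct reconstruction of the standard argument (Prokhorov compactness plus lower semicontinuity for the first equality; the pointwise inequality $\int_s^t L\,dr\geqslant h$ for one direction of the second, and a measurable selection of Tonelli minimizers to turn an optimal plan into a random curve for the other), which is precisely the content of the cited result.
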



\begin{The}\label{thm:P-}
Suppose $\phi\in C(M)$ and $t>0$. Then, for any $\nu\in\mathscr P(M)$ the operator $P_t^-\phi(\nu)$ is finite-valued. There exist $\mu\in\mathscr P(M)$ and $\xi\in L_{\mu,\nu}^{0,t}$ such that
\begin{align*}
	P_t^-\phi(\nu)&=\phi(\mu)+C^{0,t}(\mu,\nu)\\
		&=\int_\Omega\phi(\xi(0,\omega))+\int_0^tL(\xi(s,\omega),\dot\xi(s,\omega))\,dsd\mathbb P\\
		&=\int_0^t\int_{TM}\phi(x)+L(x,v)\,d\tilde\mu_sds,
\end{align*}
where $\tilde\mu_s:=\law(\xi(s,\cdot),\dot\xi(s,\cdot))\in\mathscr P(TM)$ is determined by the associated Euler-Lagrangian flow. 
	
Moreover, $P_t^-\phi(\nu)=T_t^-\phi(\nu)$ for any $\nu\in\mathscr P(M)$, i.e., for any $\nu\in\mathscr P(M)$, there exists $\mu\in\mathscr P(M)$ solving \eqref{eq:K-} . 
\end{The}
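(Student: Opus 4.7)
I would prove the theorem in three stages. \emph{First,} finiteness and existence of a minimizer. Since $M$ is compact and $\phi \in C(M)$, both $\phi$ and $(x,y) \mapsto h(0,t,x,y)$ are continuous and bounded on their respective compact domains, so $|P_t^-\phi(\nu)|$ is bounded by $\|\phi\|_\infty + \sup_{M^2} h(0,t,\cdot,\cdot)$; in particular the infimum is finite. The functional $\mu \mapsto \phi(\mu)$ is narrowly continuous on $\mathscr{P}(M)$ (bounded continuous integrand), while $\mu \mapsto C^{0,t}(\mu,\nu)$ is narrowly lower semicontinuous by the standard stability of optimal plans against continuous cost. Since $\mathscr{P}(M)$ is narrowly compact by compactness of $M$, the direct method yields some $\mu \in \mathscr{P}(M)$ with $P_t^-\phi(\nu) = \phi(\mu) + C^{0,t}(\mu,\nu)$.

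\emph{Second,} I would realise the minimum by a random Euler-Lagrange curve. Choose $\pi \in \Gamma_o(\mu,\nu)$. For every $(x,y) \in M^2$, Tonelli's theorem guarantees that the set $\mathscr{M}(x,y)$ of action-minimizing curves in $\Gamma_{x,y}^{0,t}$ realizing $h(0,t,x,y)$ is nonempty and compact in the $C^2$-topology, and the multifunction $(x,y) \rightsquigarrow \mathscr{M}(x,y)$ is upper semicontinuous. A measurable selection (Lemma \ref{sele_infinite}, or equivalently a Borel choice of initial velocity from the compact set of admissible initial velocities followed by propagation under the Euler-Lagrange flow) produces a Borel map $\Psi\colon M^2 \to AC([0,t]; M)$ with $\Psi(x,y) \in \mathscr{M}(x,y)$. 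On the probability space $(\Omega,\mathscr{F},\mathbb{P})$ recalled after Definition \ref{defn:ROL}, realise random variables $(X,Y)$ with $\mathrm{law}(X,Y) = \pi$ and set $\xi(s,\omega) := \Psi(X(\omega),Y(\omega))(s)$. Then $\xi \in L_{\mu,\nu}^{0,t}$, the identities $\phi(\mu) = \int_\Omega \phi(\xi(0,\cdot))\,d\mathbb{P}$ and $C^{0,t}(\mu,\nu) = \int_\Omega\int_0^t L(\xi,\dot\xi)\,ds\,d\mathbb{P}$ are immediate from the definitions, and Fubini reorganises the double integral into the $\int_0^t\int_{TM}(\cdot)\,d\tilde\mu_s\,ds$ form with $\tilde\mu_s := \mathrm{law}(\xi(s,\cdot),\dot\xi(s,\cdot))$.

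\emph{Third,} the identification $P_t^-\phi(\nu) = T_t^-\phi(\nu)$. The bound $T_t^-\phi(\nu) \leqslant P_t^-\phi(\nu)$ is free: for any competitor $\mu'$ and any $\pi' \in \Gamma(\mu',\nu)$, integrating the pointwise inequality $T_t^-\phi(y) \leqslant \phi(x) + h(0,t,x,y)$ against $\pi'$ and then taking infimum over $\mu'$ yields it (this is the core computation behind Theorem \ref{thm:K+-}). For the reverse, I use measurable selection once more: for each $y \in M$ the set $\arg\min_x[\phi(x) + h(0,t,x,y)]$ is nonempty and compact, so Lemma \ref{sele_infinite} gives a Borel map $S\colon M \to M$ with $S(y) \in \arg\min_x[\phi(x) + h(0,t,x,y)]$ for every $y$. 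Setting $\mu^* := S_\#\nu$ and $\pi^* := (S,\mathrm{id})_\#\nu \in \Gamma(\mu^*,\nu)$ produces
\[
\phi(\mu^*) + C^{0,t}(\mu^*,\nu) \leqslant \int_{M^2}[\phi(x) + h(0,t,x,y)]\,d\pi^* = \int_M T_t^-\phi(y)\,d\nu(y) = T_t^-\phi(\nu),
\]
so $P_t^-\phi(\nu) \leqslant T_t^-\phi(\nu)$, closing the loop.

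\emph{Main obstacle.} The delicate step is the infinite-dimensional measurable selection in the second stage: one must produce a Borel map $M^2 \to AC([0,t]; M)$, not just a scalar one. The cleanest route is to reduce to finite dimensions by selecting Borel the initial velocity $\dot\gamma(0)$ from the compact set of initial velocities of action-minimizing curves from $x$ to $y$ and then apply the Euler-Lagrange flow; this is precisely where the machinery of Lemmas \ref{sele_infinite} and \ref{sele_villani} enters essentially.
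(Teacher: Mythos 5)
Your proposal is correct and reaches the conclusion by essentially the same route as the paper's proof; the two measurable-selection steps (first an $\arg\min$ selection to build $\mu$ from $\nu$, then a selection of minimizing curves via Lemmas \ref{sele_infinite} and \ref{sele_villani}) and the trivial integrated inequality $T_t^-\phi(\nu)\leqslant P_t^-\phi(\nu)$ are exactly the ingredients the paper uses. The one genuine structural difference is that you open with a direct-method argument (narrow compactness of $\mathscr P(M)$, lower semicontinuity of $C^{0,t}(\cdot,\nu)$, continuity of $\phi(\cdot)$) to establish finiteness and existence of a minimizer abstractly, before exhibiting the minimizer concretely in your third stage. The paper skips this: it proceeds directly to the measurable selection $y(\omega)\mapsto x(\omega)\in\arg\min_x\{\phi(x)+h(0,t,x,y(\omega))\}$, sets $\mu:=x_\#\mathbb P$, and the resulting chain $T_t^-\phi(\nu)=\int_\Omega\bigl[\phi(x(\omega))+h(0,t,x(\omega),y(\omega))\bigr]d\mathbb P\geqslant\phi(\mu)+C^{0,t}(\mu,\nu)\geqslant P_t^-\phi(\nu)\geqslant T_t^-\phi(\nu)$ simultaneously gives finiteness, identifies $P_t^-$ with $T_t^-$, and shows $\mu$ attains the infimum — so the direct-method step is redundant. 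Your version is no less rigorous, just a little longer; the paper's version is more economical. One small note: the final equality in the statement (the one involving $\int_0^t\int_{TM}\phi(x)+L(x,v)\,d\tilde\mu_s\,ds$) cannot hold verbatim, since $\int_{TM}\phi\,d\tilde\mu_s$ depends on $s$ while $\phi(\mu)$ involves only $\tilde\mu_0$; your "Fubini reorganises" remark glosses this, but so does the paper's own proof, which never actually verifies that line.
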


\begin{proof}
It is worth noting that for arbitrary $x,y\in M$, $T_t^-\phi(y)\leqslant \phi(x)+h(0,t,x,y)$. Thus, given $\nu\in\mathscr P(M)$, for all $\mu\in\mathscr P(M)$ and $\pi\in\Gamma(\mu,\nu)$,
\begin{align*}
	\int_{M}T_t^-\phi(y)\,d\nu=\int_{M\times M}T_t^-\phi(y)\,d\pi\leqslant\int_{M\times M}\phi(x)+h(0,t,x,y)\,d\pi,
\end{align*}
which means
\begin{equation}\label{leq_p}
	T_t^-\phi(\nu)=\int_{M}T_t^-\phi(y)\,d\nu\leqslant\inf_{\mu\in\mathscr P(M)}\inf_{\pi\in\Gamma(\mu,\nu)}\int_{M\times M}\phi(x)+h(0,t,x,y)\,d\pi=P_t^-\phi(\nu).
\end{equation}
On the other hand, we can find a $M$-valued random variable $y(\omega)\in L^0(\Omega;M)$ of  $(\Omega,\mathscr F,\mathbb P)$ with $\mathrm{law\,}(y)=\nu$. Recall that, due to a priori estimates on the minimizer (for example, see \cite[Lemma 3.1]{CCJWY2020}), the marginal function can be rewritten as
\begin{align*}
	T_t^-\phi(y(\omega))=\min_{x\in B(y(\omega),\lambda t)}\{\phi(x)+h(0,t,x,y(\omega))\},\qquad\forall\omega\in\Omega,
\end{align*}
where $\lambda>0$ only depends on the Lagrangian and Lipschitz in the large constant of $\phi$. It is easy to check that $\omega\mapsto B(y(\omega),\lambda t)$ is a measurable and  set-valued map with compact value. According to Lemma \ref{sele_infinite}, it admits a measurable selection
\begin{align*}
	\omega\mapsto x(\omega)\in \mathop{\arg\min}_{x\in B(y(\omega),\lambda t)}\{\phi(x)+h(0,t,x,y(\omega))\}\subset B(y(\omega),\lambda t).
\end{align*}
Therefore, define $\mu:=x_\#\mathbb P$, then $\mu\in\mathscr P(M)$ and
\begin{align*}
	\int_{M}T_t^-\phi(y)\,d\nu &=\int_\Omega T_t^-\phi(y(\omega))\,d\mathbb P\\
		&=\int_\Omega\phi(x(\omega))+h(0,t,x(\omega),y(\omega))\,d\mathbb P\\
		&=\int_{M\times M}\phi(x)+h(0,t,x,y)\,d(x,y)_\#\mathbb P\geqslant\phi(\mu)+C^{0,t}(\mu,\nu).
\end{align*}

Since $\omega\mapsto (x(\omega),y(\omega))$ is measurable, invoking Lemma \ref{sele_villani} and the existence of random curves, we can find some $\xi(\omega)\in\Gamma_{x(\omega),y(\omega)}^{0,t}$, which satisfies
\begin{align*}
	T_t^-\phi(y(\omega))=\phi(x(\omega))+\int_0^tL(\xi(s,\omega),\dot\xi(s,\omega))\,ds
\end{align*}
and $\xi\in L^0(\Omega;\Lip([0,t];M))$.
As a consequence we obtain
\begin{align*}
	\int_{\R^d}T_t^-\phi\,d\nu\geqslant\phi(\mu)+C^{0,t}(\mu,\nu)\geqslant P_t^-\phi(\nu).
\end{align*}
Together with \eqref{leq_p}, 
\begin{align*}
	\int_{\R^d}T_t^-\phi\,d\nu=\phi(\mu)+C^{0,t}(\mu,\nu)=P_t^-\phi(\nu)=\int_\Omega\phi(\xi(0,\omega))+\int_0^tL(\xi(s,\omega),\dot\xi(s,\omega))\,dsd\mathbb P.
\end{align*}

Finally we remark that if $L(x,\cdot)$ is strictly convex, then $\xi(\cdot,\omega)\in C^2([0,t];M)$ for any $\omega\in\Omega$, and $(\xi(\cdot,\omega),\dot\xi(\cdot,\omega))\in TM$ solves the Euler-Lagrange equation. 
\end{proof}

An similar reasoning of the proof of Theorem \ref{thm:P-} leads to 

\begin{The}
Suppose $\phi\in C(M)$, $t>0$. Then, for any $\mu\in\mathscr P(M)$, $P_t^+\phi(\mu)$ is finite-valued and there exists $\nu\in\mathscr P(M)$ and $\xi\in L_{\mu,\nu}^{0,t}$ such that
\begin{align*}
	P_t^+\phi(\nu)&=\phi(\nu)-C^{0,t}(\mu,\nu)\\
		&=\int_\Omega\phi(\xi(t,\omega))-\int_0^tL(\xi(s,\omega),\dot\xi(s,\omega))\,dsd\mathbb P\\
		&=\int_0^t\int_{TM}\phi(x)-L(x,v)\,d\tilde\nu_sds,
\end{align*}
where $\tilde\nu_s:=\law(\xi(s,\cdot),\dot\xi(s,\cdot))\in\mathscr P(TM)$ is determined by the associated Euler-Lagrangian flow. 

Moreover, $P_t^+\phi(\mu)=T_t^+\phi(\mu)$ for any $\mu\in\mathscr P(M)$, i.e., for any $\mu\in\mathscr P(M)$ there exists $\nu\in\mathscr P(M)$ solving \eqref{eq:K+}. 
\end{The}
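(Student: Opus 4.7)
The plan is to mirror the proof of Theorem \ref{thm:P-} with the roles of source and target marginals exchanged and the inequalities reversed. First I would record the easy direction: by definition of $T_t^+\phi$, for any $\nu\in\mathscr P(M)$ and any $\pi\in\Gamma(\mu,\nu)$ we have $T_t^+\phi(x)\geqslant\phi(y)-h(0,t,x,y)$ pointwise on $M\times M$. Integrating against $\pi$ and taking the supremum over $\nu$ yields
\begin{align*}
T_t^+\phi(\mu)=\int_M T_t^+\phi\,d\mu\geqslant\sup_{\nu\in\mathscr P(M)}\{\phi(\nu)-C^{0,t}(\mu,\nu)\}=P_t^+\phi(\mu),
\end{align*}
and continuity of $\phi$ on the compact $M$ shows this quantity is finite.

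Next I would realize $\mu$ as the law of an $M$-valued random variable $x(\omega)$ on some probability space $(\Omega,\mathscr F,\mathbb P)$, as in Theorem \ref{thm:P-}. Using the positive analogue of the a priori estimate on extremals from \cite{CCJWY2020}, the supremum in $T_t^+\phi(x(\omega))$ is attained on a compact ball $B(x(\omega),\lambda t)$, where $\lambda$ depends only on the Lagrangian and on $\|\phi\|_\infty$ (or a Lipschitz-in-the-large constant). The set-valued map $\omega\mapsto B(x(\omega),\lambda t)$ is weakly measurable with compact values, so Lemma \ref{sele_infinite} supplies a measurable selection $\omega\mapsto y(\omega)$ with
\begin{align*}
T_t^+\phi(x(\omega))=\phi(y(\omega))-h(0,t,x(\omega),y(\omega)).
\end{align*}
Setting $\nu:=y_\#\mathbb P\in\mathscr P(M)$ and using the random-variable formulation of the dynamical cost would give
\begin{align*}
T_t^+\phi(\mu)=\int_\Omega T_t^+\phi(x(\omega))\,d\mathbb P=\int_\Omega\phi(y(\omega))-h(0,t,x(\omega),y(\omega))\,d\mathbb P\leqslant\phi(\nu)-C^{0,t}(\mu,\nu)\leqslant P_t^+\phi(\mu).
\end{align*}
Combined with the first step, this forces equality throughout, establishing both $P_t^+\phi(\mu)=T_t^+\phi(\mu)$ and the existence of an optimizing marginal $\nu$.

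For the curve representation, I would apply Lemma \ref{sele_villani} together with the existence of random minimizing curves to obtain $\xi\in L^0(\Omega;\Lip([0,t];M))$ satisfying $\xi(0,\omega)=x(\omega)$, $\xi(t,\omega)=y(\omega)$, and
\begin{align*}
h(0,t,x(\omega),y(\omega))=\int_0^t L(\xi(s,\omega),\dot\xi(s,\omega))\,ds.
\end{align*}
Setting $\tilde\nu_s:=\law(\xi(s,\cdot),\dot\xi(s,\cdot))$ and applying Fubini then yields the three equalities in the statement; strict convexity of $L(x,\cdot)$ in (L1) ensures each $\xi(\cdot,\omega)$ is in fact $C^2$ and solves the Euler-Lagrange equation, so that $\tilde\nu_s$ is transported by the Euler-Lagrange flow. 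The final assertion that some $\nu$ solves \eqref{eq:K+} then follows from Theorem \ref{thm:K+-} applied to the admissible pair $(T_t^+\phi,\phi)$.

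The most delicate step, as in Theorem \ref{thm:P-}, is verifying the measurability hypotheses required to invoke Lemma \ref{sele_infinite} and Lemma \ref{sele_villani}: namely, the weak measurability of $\omega\mapsto B(x(\omega),\lambda t)$ with compact values, and the lift of the measurable pair $(x(\omega),y(\omega))$ to a measurable family of action-minimizing curves. Both are already handled in the negative case by the authors' arguments, and transfer without change to $T_t^+\phi$ since only the sign of the cost enters the selection step.
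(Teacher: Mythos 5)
Your proof is correct and is precisely the ``similar reasoning'' that the paper itself invokes by referring back to Theorem \ref{thm:P-}, with the roles of the marginals interchanged and the inequalities reversed. The only minor inefficiency is the closing appeal to Theorem \ref{thm:K+-}: the equality $T_t^+\phi(\mu)=\phi(\nu)-C^{0,t}(\mu,\nu)=\sup_{\pi\in\Gamma(\mu,\nu)}\int(\phi(y)-h(0,t,x,y))\,d\pi$ that you established already \emph{is} the statement \eqref{eq:K+} for the constructed $\nu$, so no further deduction is needed.
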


\appendix

\section{Proofs of some Lemmata}

\begin{proof}[Proof of Lemma \ref{lem:D^+}]
By some basic results from the theory of semiconcave functions, for any $s\in S$, $x\in\R^d$ and $p\in D^+_xF(s,x)$, define $\phi(s,x,\cdot)=F(t,x)+\langle p,\cdot-x\rangle+\frac C2|\cdot-x|^2$. Then, $\phi(s,x,\cdot)$ touches $F(s,\cdot)$ from above at $x$ and $p=D\phi(s,x,x)$. Thus
\begin{align*}
	F(s,x)=\inf_{x\in\R^d}\phi(s,x,x),\qquad u(x)=\inf_{s\in S,x\in\R^d}\phi(s,x,x).
\end{align*}
Then, our conclusion is a direct consequence of the property of a standard marginal function (see, for instance, \cite[Theorem 3.4.4]{Cannarsa_Sinestrari_book}).
\end{proof}

\begin{proof}[Proof of Lemma \ref{lem:proj}]
Let $x_0\in p_x(\supp\,(\pi))$, then there exists $y_0\in Y$, $(x_0,y_0)\in \supp\,(\pi)$, i.e. for any neighborhood $U_{x_0,y_0}$ of $(x_0,y_0)$ in $X\times Y$, $\pi\,(U_{x_0,y_0})>0$. Let $U_{x_0}$ be any neighborhood of $x_0\in X$. Since $U_{x_0}\times Y$ is also a neighborhood of $(x_0,y_0)$, we have
\begin{equation}\label{ux_0}
	(p_x)_\#\pi\,(U_{x_0})=\pi\,(p_x^{-1}(U_{x_0}))=\pi\,(U_{x_0}\times Y)>0,
\end{equation}
which implies $x_0\in\supp((p_x)_\#\pi)$. 

Now, we turn to the proof of (2). We only need to prove the inclusion $\supp((p_x)_\#\pi)\subset p_x(\supp(\pi))$. Let $x_0\in \supp((p_x)_\#\pi)$, then \eqref{ux_0} holds true for any neighborhood $U_{x_0}$ of $x_0\in X$. Thus, it suffices to prove that there exists $y_0\in Y$ such that $\pi\,(U_{x_0,y_0})>0$ for any $U_{x_0,y_0}\subset X\times Y$. We prove by contradiction. Assume that for any $y\in Y$ there exists an open set $V_{x_0,y}$ containing $(x_0,y)$ such that $\pi\,(V_{x_0,y})=0$. More precisely, for each $y$, there exist $R_y^X, R_y^Y>0$ with $B(x_0,R_y^X)\times B(y,R_y^Y)\subset V_{x_0,y}$, and $\pi\,(B(x_0,R_y^X)\times B(y,R_y^Y))=0$. Due to the compactness of $Y$, the open cover $\{B(x_0,R_y^X)\times B(y,R_y^Y)\}_{y\in Y}$ of $\{x_0\}\times Y$ admits a finite subcover $\{B(x_0,R_{y_i}^X)\times B(y_i,R_{y_i}^Y)\}_{i=1}^N$.
Set $R_{x_0}:=\min_{1\leqslant i\leqslant N}\{R_{y_i}^X,R_{y_i}^Y\}>0$. Then
\begin{align*}
	\{x_0\}\times Y\subset B(x_0,R_{x_0})\times Y\subset\bigcup_{i=1}^N
        \left\{B(x_0,R_{y_i}^X)\times B(y_i,R_{y_i}^Y)\right\}.
\end{align*}
It follows that
\begin{align*}
	\pi\,(B(x_0,R_{x_0})\times Y)&\leqslant \pi\left(\bigcup_{i=1}^N
        \left\{B(x_0,R_{y_i}^X)\times B(y_i,R_{y_i}^Y)\right\}\right)\\
        &\leqslant \sum_{i=1}^N\pi\left(B(x_0,R_{y_i}^X)\times B(y_i,R_{y_i}^Y)\right)=0.
\end{align*}
Therefore, $B(x_0,R_{x_0})$, as a neighborhood of $x_0$, does not satisfy \eqref{ux_0}. This leads to a contradiction. 
\end{proof}

\bibliographystyle{plain}
\bibliography{mybib}

\end{document}